\title{Homaloidal Polynomials and Gaussian Models of Maximum Likelihood Degree One}
\date{October 15, 2024}
\begin{document}

\begin{abstract}
We study the Gaussian statistical models whose log-likelihood function has a unique complex critical point, i.e., has maximum likelihood degree one. We exploit the connection developed by Améndola et. al. between the models having maximum likelihood degree one and homaloidal polynomials. 
We study the spanning tree generating function of a graph and show this polynomial is homaloidal when the graph is chordal. When the graph is a cycle on $n$ vertices, $n \geq 4$, we prove the polynomial is not homaloidal, and show that the maximum likelihood degree of the resulting model is the $(n-1)$st Eulerian number. These results support our conjecture that the spanning tree generating function is a homaloidal polynomial if and only if the graph is chordal.
We also provide an algebraic formulation for the defining equations of these models. Using existing results, we provide a computational study on constructing new families of homaloidal polynomials. In the end, we analyze the symmetric determinantal representation of such polynomials and provide an upper bound on the size of the matrices involved. 
\end{abstract}

\maketitle

\textbf{Keywords:} homaloidal polynomial, maximum likelihood degree, multivariate normal distribution, Gaussian graphical model, symmetric determinantal representation.

\section{Introduction}

Let $\mathcal{M}$ be a semi-algebraic subset of $\PD_m$, the cone of real positive-definite $m\times m$ matrices. In this paper, we consider $\mathcal{M}$ as an affine-linear Gaussian concentration model, i.e. $\mathcal{M}$ is the set of concentration (inverse covariance) matrices corresponding to a subset of mean-zero Gaussian distributions such that these matrices lie on an affine subspace of $m\times m$ symmetric matrices. For an independent identically distributed sample $Y_1,\ldots,Y_n\in \mathbb{R}^m$, the \textit{Gaussian log-likelihood function} on $\mathcal{M}$ is defined as:
\begin{align*}
    \ell_S : \mathcal{M} \to \mathbb{R}, \qquad K \mapsto \log \det(K) - \tr(KS),
\end{align*}
where $S$ is the sample covariance matrix $S = \frac{1}{n} \sum_{i=1}^n Y_i Y_i^T$. 
A \textit{maximum likelihood estimate} (\textit{MLE}) is any concentration matrix in $\mathcal{M}$ that globally maximizes $\ell_S$. It is an estimate for the concentration matrix of the distribution from which the sample $Y_1, \ldots, Y_n$ arises. 
The \textit{maximum likelihood degree (ML degree)} of a Gaussian model $\mathcal{M}$ is the number of complex critical points of the log-likelihood function over $\overline{\mathcal{M}}$, the Zariski closure of the complexification of $\mathcal{M}$, given generic data. 
Thus, the ML degree is an algebraic measure of the complexity of the MLE problem. 
An open problem is to determine models $\mathcal{M}$ with ML degree one. For instance, in the case of Gaussian graphical models it is known that the ML degree is one when the corresponding graph is chordal~\cite{sturmfelsUhler}. Recent progress towards solving this problem was made in \cite{améndola2023differential}, where the authors start by extending the definition of the Gaussian log-likelihood function to a coordinate-free context using projective varieties:

\begin{definition}{\cite[Definition 2.1]{améndola2023differential}} \label{def: coordinate-free ML degree}
Let $\mathcal{L}$ be a finite-dimensional $\mathbb{C}$-vector space and let $X\subseteq \mathbb{P} (\mathcal{L})$ be a projective variety. Fix a homogeneous polynomial $P$ on $\mathcal{L}$. For a vector $u$ in the dual space $\mathcal{L}^\ast$, and $s\in \mathcal{L}\setminus V(P)$, the \textit{Gaussian log-likelihood} is 
\begin{align*}
    \ell_{P,u}(s) \coloneqq \log(P(s)) - u(s).
\end{align*}
The \textit{(Gaussian) ML degree} of $X$ with respect to $P$, denoted by $\mld_P(X)$, is the number of critical points of $\ell_{P,u}$ over the domain $X^{{\rm sm}}\setminus V(P)$ for general $u$, where $X^{{\rm sm}}$ is the smooth locus of $X$.
\end{definition}

This allows for a  proof of the equivalence between the usual and coordinate-free ML degrees by considering $\mathcal{L}$ to be the space of symmetric matrices.

\begin{prop}{\cite[Proposition 2.3]{améndola2023differential}} 
    \label{prop: homaloidal to det}
    Let $\det$ denote the determinant function on $\sym{m}$, the set of complex symmetric $m\times m$ matrices.
    \begin{enumerate}[i.]
        \item Let $X$ be a subvariety of $\sym{m}$. Then $\mld_{\det}(X)$ is the usual Gaussian ML degree of $X$.
        \item Conversely, let $\mathcal{L}$ be a finite-dimensional $\mathbb{C}$-vector space, $X\subseteq \mathcal{L}$ a variety, and $P$ a polynomial on $\mathcal{L}$. Then there exists an affine-linear embedding $\mathcal{A} : \mathcal{L} \to \sym{m}$ such that $\mld_P(X)= \mld_{\det} (\mathcal{A}(X))$.
    \end{enumerate}
\end{prop}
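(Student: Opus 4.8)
The plan is to unwind the coordinate-free definition of the ML degree (Definition~\ref{def: coordinate-free ML degree}) and reduce both parts to the same computation with $\det$ on $\sym{m}$. For part (i) I would identify the dual space $\sym{m}^\ast$ with $\sym{m}$ itself through the nondegenerate trace pairing $\langle A, B\rangle \coloneqq \tr(AB)$. Under this identification every $u\in\sym{m}^\ast$ is of the form $u(K) = \tr(SK)$ for a unique $S\in\sym{m}$, so $\ell_{\det,u}(K) = \log\det(K) - \tr(SK)$ is literally the Gaussian log-likelihood $\ell_S$ of the introduction, and its critical points on $X^{{\rm sm}}\setminus V(\det)$ are exactly the ones counted by the usual Gaussian ML degree of $X$. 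It then remains to match ``$u$ general'' with ``generic data'': the sample covariances $S = \frac1n\sum_i Y_iY_i^T$ form a Zariski-dense subset of $\sym{m}$ (for $n\ge m$ they already exhaust the positive semidefinite cone), and since the number of critical points is constant on a dense constructible subset of the parameter space, the generic count over sample covariances coincides with the generic count over all of $\sym{m}^\ast$; this gives (i).

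For part (ii) the essential input is an \emph{exact, affine-linear, symmetric determinantal representation} of $P$. Choosing a basis to identify $\mathcal{L}$ with $\mathbb{C}^n$ and writing $P = P(x_1,\ldots,x_n)$, the known results on symmetric determinantal representations of polynomials over $\mathbb{C}$ (in the spirit of Quarez, applied to $P$ directly or to its homogenization followed by a specialization) furnish symmetric matrices $A_0,A_1,\ldots,A_n$ of some size $N$ with $\det\bigl(A_0 + \textstyle\sum_i x_iA_i\bigr) = c\,P(x)$ for a nonzero scalar $c$. The resulting affine-linear map $\mathcal{A}_0\colon x\mapsto A_0 + \sum_i x_iA_i$ need not be injective, but every vector in the kernel of its linear part is a direction along which $P$ is constant; I would repair this by passing to the direct sum of $\mathcal{A}_0$ with $k\coloneqq\dim\ker(\mathcal{A}_0)_{{\rm lin}}$ blocks $\left(\begin{smallmatrix} 1+\mathrm{i}\lambda_j & \lambda_j \\ \lambda_j & 1-\mathrm{i}\lambda_j\end{smallmatrix}\right)$, each of determinant $1$, with the linear forms $\lambda_j$ chosen so that the combined linear map becomes injective. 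This produces an affine-linear \emph{embedding} $\mathcal{A}\colon\mathcal{L}\to\sym{m}$, $m = N + 2k$, with $\det\circ\,\mathcal{A} = c\,P$.

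With such an $\mathcal{A}$ at hand, equality of ML degrees follows by a formal pullback. Writing $\mathcal{A}(s) = A_0 + \phi(s)$ with $\phi$ linear and injective, for $u\in\sym{m}^\ast$ one has $\ell_{\det,u}(\mathcal{A}(s)) = \log c + \log P(s) - u(A_0) - (\phi^\ast u)(s)$, which differs from $\ell_{P,\,\phi^\ast u}(s)$ only by an additive constant; injectivity of $\phi$ makes $\phi^\ast$ surjective, so $\phi^\ast u$ is a general element of $\mathcal{L}^\ast$ whenever $u$ is general. Since $\mathcal{A}$ restricts to an isomorphism of varieties $X\xrightarrow{\ \sim\ }\mathcal{A}(X)$ sending $X^{{\rm sm}}$ onto $\mathcal{A}(X)^{{\rm sm}}$ and $X\cap V(P)$ onto $\mathcal{A}(X)\cap V(\det)$ (because $\det\circ\,\mathcal{A} = cP$ with $c\ne 0$), it sets up a bijection, respecting multiplicities, between the two sets of critical points, and hence $\mld_P(X) = \mld_{\det}(\mathcal{A}(X))$.

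I expect the crux to be part (ii), and within it, obtaining the determinantal representation in the exact form $\det\circ\,\mathcal{A} = c\,P$: any spurious polynomial factor sitting next to $P$ would change the critical equations and therefore the ML degree, so it is crucial to represent $P$ itself, not $P$ times extra linear forms, and to do so with a symmetric (rather than merely square) matrix. The remaining steps — enlarging the matrix to turn $\mathcal{A}$ into an embedding without disturbing $\det\circ\,\mathcal{A}$, and verifying that generality of $u$ passes to generality of $\phi^\ast u$ — are routine once the exact symmetric representation is in place.
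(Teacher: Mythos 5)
This proposition is quoted from Am\'endola et al.\ and the paper does not reprove it, so there is no internal proof to compare against; judged on its own, your argument is correct and follows the same strategy as the cited proof: for (i), identify $\sym{m}^\ast$ with $\sym{m}$ via the trace pairing so that $\ell_{\det,u}$ becomes the usual Gaussian log-likelihood, and match genericity of $u$ with genericity of the data through Zariski density of the (real, PSD) sample covariances; for (ii), pull the log-likelihood back along an affine-linear map coming from an \emph{exact} symmetric determinantal representation $\det\bigl(A_0+\sum_i x_iA_i\bigr)=P$, which exists by the results the paper itself cites (Helton et al., Stefan--Welters). The one point where you genuinely deviate is the non-injective case: the cited proof (as echoed in the proof of \Cref{lemma:product of homaloidals}) handles linearly dependent $A_1,\ldots,A_n$ by a construction that identifies $\mld_{\det}(\mathcal{A}(X))$ with $\mld_{P^2}(X)$ and then uses $\mld_{P^2}=\mld_{P}$, whereas you keep $\det\circ\mathcal{A}=cP$ on the nose by appending the determinant-one blocks $\begin{pmatrix}1+\mathrm{i}\lambda_j&\lambda_j\\ \lambda_j&1-\mathrm{i}\lambda_j\end{pmatrix}$ and choosing the linear forms $\lambda_j$ to separate the kernel of the linear part; this is legitimate here because $\sym{m}$ consists of \emph{complex} symmetric matrices, and it is arguably cleaner since it avoids the $P^2$ detour. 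The remaining bookkeeping in your write-up --- surjectivity of $\phi^\ast$ carrying generic $u$ to generic $\phi^\ast u$, and the identification of smooth loci and of $V(\det)\cap\mathcal{A}(X)$ with $V(P)\cap X$ under the affine-linear isomorphism $X\cong\mathcal{A}(X)$ --- is handled correctly.
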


In this paper, we are specifically interested in using \cite[Proposition 4.2]{améndola2023differential}, where the authors find a necessary and sufficient condition on $P$ such that $\mld_P(X)$ is one. This condition on $P$ comes from a classical construction in algebraic geometry called \textit{homaloidal} polynomials. 

\begin{definition}
    \label{defn: homaloidal}
    A homogeneous polynomial $P(x_0,x_1,\ldots,x_n)$ is said to be homaloidal if the rational map 
    \[
    (t_0,t_1,\ldots, t_n) \rightarrow \left( \frac{\partial P}{\partial x_0} (t), \frac{\partial P}{\partial x_1} (t), \ldots , \frac{\partial P}{\partial x_n} (t) \right)
    \]
    is birational. A hypersurface is homaloidal if its defining polynomial is homaloidal.
\end{definition}

\begin{prop}{\cite[Proposition 4.2]{améndola2023differential}} 
    \label{prop: homaloidal iff mld 1}
    Let $P$ be a homogeneous polynomial on $\mathcal{L}$. Then $\mld_P(\mathbb{P}(\mathcal{L}))$ is one if and only if $P$ is a homaloidal polynomial.     
\end{prop}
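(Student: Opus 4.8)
The plan is to identify $\mld_P(\mathbb{P}(\mathcal{L}))$ with the number of points in a general fibre of the \emph{polar map} $\nabla_P \colon \mathbb{P}(\mathcal{L}) \dashrightarrow \mathbb{P}(\mathcal{L}^{\ast})$, $[s] \mapsto [\, \partial P/\partial x_0(s) : \cdots : \partial P/\partial x_n(s)\,]$, and then to observe that this number equals $1$ for general $u$ precisely when $\nabla_P$ is birational, i.e.\ when $P$ is homaloidal in the sense of Definition~\ref{defn: homaloidal}. Both implications come out of this single computation, so I would not split the proof into two halves.

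Concretely, first I would write out the critical-point equations. Here $X=\mathbb{P}(\mathcal{L})$ is smooth, so working in the affine cone the domain is $\mathcal{L}\setminus V(P)$, and $s$ is a critical point of $\ell_{P,u}$ if and only if $\nabla P(s)=P(s)\,u$. Contracting with $s$ and using Euler's identity $\langle s,\nabla P(s)\rangle=(\deg P)\,P(s)$ yields $u(s)=\deg P\neq 0$ at every critical point; in particular $\nabla P(s)\neq 0$, so $[\nabla P(s)]$ is a well-defined point of $\mathbb{P}(\mathcal{L}^{\ast})$ and $[\nabla P(s)]=[u]$, i.e.\ $[s]\in\nabla_P^{-1}([u])$. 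The point that makes the count match the projective fibre is that the summand $-u(s)$ rigidifies the overall scaling: if $[s']\in\nabla_P^{-1}([u])$ with $P(s')\neq 0$, write $\nabla P(s')=\mu u$ with $\mu\neq 0$; then, by homogeneity of $\nabla P$ and $P$, there is exactly one scalar $\lambda=\mu/P(s')$ for which $\lambda s'$ is a critical point. Hence the critical points of $\ell_{P,u}$ on $\mathcal{L}\setminus V(P)$ are in bijection with the points of $\nabla_P^{-1}([u])$ lying off $V(P)$.

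Next I would remove the fibre points on $V(P)$ by genericity: the restriction of $\nabla_P$ to $V(P)$ has image contained in the dual variety, which is a proper subvariety of $\mathbb{P}(\mathcal{L}^{\ast})$, so for general $u$ the fibre $\nabla_P^{-1}([u])$ avoids $V(P)$ and (over $\mathbb{C}$) is finite and reduced. Hence $\mld_P(\mathbb{P}(\mathcal{L}))=\#\,\nabla_P^{-1}([u])$ for general $u$, and it remains to compute this cardinality. If $\nabla_P$ is dominant then, since $\mathbb{P}(\mathcal{L})$ and $\mathbb{P}(\mathcal{L}^{\ast})$ have equal dimension, a general fibre consists of $\deg\nabla_P$ points, and this number is $1$ exactly when $\nabla_P$ is birational, i.e.\ when $P$ is homaloidal. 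If $\nabla_P$ is not dominant then a general $[u]$ lies outside the closure of the image, so $\nabla_P^{-1}([u])=\emptyset$ and $\mld_P(\mathbb{P}(\mathcal{L}))=0\neq 1$; moreover a non-dominant map is in particular not birational, so $P$ is not homaloidal. In every case $\mld_P(\mathbb{P}(\mathcal{L}))=1$ if and only if $P$ is homaloidal, as desired.

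I do not expect a single hard step: the main work is careful bookkeeping. The delicate points are (i) that passing between the affine cone $\mathcal{L}$ and $\mathbb{P}(\mathcal{L})$ does not change the count --- this is exactly where the term $-u(s)$ is used, and one could instead phrase everything in terms of the affine map $s\mapsto\nabla P(s)/P(s)$ on $\mathcal{L}\setminus V(P)$, whose fibres literally \emph{are} the critical loci and whose projectivisation is $\nabla_P$; (ii) choosing $u$ general enough that simultaneously $\nabla_P^{-1}([u])$ has the expected reduced cardinality and misses $V(P)$, the latter handled by the dual-variety argument; and (iii) noting that ``birational'' in Definition~\ref{defn: homaloidal} includes dominance, so the non-dominant case genuinely has to be addressed, even though it is immediate.
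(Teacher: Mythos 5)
Your argument is correct: identifying the critical points of $\ell_{P,u}$ on $\mathcal{L}\setminus V(P)$ with the fibre of the polar map over $[u]$ via Euler's relation and the unique rescaling $\lambda=\mu/P(s')$, discarding fibre points on $V(P)$ for generic $u$ by the dual-variety dimension count, and then equating the generic fibre cardinality with $1$ exactly when $\nabla_P$ is birational (treating the non-dominant case separately) is precisely the standard argument for this equivalence. Note that the paper itself states \Cref{prop: homaloidal iff mld 1} without proof, citing \cite{améndola2023differential}, so there is no internal proof to compare against; your write-up is consistent with how the result is used later in the paper (e.g., the affine critical equations in \Cref{lem: ML deg is deg}).
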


A major part of this paper focuses on developing new ways of constructing such polynomials. In \Cref{sec: spanning tree models}, we specifically study the polynomials obtained as the spanning tree generating function of a connected graph $G$. These functions correspond to linear concentration models $\mathcal{M}(G, k)$ that arise from the Laplacian of $G$ after deleting the row and column corresponding to vertex $k$. These models generalize Gaussian graphical models; in statistics, these models are known as H\"usler-Reiss models \cite{MR4630937, röttger2023parametric}. For a fixed graph $G$, each principal submatrix of the Laplacian has the same determinant by the matrix-tree theorem. Thus, each model that arises in this way from $G$ has the same ML degree, and the same determinant function, which is the spanning tree polynomial. In \Cref{thm: chordal implies ML degree one}, we exploit this fact to provide a new proof that $\mathcal{M}(G, k)$ is ML degree one if $G$ is chordal \cite[Proposition 4.4]{hentschel2023statistical}.

\begin{reptheorem}{thm: chordal implies ML degree one}
    If $G$ is chordal, then $\mathcal{M}(G, k)$ has ML degree one for any $k$.
\end{reptheorem}

Thus, if $G$ is a chordal graph, the spanning tree polynomial is homaloidal. Furthermore, we explicitly compute the ML degree of the models for cycle graphs. We find that the ML degree of the models arising from a cycle on $n$ vertices is equal to the $(n-1)$st Eulerian number. 

\begin{reptheorem}{thm: cycle ml degree}
    For all $k$, the ML degree of $\mathcal{M}(C_n, k)$ is equal to the $(n-1)$st Eulerian number, which is the number of subsets of $[n-1]$ with at least 2 elements.
\end{reptheorem}

This provides evidence for the following conjecture:

\begin{repconjecture}{conj: ml deg one iff chordal}
    $\mathcal{M}(G, k)$ has ML degree one if and only if $G$ is chordal.
\end{repconjecture}

Finally, we describe the defining equations of the corresponding covariance model, $\mathcal{M}(G,k)^{-1}$. Regardless of whether $G$ is chordal, the vanishing ideal of the model satisfies the following relation:

\begin{reptheorem}{thm: covariance ideal equations}
    Let $G$ be a graph with $n$ vertices and $G'$ be the subgraph of $G$ obtained after removing the vertex $k$. Let $\Sigma$ be a symmetric matrix of variables indexed by the vertices of $G^\prime$, and $\mathcal{R}_k$ be the set of polynomials in the entries of $\Sigma$ given by
    \[
    \mathcal{R}_k=\{\det \Sigma_{\widehat{i}, \widehat{j}} \mathrel{\big|} l_{ij}=0, i,j\neq k \} \cup \left\{ \sum_i (-1)^{i+j} \det \Sigma_{\widehat{i}, \widehat{j}} \mathrel{\Big|} \, l_{jk}=0\right\},
    \] 
    where 
    $\Sigma_{\widehat{i}, \widehat{j}}$ is the submatrix of $\Sigma$ obtained after removing the $i$th row and $j$th column, and $l_{ij}$ is the $(i,j)$ entry of the Laplacian of $G$.
    Then the vanishing ideal of the model $I(\mathcal{M}(G,k)^{-1})$ is equal to $\langle \mathcal{R}_k \rangle : \det(\Sigma)^\infty$.
\end{reptheorem}

Homaloidal polynomials have been the object of interest in classical algebraic geometry \cite{cremonaTransformations, ciliberto2007homaloidalHA, homaloidal-dets, brunoHomaloidal}, where some work has been done on constructing and classifying them. For instance, in \cite{cremonaTransformations}, the author proved that if $P$ is a homaloidal polynomial in three variables without multiple factors, then the variety of $P$ is either a non-singular conic, or the union of three non-concurrent lines or the union of a conic and its tangent. We study these results in more detail and construct more examples of homaloidal polynomials in \Cref{sec: examples of homaloidals}.

Given a homaloidal polynomial $P$, \cite[Proposition 2.3]{améndola2023differential} proves that expressing the polynomial as the determinant of a matrix with affine-linear entries leads to finding the corresponding affine-linear concentration model with ML degree one. Therefore, after computing examples of homaloidal polynomials in \Cref{sec: examples of homaloidals}, our next step is to find a symmetric determinantal representation for these polynomials in \Cref{sec: SDR for homaloidals}.
\begin{definition}
    Let $\mathbb{F}$ be a field, and $P \in \mathbb{F}[x_1,\ldots,x_n]$ be a polynomial. A \textit{symmetric determinantal representation (SDR)} for $P$ is an expression $P = \det\left(M(x_1,\ldots,x_n)\right)$, where
    \begin{align*}
       M(x_1,\ldots,x_n) = A_0 + \sum_{i=1}^n x_i A_i
    \end{align*}
    such that $A_0,\ldots,A_n$ are symmetric matrices in $\mathbb{F}^{k\times k}$ for some $k\in \mathbb{N}$. The matrix $M(x_1,\ldots,x_n)$ is called an \textit{SDR matrix} of $P$.
\end{definition}
It was first proven in 2006~\cite{HELTON2006105} that any real polynomial has an SDR. In 2021, Stefan and Welters~\cite{SDRshortProof2021} came up with a simpler proof to extend this result to polynomials with coefficients in an arbitrary field $\mathbb{F}$ whose characteristic is not equal to $2$. Motivated by their proof, in \Cref{sec: SDR for homaloidals}, we find an upper bound on the size of the smallest SDR matrix possible for a given polynomial $P\in \mathbb{F}[x_1,\ldots,x_n]$ when $\mathbb{F}\in \{\mathbb{R},\mathbb{C}\}$ by proposing an explicit method for finding the SDR matrix of $P$. In the same section, we briefly discuss whether $M(x_1,\ldots,x_n)$ can be a positive definite matrix for some real values of $x_1,\ldots,x_n$. For the SDR matrix $M(x_1,\ldots,x_n)$ resulting from our construction, we give positive and negative answers to this question in specific cases, and defer the study of other cases to future work.

All code associated with this paper, including specific computations mentioned in the examples, is available in the following GitHub repository
\begin{center}
    \href{https://github.com/shelbycox/Homaloidal}{https://github.com/shelbycox/Homaloidal}
\end{center}

\section{Spanning Tree Models}
\label{sec: spanning tree models}
Let $G = ([n],E)$ be a weighted simple connected graph. The models we consider consist of the principal submatrices of the Laplacian matrix, $L(G)$, viewed as concentration matrices for a mean-zero Gaussian distribution. All graphical concentration models arise in this way, however, this class also includes many new models.
\begin{notation}
    Let $M$ be an $m\times n$ matrix, $I\subseteq [m]$, and $J\subseteq [n]$. Then 
    \begin{enumerate}[i.]
        \item $M_{\widehat{I},\widehat{J}}$ denotes the submatrix of $M$ with rows $I$ and columns $J$ omitted,
        \item $M_{I,J}$ denotes the submatrix of $M$ with only rows $I$ and columns $J$ kept,
        \item $M_{I,\widehat{J}}$ denotes the submatrix of $M$ with only rows $I$ kept, and columns $J$ omitted, and 
        \item $M_{\widehat{I},J}$ denotes the submatrix of $M$ with rows $I$ omitted, and only columns $J$ kept.
    \end{enumerate}
\end{notation}

\begin{definition}
    Let $G=([n],E)$ be a weighted simple graph, and let $k$ be any vertex of $G$. The spanning tree concentration model of $G$ at $k$ is
    \begin{equation*}
        \mathcal{M}(G, k) \coloneqq \left\{ L(G)_{\widehat{k}, \widehat{k}} (\bfx_E) \mid \bfx_E \in \CC^{|E|} \right\}.
    \end{equation*}
    The corresponding covariance model, $\mathcal{M}(G,k)^{-1}$, is defined as follows:
    \begin{align*}
        \mathcal{M}(G,k)^{-1} \coloneqq \overline{\left\{\Sigma \in {\sym{n-1}} \mid \Sigma^{-1} \in \mathcal{M}(G,k) \right\}}.
    \end{align*}
\end{definition}

\begin{example}\label{example:chordal laplacian}
    Let $G$ be the graph on the left of \Cref{fig: first example graph}. The Laplacian of $G$, and the models obtained from deleting its first row/column (resulting in $\mathcal{M}(G, 1)$) or second row/column (resulting in $\mathcal{M}(G, 2)$) are described below. Since $G$ is chordal, \Cref{thm: chordal implies ML degree one} implies that both of these models have ML degree one. The first of the two models, $\mathcal{M}(G, 1)$, is isomorphic to the Gaussian graphical model on the path graph on the right of~\Cref{fig: first example graph}.

    \begin{figure}
        \centering
        \begin{minipage}{.5\textwidth}
            \centering
            \includegraphics[scale=1]{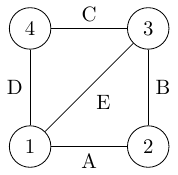}
        \end{minipage}%
        \begin{minipage}{.5\textwidth}
            \centering
            \includegraphics[scale=1]{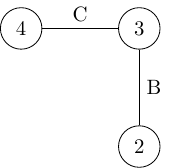}
        \end{minipage}
        \caption{(Left) A chordal graph on four vertices, and (right) the path graph obtained by removing vertex 1 from the graph on the left. The model $\mathcal{M}(G, 1)$ is the Gaussian graphical model corresponding to the graph on the right.}
        \label{fig: first example graph}
    \end{figure}

    \[
    L(G) = 
    \bordermatrix{
        & \textcolor{blue}{\mathbf{1}} & \textcolor{blue}{\mathbf{2}} & \textcolor{blue}{\mathbf{3}} & \textcolor{blue}{\mathbf{4}} \cr
        \textcolor{blue}{\mathbf{1}} & x_A + x_D + x_E & -x_A & -x_E & -x_D  \cr
        \textcolor{blue}{\mathbf{2}} & -x_A & x_A + x_B & -x_B & 0  \cr
        \textcolor{blue}{\mathbf{3}} & -x_E & -x_B & x_B + x_C + x_E & -x_C  \cr
        \textcolor{blue}{\mathbf{4}} & -x_D & 0 & -x_C & x_C + x_D  \cr
    }
    \]

    \begin{align*}
    \mathcal{M}(G, 1) &= \left\{ 
    \bordermatrix{
         & \textcolor{blue}{\mathbf{2}} & \textcolor{blue}{\mathbf{3}} & \textcolor{blue}{\mathbf{4}} \cr
        \textcolor{blue}{\mathbf{2}} & x_A + x_B & -x_B & 0  \cr
        \textcolor{blue}{\mathbf{3}} & -x_B & x_B + x_C + x_E & -x_C  \cr
        \textcolor{blue}{\mathbf{4}} & 0 & -x_C & x_C + x_D  \cr
    } 
    : x_A, \ldots, x_E \in \CC \right\} \\
    &= \left\{ 
    \begin{pmatrix}
        y_1 & y_2 & 0 \\
        y_2 & y_3 & y_4 \\
        0 & y_4 & y_5
    \end{pmatrix} : y_1, \ldots, y_5 \in \CC \right\}.
    \end{align*}

    \begin{align*}
    \mathcal{M}(G, 2) &= \left\{
    \bordermatrix{
        & \textcolor{blue}{\mathbf{1}} & \textcolor{blue}{\mathbf{3}} & \textcolor{blue}{\mathbf{4}} \cr
        \textcolor{blue}{\mathbf{1}} & x_A + x_D + x_E & -x_E & -x_D  \cr
        \textcolor{blue}{\mathbf{3}} & -x_E & x_B + x_C + x_E & -x_C  \cr
        \textcolor{blue}{\mathbf{4}} & -x_D & -x_C & x_C + x_D  \cr
    } : x_A, \ldots, x_E \in \CC
    \right\}
    \\
    &= \left\{
    \begin{pmatrix}
        y_1 & y_2 & y_3 \\
        y_2 & y_4 & y_5 \\
        y_3 & y_5 & y_6
    \end{pmatrix} : \substack{y_1, \ldots, y_6 \in \CC \\ y_3 + y_5 + y_6 = 0}
    \right\}.
    \end{align*}
\end{example}

In \Cref{lemma: our models include grpahical models}, we show that the models considered in this section are generalizations of the celebrated family of Gaussian graphical models.

\begin{prop}
    \label{lemma: our models include grpahical models}
    Let $G$ be a simple weighted graph with vertex set $[n]$, and let $G^\prime$ be the graph obtained from $G$ by adding a new vertex, $0$, and edges connecting $0$ to every other vertex. Then $\mathcal{M}(G', 0)$ is the Gaussian graphical concentration model for $G$.
\end{prop}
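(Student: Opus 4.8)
The plan is to write down the Laplacian $L(G')$ explicitly and observe that deleting the row and column corresponding to the new vertex $0$ returns exactly the matrix that parametrizes the Gaussian graphical model of $G$. Label the new edges $0i$ by fresh weights $x_{0i}$ for $i \in [n]$, and keep the original edge weights $x_e$ for $e \in E(G)$. First I would record that the $(i,i)$ diagonal entry of $L(G')$ is $x_{0i} + \sum_{e \ni i, e \in E(G)} x_e$, that the $(i,j)$ off-diagonal entry for $i,j \in [n]$ is $-x_{ij}$ if $ij \in E(G)$ and $0$ otherwise, and that the entries in the $0$-th row/column involve the $x_{0i}$. Deleting the $0$-th row and column gives $L(G')_{\widehat 0, \widehat 0}$, whose $(i,j)$ entry for $i \neq j$ is $-x_{ij}$ (supported exactly on $E(G)$) and whose $(i,i)$ entry is $x_{0i} + \sum_{e \ni i} x_e$.

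Next I would argue that, as $\bfx$ ranges over $\CC^{|E(G')|} = \CC^{|E(G)| + n}$, the image of this assignment is precisely $\{K \in \sym{n} : K_{ij} = 0 \text{ for all } ij \notin E(G),\ i \neq j\}$, which is by definition the (Zariski closure of the) Gaussian graphical concentration model $\mathcal{M}(G)$. The containment $\subseteq$ is immediate from the computation above since the off-diagonal support is contained in $E(G)$. For $\supseteq$, given any symmetric $K$ with the prescribed zero pattern, set $x_{ij} := -K_{ij}$ for $ij \in E(G)$, and then solve $x_{0i} = K_{ii} - \sum_{e \ni i, e\in E(G)} x_e$ for each $i \in [n]$; this is always solvable since the $x_{0i}$ appear freely and independently on the diagonal. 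Hence the parametrization is surjective onto the full linear space cut out by the graphical zero pattern, which is exactly the graphical model.

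The only subtlety — and the main thing to be careful about rather than a genuine obstacle — is a bookkeeping point about what ``the Gaussian graphical concentration model for $G$'' means as a variety versus as a semialgebraic set: $\mathcal{M}(G', 0)$ is defined with complex parameters and is the full linear space $\{K : K_{ij}=0,\ ij\notin E(G)\}$, which agrees with $\overline{\mathcal{M}(G)}$, the Zariski closure of the usual (positive-definite) graphical model; since ML degree only depends on this closure, the identification is the relevant one. I would state this matching of linear spaces as the content of the proposition and note in passing that the positive-definite cone inside it corresponds to the statistically meaningful parameter region. I do not expect any step here to be hard; the proof is essentially the observation that adding a universal vertex and then deleting it converts the ``reduced Laplacian'' construction into the graphical zero-pattern model, with the new star edges $x_{0i}$ playing the role of free diagonal parameters.
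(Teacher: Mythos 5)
Your proposal is correct and follows essentially the same route as the paper: both compute $L(G')_{\widehat{0},\widehat{0}}$ explicitly, observe that the star edges $x_{0i}$ make the diagonal entries free while the off-diagonal support is exactly $E(G)$, and exhibit the explicit inverse assignment ($x_{ij} = -K_{ij}$, then solve for $x_{0i}$ from the diagonal) to identify the image with the graphical zero-pattern linear space.
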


\begin{proof}
For a given graph $G=([n],E)$, the Gaussian concentration model $\mathcal{L}_G$ is spanned by the matrices of the form $K_{ij}$ for $(i,j)\in E$, where each $K_{ij}$ is an $n \times n$ matrix with $1$ in the $(i,j)$ and $(j,i)$ entries and $0$ elsewhere, and $K_{ii}$, where $(K_{ii})_{jk} = 1$ if and only if $j = k = i$, and 0 otherwise. In other words, $\mathcal{L}_G$ is an $n+|E|$ dimensional subspace of the space of symmetric matrices where the vertex parameters $k_{ii}$ and edge parameters $k_{ij}$ are independent. 

Now, let $L(G')$ be the Laplacian of $G'$ and $L_{\widehat{0}, \widehat{0}}$ be the submatrix obtained by deleting the $0$th row and column of $L(G')$. We know from the properties of Laplacians that along with $L(G')_{i,j}$ being $0$ when $(i,j) \notin E$, the sum of each row of $L(G')$ is also zero. The second property of $L(G')$ is not satisfied by the Gaussian concentration models. However, as every vertex of $G' \setminus \{0\}$ is connected to $0$ by an edge, the $(0,i)$ entry of $L(G')$ is $-x_{(0,i)}$, which is non zero for every vertex $i$. This implies that the row sum of the $i$th row of $L_{\widehat{0}, \widehat{0}}$ is $x_{(0,i)}$, which is no longer zero. Thus, we consider the following morphism between $\mathcal{L}_G$ and $\mathcal{M}(G',0)$:
\begin{eqnarray*}
    \phi:\mathcal{L}_G &\rightarrow & \mathcal{M}(G',0) \\
    k_{ij} &\mapsto & \left\{
    \begin{array}{ll}
    -x_{ij}: i\neq j, (i,j)\in E \\
    \sum_{j\in \nei(i)\setminus \{0\}} x_{i,j} +x_{i,0}, \,  i=j. \\
    \end{array}
    \right.
\end{eqnarray*}
As $\phi$ is invertible, with $\phi^{-1}$ given by
\begin{eqnarray*}
    \phi^{-1}: \mathcal{M}(G',0) & \rightarrow & \mathcal{L}_{G} \\
    x_{ij} & \mapsto & \left\{
    \begin{array}{ll}
    -k_{ij} : j\in \nei(i)\setminus\{0\} \\
    k_{ii}-\sum_{j: (i,j)\in E} k_{ij} : j=0, \\
    \end{array}
    \right.
\end{eqnarray*}
we conclude that $\mathcal{M}(G',0)$ is isomorphic to the Gaussian concentration model for $G$.
\end{proof}

Kirchoff's matrix-tree theorem \cite{kirchoff} states that the determinant of any principal submatrix of $L(G)$ enumerates the spanning trees of $G$, and does not depend on the row and column removed.

\begin{definition}
    The \textit{spanning tree generating function} of a connected graph $G$, denoted by $P_G$, is the sum of indicator monomials for spanning trees of $G$.
    \begin{equation}
        \label{defn: spanning tree generating function P_G}
        P_G := \sum_{\substack{T \subset G \\ \text{spanning}}} \left( \prod_{e \in T} x_e \right).
    \end{equation}
\end{definition}

The fact that $P_G = \det L(G)_{\widehat{k},\widehat{k}}$ for any vertex $k$ of $G$, along with the proof of \Cref{prop: homaloidal to det} and \Cref{prop: homaloidal iff mld 1}, shows that if the model $\mathcal{M}(G,k)$ has ML degree one for some vertex $k$ of $G$, then $P_G$ is homaloidal. Moreover, $P_G$ being homaloidal implies that $\mathcal{M}(G,k)$ has ML degree one for every vertex $k$ of $G$. Subsection \ref{subsec: chordal then homaloidal} provides a sufficient condition on $G$ for $P_G$ being homaloidal.

\subsection{Criterion for ML Degree One} 
\label{subsec: chordal then homaloidal}

In this subsection, we prove \Cref{thm: chordal implies ML degree one}, which states that spanning tree models of chordal graphs have ML degree one. This is analogous to the situation with Gaussian graphical models, where models arising from chordal graphs also have ML degree one. For spanning tree models, this result was recently proved using matrix completion in \cite{hentschel2023statistical}. Our new proof uses homaloidal polynomials and relies heavily on \Cref{prop: homaloidal to det}. We begin this subsection by the definition of chordal graphs and Schur complements.

\begin{notation}
Let $G=(V,E)$ be a graph and $S\subseteq V$. The induced subgraph of $G$ on $S$ is denoted by $G[S]$ and the edge set of this subgraph is denoted by $E[S]$.
\end{notation}

\begin{definition}
    \label{defn: chordal graph}
    A graph $G$ is chordal (or decomposable) if every cycle in $G$ with at least four vertices has a chord. Equivalently, $G = (V,E)$ is chordal if it is a complete graph, or if there exists a partition $(I,J,K)$ of $V$ such that $G[I\cup J]$ and $G[J\cup K]$ are chordal, $G[J]$ is complete, and there are no edges between the vertices in $I$ and $K$.
\end{definition}

For examples of chordal and non-chordal graphs, see \Cref{fig: chordal examples}.

\begin{figure}
    \centering
    \includegraphics{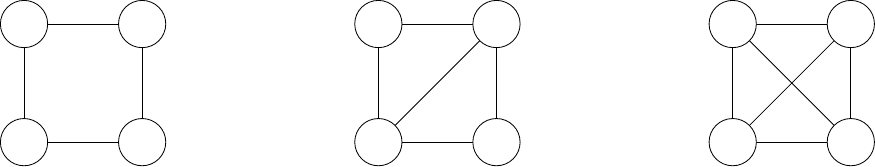}
    \caption{The 4-cycle (leftmost graph) is not chordal. The two graphs on the right are chordal.}
    \label{fig: chordal examples}
\end{figure}

\begin{definition}
    Let $M$ be an $m\times n$ matrix with the following block structure:
    \begin{align*}
        M = \begin{pmatrix}
            A & B \\ C & D
        \end{pmatrix},
    \end{align*}
    where $A$ and $D$ are square matrices. 
    \begin{enumerate}[i.]
        \item If $A$ is invertible, the Schur complement of $M$ with respect to $A$ is denoted by $M/A$, and defined as $D - CA^{-1}B$.
        \item If $D$ is invertible, the Schur complement of $M$ with respect to $D$ is denoted by $M/D$, and defined as $A - BD^{-1}C$.
    \end{enumerate}
\end{definition}

\begin{theorem}
    \label{thm: chordal implies ML degree one}
    If $G=([n],E)$ is a chordal graph, then $P_G$ is homaloidal; equivalently, the model $\mathcal{M}(G,k)$ has ML degree one for any $k\in [n]$.
\end{theorem}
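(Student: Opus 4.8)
The plan is to translate the statement into a claim about maximum likelihood degree and then prove that claim by induction on $|V(G)|$. By \Cref{prop: homaloidal to det} and \Cref{prop: homaloidal iff mld 1}, together with the matrix-tree identity $P_G=\det L(G)_{\widehat{k},\widehat{k}}$, proving that $P_G$ is homaloidal is equivalent to proving that $\mathcal{M}(G,k)$ has Gaussian ML degree one for one (hence every) vertex $k$, so it suffices to establish this for a conveniently chosen $k$. The induction follows the recursive description of chordality in \Cref{defn: chordal graph}. For the base case $G=K_n$: the assignment $\bfx_E\mapsto L(K_n)_{\widehat{k},\widehat{k}}(\bfx_E)$ is a linear isomorphism from $\mathbb{C}^{\binom{n}{2}}$ onto $\sym{n-1}$, since the $\binom{n-1}{2}$ edges inside $[n]\setminus\{k\}$ supply the off-diagonal entries, and, because the $i$-th diagonal entry of $L(K_n)$ is $\sum_{j\neq i}x_{ij}$, once the off-diagonal entries are fixed the variable $x_{ik}$ is the unique remaining parameter in row $i$. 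Hence $\mathcal{M}(K_n,k)=\sym{n-1}$ is the saturated Gaussian model, which has ML degree one; equivalently, $\det$ on $\sym{n-1}$ is the classical homaloidal polynomial, its polar map being the projectivized adjugate.

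For the inductive step I would assume $G$ is chordal but not complete and fix a decomposition $V=I\sqcup J\sqcup K$ as in \Cref{defn: chordal graph}, with $I,K\neq\emptyset$, $G[J]$ complete, and no edge between $I$ and $K$; then $G_1=G[I\cup J]$ and $G_2=G[J\cup K]$ are chordal on strictly fewer vertices, so $P_{G_1}$ and $P_{G_2}$ are homaloidal by the inductive hypothesis. Choosing $k\in K$ and ordering the vertices as $(I,\,J,\,K\setminus\{k\})$, the matrix $M:=L(G)_{\widehat{k},\widehat{k}}$ acquires a block structure in which the $(I,\,K\setminus\{k\})$ block vanishes; its $I$-block $A$ is a Laplacian of $G[I]$ plus nonzero diagonal contributions from the $I$--$J$ edges, hence invertible for generic weights, and $P_G=\det(A)\cdot\det(M/A)$. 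Because the $(I,K)$ block of $M$ vanishes, forming $M/A$ only alters the $J$-block of $M$, replacing it by a Schur complement of $M$ onto the clique $J$ — a Kron reduction of $G_1$ onto $J$ — and leaves the $K$-part of $M$ unchanged. Dually, the critical equations for $\ell_S$ on the linear model $\mathcal{M}(G,k)$ — which ask for $\Sigma$ with $\Sigma^{-1}\in\mathcal{M}(G,k)$ and $\Sigma-S$ orthogonal to $\mathcal{M}(G,k)$ in the trace form — decouple along the separator: the vanishing $(I,K)$ block lets one solve for the $I$-part of $\Sigma^{-1}$ in terms of its $J$-part, reducing the system to the critical equations of $\mathcal{M}(G_2,\cdot)$ (which return the $J$-, $(J,K)$-, and $K$-blocks of $\Sigma$) followed by those of $\mathcal{M}(G_1,\cdot)$ (which return the $I$- and $(I,J)$-blocks, with the $J$-block as boundary data), the remaining $(I,K)$ block of $\Sigma$ being fixed by the conditional independence structure. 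Each subproblem has a unique rational solution by the inductive hypothesis.

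The hard part will be the gluing along $J$: one must check that the $J$-block of $\Sigma$ produced by the $G_2$-subproblem is consistent with the $J$-block required by the $G_1$-subproblem, so that the two recursively built solutions assemble into a single $\Sigma$, and that this $\Sigma$ is still the unique critical point and is a rational function of $S$. This is the analogue for spanning-tree models of the classical gluing of maximum likelihood estimates for chordal Gaussian graphical models along a clique separator \cite{sturmfelsUhler, hentschel2023statistical}; the new feature here is the additional diagonal freedom contributed by the edges incident to the deleted vertex $k$, but since $J$ is complete this freedom is carried entirely by the invertible $J$-block, exactly as in the base case — which is precisely why completeness of the separator makes the recursion close. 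Once this is in place, the induction finishes, $P_G$ is homaloidal for every chordal $G$, and by the equivalence recalled at the outset $\mathcal{M}(G,k)$ has ML degree one for every $k$.
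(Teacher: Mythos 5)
Your reduction to ML degree one and your base case are fine (and treating the complete graph explicitly, via $\mathcal{M}(K_n,k)=\sym{n-1}$ and the adjugate, is a nice touch). The problem is the inductive step: the ``decoupling along $J$ followed by gluing'' that you explicitly defer as ``the hard part'' is precisely the substance of the theorem, so as written this is a plan rather than a proof. Worse, with your choice $k\in K$ the two subproblems are not instances of your inductive hypothesis. If you Kron-reduce $L(G)_{\widehat{k},\widehat{k}}$ onto $I\cup J$ (eliminating the $K\setminus\{k\}$ block), the resulting concentration matrix is indexed by \emph{all} of $I\cup J$ --- no vertex of $G[I\cup J]$ has been deleted --- and the linear constraints it satisfies (zero pattern of $G[I\cup J]$, zero row sums exactly at the rows in $I$, free diagonal on $J$) describe $\mathcal{M}(G',w)$ where $G'$ is $G[I\cup J]$ with a new apex vertex $w$ joined to all of $J$, not $\mathcal{M}(G[I\cup J],v)$ for any vertex $v$ of $G[I\cup J]$. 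When $K=\{k\}$ this $G'$ has as many vertices as $G$ (and can equal $G$ if $k$ is adjacent to all of $J$), so your induction on $|V(G)|$ does not strictly decrease; a similar mismatch occurs on the $G_2$ side. So both the appeal to the hypothesis and the gluing consistency are genuine gaps.

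The paper avoids both issues by choosing the deleted vertex \emph{in the separator}, $k\in J$, and inducting on the number of maximal cliques. Then for a single critical point $\Sigma$ of the full problem, a Schur-complement computation shows $(\Sigma_{I\cup J,I\cup J})^{-1}$ lies in $\mathcal{M}(G[I\cup J],k)$ --- literally the spanning tree model of the chordal subgraph with the \emph{same} deleted vertex --- and the trace equations $\langle K_e,\Sigma\rangle=\langle K_e,S\rangle$ for $e\in E[I\cup J]$ involve only that block; hence $\Sigma_{I\cup J,I\cup J}$ is a critical point of the smaller problem and is uniquely determined by induction, and symmetrically for $\Sigma_{J\cup K,J\cup K}$. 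Because one argues about one given critical point rather than assembling two sub-solutions, there is no consistency on $J$ to check, and the remaining block is forced by \Cref{lemma: zero block inverse}: $\Sigma_{I,K}=\Sigma_{I,J}\Sigma_{J,J}^{-1}\Sigma_{J,K}$. If you move your $k$ into $J$ and reformulate the step as ``the principal blocks of any critical point are critical points of the submodels,'' your ``hard part'' dissolves and your induction closes; as currently set up it does not.
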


For the proof of \Cref{thm: chordal implies ML degree one}, we require the following lemma.

\begin{lemma} \label{lemma: zero block inverse}
    Let 
    \begin{align*}
    M = \bordermatrix{
     & I & J & K \cr
     I & A & B & 0 \cr
     J & B^T & C & D \cr
     K & 0 & D^T & E 
    },
    \end{align*}
    where $A$, $C$ and $E$ are square matrices of arbitrary sizes. Define $N \coloneqq M^{-1}$. If $M$, $A$, and $E$ are invertible, then 
    \begin{align*}
        N_{I,K} = N_{I,J}\cdot N_{J,J}^{-1}\cdot N_{J,K}
    \end{align*}
\end{lemma}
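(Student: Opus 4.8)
The plan is to work directly from the identity $MN = I_n$ and never invert $M$ explicitly. Partition $N$ into blocks $N_{P,Q}$ for $P,Q\in\{I,J,K\}$, conforming to the partition of the index set of $M$. Since the $(I,K)$-block of $M$ vanishes, block-row $I$ of $MN=I_n$ reads
\[
A\,N_{I,I}+B\,N_{J,I}=\mathrm{Id},\qquad A\,N_{I,J}+B\,N_{J,J}=0,\qquad A\,N_{I,K}+B\,N_{J,K}=0.
\]
Only the last two equations are needed. Because $A$ is invertible, the middle equation gives $A^{-1}B=-\,N_{I,J}\,N_{J,J}^{-1}$ as soon as $N_{J,J}$ is known to be invertible, and the third gives $N_{I,K}=-A^{-1}B\,N_{J,K}$. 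Substituting the first expression into the second yields $N_{I,K}=N_{I,J}\,N_{J,J}^{-1}\,N_{J,K}$, which is the assertion. So the whole argument reduces to the single point that $N_{J,J}$ is invertible.

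For that step I would invoke the classical description of a diagonal block of an inverse as the inverse of a Schur complement: after permuting so that the indices in $I\cup K$ come first, $N_{J,J}$ equals the inverse of the Schur complement of $M$ with respect to its principal submatrix on $I\cup K$, and is in particular invertible, provided that principal submatrix is invertible. But the principal submatrix of $M$ on $I\cup K$ is block diagonal, namely $\operatorname{diag}(A,E)$ (the off-diagonal $(I,K)$- and $(K,I)$-blocks are zero), so it is invertible exactly because $A$ and $E$ are. This is the only place the hypotheses on $A$ and $E$ are used. Equivalently, one can use the complementary-minors identity $\det N_{J,J}=\det M_{\widehat J,\widehat J}/\det M$ together with $M_{\widehat J,\widehat J}=\operatorname{diag}(A,E)$ and $\det M\neq 0$.

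The main — and essentially the only — obstacle is establishing invertibility of $N_{J,J}$; once that is in hand, the rest is bookkeeping with the block equations of $MN=I_n$. I also note that the symmetry of $M$ (the blocks $B^T$, $D^T$) is never used, so the identity holds for the wider class of invertible block matrices with a vanishing $(I,K)$-block, $A$ and $E$ invertible.
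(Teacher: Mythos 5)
Your argument is correct, and it is organized differently from the paper's. The paper proves the identity by explicitly inverting $M$ with respect to the coarser splitting $I$ versus $J\cup K$: it forms the Schur complement $M/A$, writes its inverse as a $2\times 2$ block matrix $\bigl(\begin{smallmatrix} W_1 & W_2 \\ W_2^T & W_3 \end{smallmatrix}\bigr)$, reads off all nine blocks of $N$ from the block-inversion formula (so that $N_{I,J}=-A^{-1}BW_1$, $N_{J,J}=W_1$, $N_{I,K}=-A^{-1}BW_2$, $N_{J,K}=W_2$), and then establishes invertibility of $W_1=N_{J,J}$ by a second, nested Schur complement with respect to $E$ inside $M/A$, yielding $W_1=\bigl(C-B^TA^{-1}B-DE^{-1}D^T\bigr)^{-1}$. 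You instead extract the identity directly from the $I$-block row of $MN=\mathrm{Id}$, which needs only the invertibility of $A$ and of $N_{J,J}$, and you obtain the latter in a single step from the splitting $I\cup K$ versus $J$: the complementary principal submatrix $M_{\widehat J,\widehat J}=\mathrm{diag}(A,E)$ is invertible, so $N_{J,J}$ is the inverse of the corresponding Schur complement (equivalently, $\det N_{J,J}=\det A\det E/\det M\neq 0$ by Jacobi's identity). Both proofs hinge on the same essential point — invertibility of $N_{J,J}$ from the hypotheses on $M$, $A$, $E$ — but your route avoids the explicit block-inverse formula and the nested Schur complement, and it makes visible that the symmetry of $M$ is irrelevant, so the statement holds for any invertible block matrix with vanishing $(I,K)$- and $(K,I)$-blocks and invertible $A$ and $E$; the paper's computation, in exchange, exhibits all blocks of $N$ explicitly, which is not needed for the application in \Cref{thm: chordal implies ML degree one}.
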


\begin{proof}
Define
\begin{align*}
    & X \coloneqq A, \quad Y \coloneqq \begin{pmatrix} B & 0 \end{pmatrix}, \quad Z \coloneqq  \begin{pmatrix}C & D \\ D^T & E\end{pmatrix}.
\end{align*}
Then 
\begin{align*}
    M = \begin{pmatrix} X & Y \\ Y^T & Z \end{pmatrix}.
\end{align*}
Since $X$ and $M$ are both invertible, we have $M/X = Z - Y^T X^{-1} Y$ is also invertible. Let $(Z - Y^T X^{-1} Y)^{-1} = \begin{pmatrix} W_1 & W_2 \\ W_2^T & W_3\end{pmatrix}$, where $W_1$ and $W_3$ have the same sizes as $C$ and $E$ respectively. Therefore, 
\begin{align*}
    N = &\begin{pmatrix}
        X^{-1} + X^{-1} Y (Z - Y^T X^{-1} Y)^{-1} Y^T X^{-1} & - X^{-1} Y (Z - Y^T X^{-1} Y)^{-1} \\
        -(Z - Y^T X^{-1} Y)^{-1} Y^T X^{-1} & (Z - Y^T X^{-1} Y)^{-1}
    \end{pmatrix}
    \\
    = & \begin{pmatrix}
        A^{-1} + A^{-1} B W_1 B^T A^{-1} & -A^{-1}B W_1 & -A^{-1} B W_2 \\
        -W_1 B^T A^{-1} & W_1 & W_2 \\
        -W_2^T B^T A^{-1} & W_2 ^T & W_3
    \end{pmatrix},
\end{align*}
and we have $N_{I,K} = N_{I,J}\cdot N_{J,J}^{-1}\cdot N_{J,K}$ when $W_1$ is invertible. On the other hand, 
\[W_1 = \left(C- B^T A^{-1} B - D E^{-1} D^T\right)^{-1}\]
when $E$ is invertible. So, $W_1$ is invertible.
\end{proof}

\begin{proof}[Proof of \Cref{thm: chordal implies ML degree one}]
    For a chordal graph $G=([n],E)$ and any $k\in [n]$, 
    \begin{align*}
         \mathcal{M}(G,k)^{-1}= \overline{\left\{\Sigma \in {\sym{n-1}} \mid \Sigma^{-1} = \sum_{e\in E} x_e K_e^{G,k} \text{ for some } x_e\in \mathbb{C}.\right\}},
    \end{align*}
    where for $e=ij$, if $\{i,j\} \cap \{k\} = \emptyset$, then
    \begin{align*}
        (K_e^{G,k})_{rs} = \begin{cases}
            1 & \text{if $r=s=i$ or $r=s=j$} \\
            -1 & \text{if $\{r,s\} = \{i,j\}$} \\
            0 & \text{otherwise}
        \end{cases},
    \end{align*}
    and if $i=k$, then
    \begin{align*}
        (K_e^{G,k})_{rs} = \begin{cases}
            1 & \text{if $r=s=j$} \\
            0 & \text{otherwise}
        \end{cases}.
    \end{align*}
    We will prove $\mathcal{M}(G,k)$ has ML degree one for some $k\in [n]$. By \cite{améndola2023differential}, this will guarantee $P_G$ is a homaloidal polynomial. The proof is done by induction on the number of maximal cliques of $G$. 
    
    First, assume $G$ has 2 maximal cliques $C_1$ and $C_2$. Let $k\in C_1 \cap C_2$. The set of critical points of the log-likelihood function over $\mathcal{M}(G,k)$ is in bijection with the set of points $\Sigma = (\sigma_{ij} )_{i,j\in [n-1]} \in \mathcal{M}(G,k)^{-1}$ with $\langle \Sigma, K_e^{G,k}\rangle = \langle S, K_e^{G,k}\rangle$ for all $e\in E$ and a generic sample covariance $S = ( s_{ij} )_{i,j\in [n-1]}$ \cite[Page 2, (3)]{sturmfelsUhler}. For $e=ik\in E$, 
    \begin{align} \label{eq: trace1}
        \langle \Sigma, K_e^{G,k}\rangle = \langle S, K_e^{G,k}\rangle \Longleftrightarrow \sigma_{ii} = s_{ii}.
    \end{align}
    For $e=ij\in E$, where $ik,jk\in E$,
    \begin{align} \label{eq: trace2}
        \langle \Sigma, K_e^{G,k}\rangle = \langle S, K_e^{G,k}\rangle \Longleftrightarrow \sigma_{ii} + \sigma_{jj} - 2\sigma_{ij} = s_{ii} + s_{jj} - 2s_{ij}.
    \end{align}
    The matrix $\Sigma^{-1}$ has the following block structure:
    \begin{align*}
        \Sigma^{-1} = \bordermatrix{
     & C_1\setminus C_2 & C_1 \cap C_2 \setminus \{k\} & C_2 \setminus C_1 \cr
     C_1\setminus C_2 & \ast & \ast & 0 \cr
     C_1 \cap C_2 \setminus \{k\} & \ast & \ast & \ast \cr
     C_2 \setminus C_1 & 0 & \ast & \ast
    }
    \end{align*}
    By \eqref{eq: trace1} and \eqref{eq: trace2}, for all $i\in [n-1]$, the entries $\sigma_{ii}$, and for all $ij\in E$, the entries $\sigma_{ij}$, i.e. the blocks $\Sigma_{C_1\setminus \{k\},C_1\setminus \{k\}}$ and $\Sigma_{C_2\setminus \{k\},C_2\setminus \{k\}}$, are uniquely determined. By \Cref{lemma: zero block inverse}, this guarantees that $\Sigma_{C_1\setminus C_2,C_2\setminus C_1}$ and $\Sigma_{C_2\setminus C_1,C_1\setminus C_2}$ are uniquely determined too. So, the ML degree of $\mathcal{M}(G,k)$ is one in this case.

    Assume $P_G$ is homaloidal if $G$ is a chordal graph with at most $s$ maximal cliques. 
    
    Consider a chordal graph $G=([n],E)$ with $s+1$ maximal cliques. Since $G$ is chordal, there exists a partition $(I,J,K)$ of $[n]$ such that the graphs $G[I\cup J]$ and $G[J\cup K]$ are both chordal, $G[J]$ is a clique, and there are no edges between the vertices in $I$ and $K$.

    Let $j\in J$ and $S$ be a generic covariance matrix. The ML degree of $\mathcal{M}(G,j)$ is equal to $|\mathcal{CP}(G)|$, where 
    \begin{align*}
        \mathcal{CP}(G)\coloneqq \left\{ \Sigma \in \sym{n-1} \mid \langle K_e^{G,j}, \Sigma \rangle = \langle K_e^{G,j}, S \rangle \text{ for all $e\in E$, and } \Sigma^{-1}=L(G)_{\widehat{j}, \widehat{j}} (\bfx_E) \text{ for some } \bfx_E\in \mathbb{C}^E \right\} .
    \end{align*}
    Assume
    \begin{align*}
        L(G)_{\widehat{j}, \widehat{j}}(\bfx_E) = \bordermatrix{
     & I & J' & K \cr
     I & A & B & 0 \cr
     J' & B^T & C & D \cr
     K & 0 & D^T & E 
    }(\bfx_E),
    \end{align*}
    where $J'\coloneqq J\setminus \{j\}$. Let $K(G,j)(\bfx_E)\coloneqq \left(L(G)_{\widehat{j}, \widehat{j}}(\bfx_E)\right)^{-1}$. Using Schur complements, one can verify that 
    \begin{align*}
        &K(G,j)(\bfx_E)_{I \cup J', I \cup J'} = \begin{pmatrix} A & B \\ B^T & C-DE^{-1}D^T \end{pmatrix}^{-1}(\bfx_E), \text{ and } \\
        &K(G,j)(\bfx_E)_{J' \cup K, J' \cup K} = \begin{pmatrix} C-B^T A^{-1} B & D \\ D^T & E\end{pmatrix}^{-1}(\bfx_E).
    \end{align*}
    On the other hand, for all $\Sigma \in \mathcal{CP}(G)$, we have
    \begin{align*}
        \langle K_e^{G[I \cup J],j}, S_{I \cup J', I \cup J'}  \rangle = \langle K_e^{G[I \cup J],j}, \Sigma_{I \cup J', I \cup J'} \rangle \text{ for all $e\in E[I\cup J]$},
    \end{align*}
    and for some $\bfx_{E[I \cup J]}\in \mathbb{C}^{E[I \cup J]}$,
    \begin{align*}
        \left(\Sigma_{I\cup J', I \cup J'}\right)^{-1} = \left(K(G,j)(\bfx_E)_{I\cup J', I \cup J'}\right)^{-1} = \begin{pmatrix} A & B \\ B^T & C-DE^{-1}D^T \end{pmatrix}(\bfx_E).
    \end{align*}
    We will show that
    \begin{align}\label{eq: smaller model}
       \mathcal{M}(G[I\cup J],j)^{-1} &= \overline{\left\{ \Gamma \in S_{|I \cup J'|} \mid \Gamma^{-1} = \begin{pmatrix} A & B \\ B^T & C-DE^{-1}D^T \end{pmatrix}(\bfx_E) \text{ for some } \bfx_E\in \mathbb{C}^E \right\}}.
    \end{align}
    By induction hypothesis, $\mathcal{M}(G[I\cup J],j)$ has ML degree one. So, proving \eqref{eq: smaller model} will show that 
    $\Sigma_{I\cup J', I \cup J'}$ is uniquely determined. Similarly, $\Sigma_{J'\cup K, J'\cup K}$ is uniquely determined. Now the fact that $\Sigma$ is determined uniquely follows from \Cref{lemma: zero block inverse}.

    To show \eqref{eq: smaller model}, assume $E'$ denotes the set of edges in $G$ with one endpoint in $J'=J\setminus \{j\}$ and one endpoint in $K$. Let $\Gamma \in \mathcal{M}(G[I\cup J],j)^{-1}$. Then 
\begin{align*}
    \Gamma^{-1} = \begin{pmatrix}
        A & B \\ B^T & C
    \end{pmatrix} (\bfx_E)
\end{align*}
for some $\bfx_{E}\in \mathbb{C}^{E}$ with $\bfx_{E'} = 0$. So, considering that $D(\bfx_E) = 0$, one can write
\begin{align*}
    \Gamma^{-1} = \begin{pmatrix}
        A & B \\ B^T & C - DE^{-1}D^T
    \end{pmatrix}(\bfx_E).
\end{align*}
Therefore,
\begin{align*}
    \mathcal{M}(G[I\cup J],j)^{-1} \subseteq \overline{\left\{ \Gamma \in S_{|I \cup J'|} \mid \Gamma^{-1} = \begin{pmatrix} A & B \\ B^T & C-DE^{-1}D^T \end{pmatrix}(\bfx_E) \text{ for some } \bfx_E\in \mathbb{C}^E \right\}}.
\end{align*}
Now let 
\begin{align*}
    \Gamma^{-1} = \begin{pmatrix}
        A & B \\ B^T & C-DE^{-1}D^T
    \end{pmatrix}(\bfx_E)
\end{align*}
for some $\bfx_E\in \mathbb{C}^E$. We will show that $\Gamma\in \mathcal{M}(G[I\cup J],j)^{-1}$.  Note that one can write 
\begin{align*}
    \mathcal{M}(G[I\cup J],j)^{-1} =
        \overline{
        \left\{
        \Gamma \in S_{|I \cup J'|} \mid
        (\Gamma^{-1})_{uv} = 0 \text{ if } uv \not\in E, \ \ \sum_{v\in I\cup J'} (\Gamma^{-1})_{uv} = 0 \text{ if } uj\not\in E 
        \right\}
        }.
\end{align*}
Now since $G[J]$ is a complete graph and $j\in J$, we have
\begin{align*}
    \mathcal{M}(G[I\cup J],j)^{-1} 
        &=
        \overline{
        \left\{
        \Gamma \in S_{|I \cup J'|} \mid \text{ for all } u \in I,\ \ 
        (\Gamma^{-1})_{uv} = 0 \text{ if } uv \not\in E, \text{ and} \sum_{v\in I\cup J'} (\Gamma^{-1})_{uv} = 0 \text{ if } uj\not\in E
        \right\}
        }\\
        &= \overline{
        \left\{
        \Gamma \in S_{|I \cup J'|} \mid \left(\Gamma^{-1}\right)_{I, I \cup J'} = \left( L(G[I\cup J])_{{\widehat{j}, \widehat{j}}}\right)_{I, I \cup J'}(\bfx_E) \text{ for some }\bfx_E\in \mathbb{C}^E
        \right\}
        }\\
        &=\overline{
        \left\{
        \Gamma \in S_{|I \cup J'|} \mid \left(\Gamma^{-1}\right)_{I, I \cup J'} = \begin{pmatrix} A & B\end{pmatrix}(\bfx_E) \text{ for some }\bfx_E\in \mathbb{C}^E
        \right\}
        }.
\end{align*}
So, $\Gamma\in \mathcal{M}(G[I\cup J],j)^{-1}$, and we have
\begin{align*}
    \mathcal{M}(G[I\cup J],j)^{-1} \supseteq \overline{\left\{ \Gamma \in S_{|I \cup J'|} \mid \Gamma^{-1} = \begin{pmatrix} A & B \\ B^T & C-DE^{-1}D^T \end{pmatrix}(\bfx_E) \text{ for some } \bfx_E\in \mathbb{C}^E \right\}}.
\end{align*}
This concludes the proof of \eqref{eq: smaller model}, and hence, the proof of theorem.
\end{proof}

\begin{remark}
    Given a generic sample covariance matrix $S$, the proof of \Cref{thm: chordal implies ML degree one} suggests an inductive method for computing the MLE of $\mathcal{M}(G,j)$ for a chordal graph $G=([n],E)$ and $j \in [n]$ only when $j$ is adjacent to every other vertex in $G$. Note that in this case, $\mathcal{M}(G,j)$ is a Gaussian graphical concentration model by \Cref{lemma: our models include grpahical models}. Investigating possible relations between the MLE of $\mathcal{M}(G,j)$ and the MLE of $\mathcal{M}(G,j')$ for some $j' \in [n] \setminus \{j\}$ remains a topic for future work. 
\end{remark}

\subsection{ML Degree for Cycles}

Let $C_n$ be a cycle graph on $n$ vertices with $n \geq 3$. In this subsection, we explicitly compute the ML degree for $\mathcal{M}(C_n, k)$, which results in \Cref{thm: cycle ml degree}.

\begin{definition}
    \label{defn: Eulerian numbers}
    The $(n-1)$st Eulerian number is the number of subsets of $[n-1]$ with at least two elements, i.e., $2^{n-1}-n$. For $n = 3, 4, 5, 6, 7$ the Eulerian numbers are $1, 4, 11, 26,$ and $57$. This is OEIS sequence A000295 \cite{OEIS}.
\end{definition}

\begin{theorem}
    \label{thm: cycle ml degree}
    The ML degree of $\mathcal{M}(C_n, k)$, for any $k\in [n]$, is the $(n-1)$st Eulerian number.
\end{theorem}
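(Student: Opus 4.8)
The plan is to compute $\mld_{\det}(\mathcal{M}(C_n,k))$ by directly analyzing the critical equations of the Gaussian log-likelihood on the covariance model $\mathcal{M}(C_n,k)^{-1}$, exactly as in the proof of Theorem~\ref{thm: chordal implies ML degree one}, and to show the number of solutions for generic data equals $2^{n-1}-n$. Since the matrix-tree theorem guarantees the ML degree is independent of $k$, I would fix $k=n$, so the concentration matrix is $L(C_n)_{\widehat{n},\widehat{n}}$, an $(n-1)\times(n-1)$ tridiagonal-plus-corner matrix whose nonzero off-diagonal entries are $-x_i$ along the diagonal (the path edges $12,23,\dots,(n-2)(n-1)$) and whose diagonal entries absorb the two edges incident to vertex $n$. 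Concretely $K = \Sigma^{-1}$ has the sparsity pattern of the path $P_{n-1}$: $K_{i,i+1}=-x_i$ for $i=1,\dots,n-2$ and all other off-diagonal entries zero; the edges $\{1,n\}$ and $\{n-1,n\}$ contribute only to $K_{11}$ and $K_{n-1,n-1}$. So the model is the Gaussian graphical model on the path $P_{n-1}$ intersected with one extra linear condition, or—more usefully—it is a model whose concentration matrices are tridiagonal but with a two-dimensional "extra" freedom in the first and last diagonal entries coming from the two cycle edges at $n$.

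\textbf{Key steps.} First I would write out the critical equations. By the same trace computation as in~\eqref{eq: trace1}--\eqref{eq: trace2}, for the path edges $e=\{i,i+1\}$ with $i,i+1\neq n$ we get $\sigma_{i,i+1}$ fixed to $s_{i,i+1}$ when neither endpoint is $n$, and for the edges meeting $n$—say $\{1,n\}$ and $\{n-1,n\}$—we get a condition on $\sigma_{11}$ and $\sigma_{n-1,n-1}$ respectively; and for each edge $\{i,i+1\}$ the combination $\sigma_{ii}+\sigma_{i+1,i+1}-2\sigma_{i,i+1}$ is fixed. The upshot is: all superdiagonal entries $\sigma_{i,i+1}$ are determined, and the diagonal entries $\sigma_{11},\dots,\sigma_{n-1,n-1}$ satisfy $n-2$ linear relations (one per path edge) plus the entries $\sigma_{11},\sigma_{n-1,n-1}$ pinned down—wait, I must be careful: the edges at vertex $n$ pin $\sigma_{11}$ and $\sigma_{n-1,n-1}$, and each path edge pins $\sigma_{ii}+\sigma_{i+1,i+1}-2\sigma_{i,i+1}$, so actually all diagonals and all superdiagonals of $\Sigma$ are determined by the data. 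The genuine unknowns are the off-band entries $\sigma_{ij}$ with $|i-j|\geq 2$. The nonlinear content is the requirement $\Sigma^{-1}$ be tridiagonal, i.e. $(\Sigma^{-1})_{ij}=0$ for $|i-j|\geq 2$, which by the cofactor formula says $\det\Sigma_{\widehat{i},\widehat{j}}=0$ for all $|i-j|\geq 2$. So I must count solutions $\Sigma\in\sym{n-1}$ with prescribed band (tridiagonal part) such that all off-band minors $\Sigma_{\widehat i,\widehat j}$ vanish.

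\textbf{The combinatorial heart.} For a symmetric matrix with a prescribed tridiagonal part, the condition that all "diagonal-2" cofactors $\det\Sigma_{\widehat{i},\widehat{i+2}}$ vanish is a classical chordal/path matrix-completion phenomenon, and I expect to show by induction on $n$ that solving it recursively forces, at each "gap", a choice that splits into two branches—roughly, $\sigma_{i,i+2}$ is determined by a rational expression except on a locus where a denominator vanishes, giving an extra component. I would make this precise by triangulating: use the vanishing of the $(1,3)$ cofactor to solve for $\sigma_{13}$ in terms of already-known band entries and $\sigma_{1,j}$ for $j>3$; propagate. The cleanest route is probably to observe that $\mathcal{M}(C_n,k)^{-1}$ is the variety cut out by Theorem~\ref{thm: covariance ideal equations} and to compute its degree via an explicit resolution, or—more honestly—to recognize the recursion $a_n = 2a_{n-1}+n-2$ with $a_3=1$ (which solves to $2^{n-1}-n$) directly from the geometry of removing one vertex of the cycle. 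I would set up: deleting an appropriate vertex turns $C_n$ into a path, whose model has ML degree one (chordal case), and the discrepancy $2^{n-1}-n$ between the cycle and the tree is accounted for by the "extra" solutions living where successive principal minors of the partial covariance matrix degenerate. I would therefore prove the ML degree equals the number of subsets of $[n-1]$ of size $\geq 2$ by setting up a bijection between the critical points (for generic data) and such subsets, where a subset $T$ records the set of "gaps" at which the degenerate branch is taken; the constraint $|T|\geq 2$ reflects that a single degenerate gap is inconsistent with the band data being generic.

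\textbf{Main obstacle.} The hard part will be the precise bookkeeping of the branching: showing that for generic $S$ each choice of admissible gap-set yields exactly one critical point, that distinct gap-sets yield distinct points, that no solutions are lost "at infinity" or on $V(\det\Sigma)$, and—crucially—identifying exactly which gap-sets are admissible (namely those of size at least two) rather than all $2^{n-1}$ subsets or all nonempty ones. This is where a careful induction, peeling off one vertex of the cycle and tracking how the count $a_{n-1}$ for the path/smaller cycle transforms, will be needed; I expect the recursion $a_n = 2a_{n-1} + (n-2)$, or equivalently a generating-function identity matching OEIS A000295, to fall out once the branching structure is pinned down, and verifying the base cases $n=3,4$ by direct computation (available in the accompanying code) will anchor it.
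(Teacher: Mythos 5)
Your setup on the covariance side contains a concrete bookkeeping error that breaks the reduction before the counting even starts. With $k=n$, only the vertices $1$ and $n-1$ are adjacent to $n$, so the trace equations of type \eqref{eq: trace1} pin down only $\sigma_{11}$ and $\sigma_{n-1,n-1}$, while each path edge contributes one equation of type \eqref{eq: trace2} on the combination $\sigma_{ii}+\sigma_{i+1,i+1}-2\sigma_{i,i+1}$. That is $n$ linear conditions on the $2n-3$ band entries, leaving $n-3$ free band parameters for $n\geq 4$; your claim that ``all diagonals and all superdiagonals of $\Sigma$ are determined by the data'' is false (it would hold only if every vertex were adjacent to $n$, i.e.\ the chordal situation of \Cref{thm: chordal implies ML degree one}). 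Moreover, membership of $\Sigma^{-1}$ in $\mathcal{M}(C_n,n)$ is not just tridiagonality: since the interior vertices $2,\dots,n-2$ are not adjacent to $n$, you must also impose the row-sum relations $(\Sigma^{-1})_{ii}=-\bigl((\Sigma^{-1})_{i-1,i}+(\Sigma^{-1})_{i,i+1}\bigr)$, exactly the second family of generators in \Cref{thm: covariance ideal equations}, which your list of equations omits. So the system you propose to count solutions of is not the correct critical system.

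Even granting a corrected setup, the heart of your argument is missing: the bijection between critical points and ``gap-sets'' of size at least two, equivalently the recursion $a_n=2a_{n-1}+(n-2)$, is only conjectured (``I expect\dots'', ``the cleanest route is probably\dots''), and you yourself flag as the main obstacle precisely the steps that constitute the proof — that each admissible branch gives exactly one solution for generic $S$, that no solutions are lost on $V(\det\Sigma)$ or at infinity, and why single ``degenerate gaps'' are excluded. Nothing in the proposal establishes any of this, nor the genericity/constancy of the count needed to evaluate it at a convenient $S$. For comparison, the paper avoids the covariance side entirely: by \Cref{lem: ML deg is deg} the ML degree equals the degree of the gradient map $\nabla P_{C_n}$ once that map is shown to be dominant (\Cref{thm: cycle dense}, via invertibility of the Hessian at the all-ones point), and \Cref{thm: cycle degree} then computes the topological degree by exhibiting the all-ones vector as a regular value and solving its fiber explicitly — solutions take two values $a,b$ on a partition $I\sqcup J$ of $[n]$ with $a/b=(|I|-1)/(1-|J|)$, giving the bijection with subsets of $[n-1]$ of size at least two, with holomorphicity guaranteeing each fiber point counts with $+1$. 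If you want to salvage the covariance-side route, you would need to prove the branching/recursion claim rigorously, which is a substantial argument not sketched here.
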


If $f$ is a homogeneous polynomial of degree $d$ in $n + 1$ variables, then each coordinate of $\nabla f$ is homogeneous of degree $d - 1$. Therefore, the coordinates of $\nabla f$ define a rational map between projective spaces, which we denote by $\overline{\nabla f}: \PP^n \dashrightarrow \PP^n$. It is defined away from the simultaneous vanishing of the partial derivatives, $V(\partial f) := V(\frac{\partial f}{\partial x_i} \mid i = 0, \ldots, n)$.
In order to prove \Cref{thm: cycle ml degree}, we redefine the ML degree of $\mathcal{M}(C_n, k)$ as the degree of $\overline{\nabla P}_{C_n}: \PP^{n-1} \dashrightarrow \PP^{n-1}$.

\begin{lemma}
    \label{lem: ML deg is deg}
    Let $\mathcal{M} = \{ K(\bfx) = \sum_{i=0}^d x_i K_i \mid \bfx \in \mathbb{C}^{d+1} \}$ be a linear concentration model parameterized by linearly independent matrices $K_0, \ldots, K_d \in \mathrm{Sym}_n$, and let $f(\bfx) = \det K(\bfx)$ and $\bfu(S) = \nabla \mathrm{tr}(SK(\bfx))$. Assume that $\overline{\nabla f}: \PP^d \dashrightarrow \PP^d$ is dominant. The ML degree of $\mathcal{M}$ is equal to the degree of $\overline{\nabla f}: \PP^d \dashrightarrow \PP^d$.
\end{lemma}

\begin{proof}
    The ML degree of a Gaussian model is the number of critical points to the log-likelihood function, $\ell_{\mathcal{M}}(S)$, for generic $S \in \mathrm{Sym}_n$. The number of critical points of $\ell_{\mathcal{M}}(S)$ is the the number of solutions to the system of partial derivatives of the log-likelihood function:
    \begin{equation}
        \label{eqn: MLE equation}
        \frac{\partial}{\partial x_i} \left( \log f - \mbox{tr} S K \right) = 0.
    \end{equation}
    Rearranging the equation and writing it in vector form, we obtain the following equivalent system of equations:
    \begin{equation}
        \label{eqn: score equations}
        \frac{\nabla f(\bfx)}{f(\bfx)} = \bfu(S)
    \end{equation}
    (cf \cite[Theorem 4.2(a)]{améndola2023differential}. Note that since $K_0, \ldots, K_d$ are linearly independent, $\bfu(\mathrm{Sym}_n) = \CC^{d+1}$. Therefore, $\bfu(S)$ is generic for generic $S \in \mathrm{Sym}_n$ \cite[Theorem 2.3(a)]{améndola2023differential}. We claim that for generic $\bfu$, the above system has the same number of solutions as the following projective system:
    \begin{equation}
        \label{eqn: proj sys}
        \frac{\overline{\nabla f}(\bfx)}{f(\bfx)} = \bfu(S).
    \end{equation}
    Since $\overline{\nabla f}$ is dominant, $\mathrm{im}(\overline{\nabla f}) \setminus \overline{\nabla f} (V(f) \setminus V(\partial f))$ contains a dense open subset of $\PP^d$. So for generic $\bfu$, for any solution $\bfx$ to the system \eqref{eqn: proj sys} we have $f(\bfx) \neq 0$. 
    Thus, the system \eqref{eqn: proj sys} is equivalent to $\overline{\nabla f(\bfx)} = \bfu$ and the number of solutions is exactly equal to the degree of $\overline{\nabla f} : \PP^d \dashrightarrow \PP^d$.
    To prove the claim, we give a bijection between the solutions to \eqref{eqn: score equations} and the solutions to \eqref{eqn: proj sys}. A key observation is that $\overline{\nabla f}(\bfx)/f(\bfx)$ is homogeneous of degree $-1$. Suppose $\bfx, \bfy \in \CC^{d+1}$ are distinct and both satisfy \eqref{eqn: score equations}. Then $[\bfx], [\bfy] \in \PP^d$ satisfy \eqref{eqn: proj sys}. Furthermore, $[\bfx] \neq [\bfy]$; if $\bfx = \lambda \bfy$ for some $\lambda \in \CC^*$ then by homogeneity, 
    $$\frac{\nabla f (\bfx)}{f(\bfx)}  = \frac{1}{\lambda} \frac{\nabla f (\bfy)}{f(\bfy)} = \bfu = \frac{\nabla f (\bfy)}{f(\bfy)} \implies \lambda = 1 \implies \bfx = \bfy.$$
    On the other hand, if $[\bfx] \in \PP^d$ is a solution to \eqref{eqn: proj sys}, then let $\bfx \in \CC^{d+1}$ be any representative of $[\bfx]$. Since $[\bfx]$ solves \eqref{eqn: proj sys}, we have
    $$\frac{\nabla f(\bfx)}{f(\bfx)} = \lambda \bfu, ~~~ \exists \lambda \in \CC^\ast.$$
    Again using the homogeneity of $f$, $\bfy = \lambda \bfx$ is a solution to \eqref{eqn: score equations}, and it is the only solution to \eqref{eqn: score equations} that is a scalar multiple of $\bfx$. This proves the claim.
\end{proof}

In \Cref{thm: cycle dense} below, we prove that the gradient map is dominant for cycles. This, together with \Cref{lem: ML deg is deg}, proves that the ML degree of $\mathcal{M}(C_n, k)$ is equal to the degree of $\nabla P_{C_n}$ as a map of complex manifolds.

\begin{prop}
    \label{thm: cycle dense}
    The map $\overline{\nabla P}_{C_n} : \PP^{n-1} \dashrightarrow \PP^{n-1}$ is dominant.
\end{prop}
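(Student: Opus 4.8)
The plan is to show that the polar map $\nabla P_{C_n}\colon \mathbb{P}^n \dashrightarrow \mathbb{P}^n$ has image of full dimension $n$, equivalently that the Jacobian of $\nabla P_{C_n}$ — the Hessian of $P_{C_n}$ — has generic rank $n+1$, i.e. $\det \operatorname{Hess}(P_{C_n}) \not\equiv 0$. Label the edges of $C_n$ as $x_1,\dots,x_n$ cyclically. The spanning trees of $C_n$ are exactly the complements of single edges, so
\begin{equation*}
    P_{C_n} = \sum_{i=1}^n \prod_{j \ne i} x_j = e_{n-1}(x_1,\dots,x_n),
\end{equation*}
the $(n-1)$st elementary symmetric polynomial. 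So the claim reduces to a purely combinatorial/algebraic statement: the polar map of $e_{n-1}$ on $n$ variables is dominant for all $n \ge 3$ (we only need $n \ge 4$ for the theorem, but the case $n=3$ is covered too, since $e_2$ in three variables is the triangle's spanning-tree polynomial and is in fact homaloidal).

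First I would compute $\partial P_{C_n}/\partial x_i = e_{n-2}(x_1,\dots,\widehat{x_i},\dots,x_n)$, the $(n-2)$nd elementary symmetric polynomial in the remaining $n-1$ variables. To prove dominance it suffices to exhibit a single point at which the Hessian is nonsingular, or equivalently to show the $n$ partials are algebraically independent. The cleanest route: evaluate the gradient map at a convenient point and show it is a local isomorphism there. A natural choice is a point with distinct coordinates, e.g. $x_i = t_i$ for generic $t_i$; by symmetry and a dimension count it is enough to check that the differential of $(e_{n-2}(\widehat{x_i}))_{i=1}^n$ is invertible at one generic point. Alternatively — and this is the step I would actually push through — I would show directly that the fibers of $\nabla P_{C_n}$ are generically finite by using the factorization structure: over the open set where all $x_i \ne 0$, set $x_i = 1/w_i$; then $e_{n-2}(\widehat{x_i}) = \big(\prod_{j\ne i} x_j\big)\cdot \sigma_{?}(w)$ can be massaged, but cleaner is to note $e_{n-1} = \big(\prod x_i\big) p_{-1}$ where $p_{-1} = \sum 1/x_i$, so on the torus $P_{C_n}$ is the product of a monomial and a linear form in the $1/x_i$; computing $\nabla$ in these coordinates reduces the Jacobian computation to that of a monomial map composed with the gradient of $\big(\prod x_i\big)\big(\sum x_i^{-1}\big)$, which one checks is nondegenerate.

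The genuinely load-bearing computation is verifying $\det\operatorname{Hess}(e_{n-1}) \not\equiv 0$. I expect the slick way is: the second partials are $\partial^2 e_{n-1}/\partial x_i \partial x_j = e_{n-3}(x_1,\dots,\widehat{x_i},\dots,\widehat{x_j},\dots,x_n)$ for $i \ne j$ and $0$ on the diagonal. So $\operatorname{Hess}(e_{n-1})$ is a symmetric matrix with zero diagonal whose off-diagonal $(i,j)$ entry is $e_{n-3}$ of the other $n-2$ variables. To see this is generically nonsingular, specialize: set all $x_i$ equal to $1$. Then every off-diagonal entry becomes $\binom{n-2}{n-3} = n-2$ and the diagonal is $0$, so $\operatorname{Hess}(e_{n-1})|_{x=\mathbf 1} = (n-2)(J - I)$ where $J$ is the all-ones matrix; this has determinant $(n-2)^n \cdot (-1)^{n-1}(n-1) \ne 0$ for $n \ge 3$. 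Hence the Hessian is nonsingular at $x = \mathbf 1$, so its determinant is not identically zero, so $\nabla P_{C_n}$ is dominant. I would present exactly this one-point specialization argument — it is short, avoids the torus coordinate change entirely, and handles all $n \ge 3$ uniformly.
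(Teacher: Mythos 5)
Your proposal is correct and follows essentially the same route as the paper: both identify $P_{C_n}=e_{n-1}$, observe the Hessian has zero diagonal and constant off-diagonal entries $n-2$ at the all-ones point, conclude that it is nonsingular there, and deduce dominance from the inverse function theorem / generic rank of the polar map. Your determinant value $(n-2)^n(-1)^{n-1}(n-1)$ is in fact the accurate one (the paper's stated $(-1)^{n-1}(n-1)$ drops the factor $(n-2)^n$), and either way it is nonzero for $n\geq 3$, so the argument goes through.
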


\begin{proof}
    We will show that the Hessian matrix of $P_{C_n}$ is invertible at a point, and then conclude via the inverse function theorem that the image of $\nabla P_{C_n}$ contains an open subset of $\CC^n$. The spanning trees of a cyclic graph on $n$ vertices are the size $(n-1)$ subsets of edges. Therefore,
    \begin{equation}
        \label{eqn: P_G cycle}
        P_{C_n} = \sum_{i = 1}^n x_1 \cdots \widehat{x}_i \cdots x_n.
    \end{equation}

    We can then compute the gradient, $\nabla P_{C_n}$, directly.

    \begin{equation}
        \label{eqn: nabla P_G coordinates cycle}
        \left( \nabla P_{C_n} \right)_i = \sum_{j: j \neq i} \left( \prod_{k: k \neq i, j} x_k \right).
    \end{equation}

    For any graph $G$, the diagonal entries of $H(P_{G})$ are always zero because $P_{G}$ is square-free (no spanning tree contains the same edge twice). The off-diagonal entries of $H(P_{C_n})$ can be computed directly from $\nabla P_{C_n}$.
    \begin{equation}
        \label{eqn: hessian cycle entries}
        H(P_{C_n})_{ik} = \frac{\partial \left( \nabla P_{C_n} \right)_i}{\partial x_k} =
        \begin{cases}
            \hphantom{..} 0 & \text{if } i = k, \\
            {\displaystyle \sum_{j : j \neq i, k}} \left( \displaystyle \prod_{\ell : \ell \neq i, j, k} x_\ell \right) & \text{otherwise.}
        \end{cases}
    \end{equation}

    Now evaluating $H(P_{C_n})$ at $\qq_n = (1,1, \ldots, 1)$, we find
    \[
    H(P_{C_n})_{ik} = 
        \begin{cases}
            \hphantom{..} 0 & \text{if } i = k, \\
            n - 2 & \text{otherwise.}
        \end{cases}
    \]

    This matrix has determinant $(-1)^{n-1} (n - 1) (n - 2)^n \neq 0$, so the inverse function theorem implies that $\nabla P_{C_n}$ is invertible near $(1 : \cdots : 1)$ as a map of complex manifolds. That means the image of $\overline{\nabla P}_{C_n}$ contains an open subset of $\PP^n$, which is Zariski dense. Thus, $\overline{\nabla P}_{C_n}$ is dominant.
\end{proof}

\Cref{lem: ML deg is deg} and \Cref{thm: cycle dense} show that in order to find the ML degree of the model $\mathcal{M}(C_n,k)$, it suffices to find the degree of the map $\nabla P_{C_n}$. \Cref{thm: cycle degree} finds the degree of this map.

\begin{theorem}
    \label{thm: cycle degree}
    The degree of $\overline{\nabla P}_{C_n}$ is the $(n-1)$st Eulerian number. Equivalently,  $\deg \overline{\nabla P}_{C_n}$ is the number of subsets of $[n-1]$ with at least two elements.
\end{theorem}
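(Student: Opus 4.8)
The goal is to compute the degree of the birational (well, dominant, by \Cref{thm: cycle dense}) map $\nabla P_{C_n}\colon \PP^n \dashrightarrow \PP^n$, i.e., the number of preimages of a generic point. Equivalently, by \Cref{lem: ML deg is deg} and \Cref{thm: cycle dense}, this is the number of solutions $\bfx$ to the system $\nabla P_{C_n}(\bfx)/P_{C_n}(\bfx) = \bfu$ for generic $\bfu \in \CC^n$. Writing $e_{n-1} = P_{C_n} = \sum_i x_1\cdots\widehat{x_i}\cdots x_n$ and observing that $(\nabla P_{C_n})_i = \partial e_{n-1}/\partial x_i = e_{n-2}(\bfx \setminus x_i)$, the $i$th equation is $e_{n-2}(\bfx\setminus x_i) = u_i\, e_{n-1}(\bfx)$. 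The first move is to pass to the substitution $t_i := 1/x_i$ (legitimate on the dense open set where all $x_i \neq 0$; one should check separately that no solutions escape to a coordinate hyperplane for generic $\bfu$, which follows from genericity since $\nabla P_{C_n}$ is defined and the count is constant on the dense locus). Under $t_i = 1/x_i$ one has $e_{n-1}(\bfx) = (t_1\cdots t_n)^{-1}\cdot(\text{something})$... more cleanly: $e_{n-2}(\bfx\setminus x_i)/e_{n-1}(\bfx) = (\sum_{j\ne i} \prod_{k\ne i,j} x_k)/(\sum_j \prod_{k\ne j} x_k)$, and dividing numerator and denominator by $\prod_k x_k$ gives $(\sum_{j\ne i} t_j)/(\sum_j t_j) = (T - t_i)/T$ where $T := \sum_j t_j$. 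So the system becomes simply
\begin{equation*}
    T - t_i = u_i T, \qquad i = 1,\ldots,n,
\end{equation*}
which looks linear but is not, because $T = \sum t_j$ couples the unknowns; summing all $n$ equations gives $nT - T = T\sum u_i$, i.e. $(n - 1 - \sum u_i)T = 0$, so for generic $\bfu$ we get $T = 0$ — but then $t_i = -u_i T = 0$ for all $i$, the zero solution, which is excluded. This means the "naive" count on the torus is zero, and all the honest solutions of the original homogeneous problem live where the substitution degenerates; I will need to be more careful and work projectively with the actual homogeneous equations rather than this rational reduction, or better, track solutions at the boundary.

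**Reframing via the homogeneous structure.** The cleaner approach: the $n$ defining equations $F_i := e_{n-2}(\bfx\setminus x_i) - u_i\, e_{n-1}(\bfx)$ are homogeneous of degree $n-1$ in $\PP^{n-1}$ (note: $n$ variables, so $\PP^{n-1}$, not $\PP^n$ — the map in \Cref{thm: cycle dense} as written on $\PP^n$ must involve an $(n+1)$st homogenizing variable or there is an index convention; I will align with the paper's convention, taking $P_{C_n}$ in $n$ variables $x_1,\dots,x_n$ so $\nabla P_{C_n}\colon\PP^{n-1}\dashrightarrow\PP^{n-1}$, and the relevant count is its degree as a rational self-map). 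Bézout would give $(n-1)^{n-1}$ as a crude upper bound, but the base locus of $\nabla P_{C_n}$ is large and must be subtracted off. The plan is therefore: (1) identify the base locus $B = V(\partial_1 P_{C_n},\ldots,\partial_n P_{C_n}) \subset \PP^{n-1}$ of the gradient map; (2) compute, for each irreducible component $Z \subseteq B$, its contribution (the local multiplicity of intersection of the $n$ generic hyperplane-pullbacks $V(F_i)$ along $Z$), e.g. via the formula $\deg(\nabla P_{C_n}) = (n-1)^{n-1} - \sum_{Z\subseteq B} m_Z \deg Z$ coming from the refined Bézout / blow-up of the base locus; (3) sum these up and check the total equals the $(n-1)$st Eulerian number $2^{n-2}-(n-1)$.

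**Identifying and analyzing the base locus.** The base locus of $\nabla P_{C_n}$ is where all elementary symmetric polynomials $e_{n-2}(\bfx\setminus x_i)$ vanish simultaneously. A clean way to see the structure: $\partial_i P_{C_n} = \partial_i e_{n-1}$, and for any $S\subseteq[n]$ the condition "$x_j = 0$ for exactly $j \in S$" makes $\partial_i e_{n-1}$ vanish precisely when $|S|\ge 2$ (if $|S|\ge 2$ every product $\prod_{k\ne i,j}x_k$ in $\partial_i e_{n-1}$ still contains some vanishing variable) — and when $|S| = 0$ or $1$ the gradient is nonzero. Hence $B$ is exactly the union of the coordinate subspaces $L_S := \{x_j = 0 : j\in S\}$ for $S\subseteq[n]$ with $|S| = 2$ (the larger ones are contained in these), i.e. $\binom{n}{2}$ linear subspaces each of codimension $2$ in $\PP^{n-1}$, so dimension $n-3$. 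I expect each such $L_S$ to contribute with multiplicity $1$, and the count should then organize as a sum over subsets: the Eulerian/OEIS A000295 description "subsets of $[n-1]$ with $\ge 2$ elements" strongly suggests setting $x_n$ (or $x_1$) as a distinguished dehomogenizing coordinate and counting solutions by which subset of the remaining $n-1$ variables is "active" — this is a standard combinatorial bijection once the solution set is understood component-by-component. The \textbf{main obstacle} I anticipate is step (2): correctly computing the intersection multiplicity along each $L_S$ (equivalently, resolving the indeterminacy of $\nabla P_{C_n}$ along these codimension-$2$ subspaces by a single blow-up and reading off the contribution), and then showing the alternating-sum bookkeeping telescopes exactly to $2^{n-2}-(n-1)$. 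A robust alternative route — possibly what the authors do — is a direct induction or a generating-function argument: set up the solution count as a sum over $2^{n-1}$ subsets with the two "small" subsets (size $0$ and $1$) removed, by explicitly solving the deformed system $T - t_i = u_i T$ on each stratum of the boundary, matching $2^{n-1} - n = 2^{n-1}-n$ at level $n$ and noting the shift to $n-1$ variables gives $2^{n-2}-(n-1)$. I would first attempt the explicit stratified solve (it is concrete and self-contained), and fall back on the refined-Bézout computation only if the stratification bookkeeping becomes unwieldy.
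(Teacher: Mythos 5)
Your proposal is a plan rather than a proof: neither of the two routes is carried out, and each contains a concrete error at the point where it stops. In the first route, the reduction under $t_i = 1/x_i$ is miscomputed: dividing $\prod_{k \neq i,j} x_k$ by $\prod_k x_k$ gives $t_i t_j$, not $t_j$, so the correct reduced system is $t_i(T - t_i) = u_i T$ (quadratic in each $t_i$), not $T - t_i = u_i T$. Consequently your conclusion that the torus carries no solutions for generic $\bfu$ and that ``all the honest solutions live where the substitution degenerates'' is false; in fact the generic fiber lies entirely in the torus (in the paper's explicit fiber over the all-ones vector every coordinate equals one of two nonzero values $a,b$). In the second route, the base locus is misidentified: at a point where exactly two coordinates $x_p = x_q$ vanish and the rest are nonzero, one has $\partial_p P_{C_n} = \prod_{k \neq p,q} x_k \neq 0$ (the term $j=q$ survives), so the gradient does \emph{not} vanish there. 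The common zero locus of the partials is the union of the codimension-$3$ coordinate subspaces (at least three coordinates must vanish), not the codimension-$2$ ones. Beyond that, the heart of this route --- the excess-intersection contribution of a positive-dimensional base locus to the Bézout number, which for $n=4$ must account for $3^3 - 4 = 23$ --- is exactly the part you defer, and the guess ``multiplicity $1$ per component'' has no justification and does not produce the Eulerian count.

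For comparison, the paper avoids Bézout and base-locus bookkeeping entirely. It treats $\nabla P_{C_n}$ as a holomorphic map, so the degree is the cardinality of the fiber over any regular value, each point counting $+1$. It then takes the all-ones vector as target, solves $\nabla P_{C_n}(\bfx) = \lambda \qq_n$ explicitly --- every solution takes exactly two values, $a$ on a set $I \ni n$ and $b$ on the complementary set $J \subseteq [n-1]$, with $|I| \geq 2$, $|J| \neq 1$, and the ratio forced to be $a/b = (|I|-1)/(1-|J|)$ --- giving a bijection of the fiber with subsets of $[n-1]$ of size at least two, and finally verifies regularity by checking the Hessian is a nonsingular two-block matrix at each fiber point. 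If you want to salvage your approach, the corrected torus system $t_i(T-t_i)=u_iT$ specialized to $u = \lambda^{-1}\qq_n$ essentially reproduces this two-value analysis; as written, however, the argument has no complete step that establishes the count.
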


\begin{proof}
    We begin by noting that $\overline{\nabla P}_G : \PP^{n-1} \setminus V(\partial P_G) \rightarrow \PP^{n-1}$ is a holomorphic map of complex manifolds. Let $f: X \to Y$ be any map of complex manifolds. Recall that a \emph{regular value} of map $f$ is a point $y \in Y$ such that $f$ is non-singular at each point in the fiber of $y$, $f^{-1}(y) \subseteq X$. The degree of $f$ is the number of points in $f^{-1}(y)$, where $y \in Y$ is any regular value of $f$, counted with orientation $\pm 1$. Holomorphic maps preserve oriented angles, so every point in the fiber of a regular value of $\overline{\nabla P}_G$ is counted with positive orientation $+ 1$.

    We claim the all ones vector $\mathbbm{1}_n$ is a regular value of $\overline{\nabla P}_G$, and that its fiber is in bijection with subsets of $[n-1]$ with at least two elements. First, we compute the fiber $\left( \overline{\nabla P}_G \right)^{-1}(\qq_n)$, which is equivalent to solving the system of equations $(\nabla P_{C_n})_i = \lambda$ for $i = 1, \ldots, n$, $\exists \lambda \in \CC^\ast$. We will show that entries of $\bfx \in \left( \overline{\nabla P}_G \right)^{-1} \left( \qq_n \right)$ can take one of two values. Define the following sum for $i < n$, which will appear in each coordinate of $\nabla P_{C_n}$.
    \begin{equation*}
        \label{eqn: g_i defn}
        g_i := \sum_{j : j \neq i, n} \left( \prod_{k : k \neq i, j, n} x_k \right), \hspace{.5cm} i < n.
    \end{equation*}

    Writing $\nabla P_{C_n}$ using the $g_i$'s, we find

    \begin{equation}
        \label{eqn: grad P_G cycle equals ones}
        \left( \nabla P_{C_n} \right)_i = 
        \sum_{j : j \neq i} \left( \prod_{k : k \neq i, j} x_k \right) =
        \begin{cases}
            x_n g_i + x_1 \cdots \widehat{x}_i \cdots \widehat{x}_n & \text{for } i = 1, \ldots, n-1 \\
            x_j g_j + x_1 \cdots \widehat{x}_j \cdots \widehat{x}_n & \text{for } i = n, \text{ } \forall j = 1, \ldots, n-1.
        \end{cases}
    \end{equation}

    It follows from \eqref{eqn: grad P_G cycle equals ones} that for each $i < n$, either $x_i = x_n$ or $g_i = 0$.
    \begin{align*}
        \left( \nabla P_{C_n} \right)_n = \left( \nabla P_{C_n} \right)_i \iff & x_i g_i + x_1 \cdots \widehat{x}_i \cdots \widehat{x}_n = x_n g_i + x_1 \cdots \widehat{x}_i \cdots \widehat{x}_n \\
        \iff & x_i g_i = x_n g_i \\
        \iff & x_n = x_i \text{ or } g_i = 0.
    \end{align*}

    If $g_{i_1} = g_{i_2} = 0$, then 
    \begin{equation*}
        \left( \nabla P_{C_n} \right)_{i_j} = \lambda \iff x_1 \cdots \widehat{x}_{i_j} \cdots \widehat{x}_n = \lambda \iff \frac{x_{i_1}}{x_{i_2}} = 1 \iff x_{i_1} = x_{i_2}.
    \end{equation*}

    Thus, $[n]$ is partitioned into up to two sets on which $x_i$ is constant. The first part of the partition, $I$, consists of the coordinates equal to $x_n$, and the second part, $J$, consists of the coordinates not equal to $x_n$.
    \begin{equation*}
        \label{eqn: partition}
        I := \{ i : x_i = x_n \}, \hspace*{.5cm} J := \{ j : g_j = 0 \text{ and } x_j \neq x_n \} = \{ j : x_j \neq x_n \}.
    \end{equation*}

    By definition, $I$ is not empty. If $|I| = 1$ then $g_j = 0$ for all $j < n$. By adding up $n - 1$ different expressions for $\left( \nabla P_{C_n} \right)_n$ in \eqref{eqn: grad P_G cycle equals ones}, we obtain $(n - 1) \left( \nabla P_{C_n} \right)_n = \left( \nabla P_{C_n} \right)_n + \sum_{j < n} x_j g_j$. When $g_j = 0$ for all $j < n$, this implies $(n - 2)$$\left( \nabla P_{C_n} \right)_n = \sum_{j : j < n} x_j g_j = 0$, which is a contradiction. Therefore, $|I| \geq 2$. 
    
    On the other hand, if $J = \{ j \}$ then $\left( \nabla P_{C_n} \right)_j = (n-1)x_n^{n-2}$, but $\left( \nabla P_{C_n} \right)_i = (n-2)x_j x_n^{n-3} + x_n^{n-2}$. Setting these two equal yields $x_j = x_n$, a contradiction. If $J$ is empty, then $\bfx = \qq_n$, which is easily confirmed to be in $\left( \overline{\nabla P}_{C_n} \right)^{-1}(\qq_n)$.

    Now assume that $2 \leq |I|, |J|$. We use $g_j = 0$ to find the fixed ratio between the $I$ and $J$ coordinates. Let $\bfx_J \in \left( \overline{\nabla P}_{C_n} \right)^{-1}(\qq_n)$ be the point with $(\bfx_J)_i = a$ for all $i \in I$, and $(\bfx_J)_j = b$ for all $j \in J$; we claim that if $\bfx_J \in (\overline{\nabla P}_{C_n})^{-1}(\qq_n)$, then $a, b \neq 0$. 
    Suppose that $x_k = 0$ for some $k \in [n]$. Then by a formula similar to \eqref{eqn: grad P_G cycle equals ones}, $\left( \nabla P_{C_n} \right)_i = x_k g_i + x_1 \cdots \widehat{x}_i \cdots \widehat{x}_k \cdots x_n = x_1 \cdots \widehat{x}_i \cdots \widehat{x}_k \cdots x_n = \lambda$ for all $i \neq k$. But then $\left( \nabla P_{C_n} \right)_k = \sum_{i : i \neq k} x_1 \cdots \widehat{x}_i \cdots \widehat{x}_k \cdots x_n = (n-1) \lambda \neq \lambda$, which contradicts $\overline{\nabla P}_{C_n}(\bfx) = \qq_n$. Thus, $a, b \neq 0$. Now for any $j \in J$,
    \begin{align*}
        g_j = 0 \iff & \left( |I|  - 1 \right) a^{|I| - 2} b^{|J| - 1} + \left( |J| - 1 \right) a^{|I| - 1} b^{|J| - 2} = 0 \\
        \iff & \left( |I|  - 1 \right) b + \left( |J| - 1 \right) a = 0 \\
        \iff & \frac{a}{b} = \frac{|I| - 1}{1 - |J|} = \frac{|I| - 1}{1 - n + |I|}.
    \end{align*}

    For $n > 2$, the ratio $a/b$ is strictly decreasing as $|I|$ increases but not equal to 1 as long as $|I|, |J| \geq 2$.
    Reversing the steps above, we see that $\bfx \in (\overline{\nabla P}_{C_n})^{-1}(\qq_n)$ if and only if $\bfx = \bfx_J$ with $J = \emptyset$ (in which case $\bfx_J = \qq_n$) or $|J| \geq 2$.
    Therefore, the number of points in the fiber is the number of choices for $J \subset [n-1]$ with the only constraints being that $|J| \neq 1, n-1$. These subsets are in bijection with the subsets of $[n-1]$ of size at least two ($J = \emptyset \leftrightarrow [n-1]$).
    
    Now we prove that $\qq_n$ is a regular value of $\overline{\nabla P}_{C_n}$ by verifying that the Jacobian of $\nabla P_{C_n}: \CC^n \dashrightarrow \CC^n$ (which is the Hessian of $P_{C_n}$) is invertible at each point of the fiber. 
    For a point $\bfx_J = a \qq_I + b \qq_J$, the entries of the Jacobian of $\nabla P_{C_n}$ are
    \begin{align*}
        \label{eqn: jacobian entries}
        \frac{\left( \nabla P_{C_n} \right)_k}{\partial x_\ell}\vert_{\bfx = \bfx_J} = \frac{\partial^2 P_{C_n}}{\partial x_k \partial x_\ell}\vert_{\bfx = \bfx_J} =
        &\begin{cases}
            0 & \text{if } k = \ell, \\
            \left( |I| - 2 \right) a^{|I| - 3} b^{|J|} + |J| a^{|I| - 2} b^{|J| - 1} & \text{if } \left| \{ k, \ell \} \cap I \right| = 2, \\
            |I| a^{|I| - 1} b^{|J| - 2} + \left( |J| - 2 \right) a^{|I|} b^{|J| - 3} & \text{if } \left| \{ k, \ell \} \cap I \right| = 0, \\
            a^{|I| - 2} b^{|J| - 2} \left( \left( |I|  - 1 \right) b + \left( |J| - 1 \right) a \right) & \text{if } \left| \{ k, \ell \} \cap I \right| = 1
        \end{cases} \\
        =&\begin{cases}
            0 & \text{if } k = \ell, \\
            c & \text{if } \left| \{ k, \ell \} \cap I \right| = 2, \\
            d & \text{if } \left| \{ k, \ell \} \cap I \right| = 0, \\
            0 & \text{if } \left| \{ k, \ell \} \cap I \right| = 1.
        \end{cases}
    \end{align*}
    We claim that $c, d \neq 0$. First, if $J = \emptyset$ then only the first two cases above can occur. When $J$ is empty, $c = (|I| - 2) a^{|I| - 3} = n - 2 \neq 0$. Now assume that $|J| \geq 2$. Then we have
    $$c = 0 \iff a^{|I| - 3} b^{|J| - 1} ((|I| - 2) b + |J| a) = 0 \iff \frac{a}{b} = \frac{2 - |I|}{|J|} \iff \frac{|I| - 1}{1 - |J|} = \frac{2 - |I|}{|J|} \iff 2 = |I| + |J|.$$
    Since $|I|, |J| \geq 2$, the equivalences above prove that $c \neq 0$. Similarly, for $d$ we have
    $$d = 0 \iff a^{|I| - 1} b^{|J| - 3} (|I| b + (|J| - 2) a) = 0 \iff \frac{b}{a} = \frac{2 - |J|}{|I|} \iff \frac{1 - |J|}{|I| - 1} = \frac{2 - |J|}{|I|} \iff |I| + |J| = 2.$$
    Again, it is impossible to have $|I| + |J| = 2$ since $|I|, |J| \geq 2$, so it follows that $d \neq 0$.
    
    By rearranging the rows and columns, the Jacobian of $\nabla P_{C_n}$ at $\bfx_J$ is of the following form:
    \begin{center}
        \bordermatrix{
            & \textcolor{blue}{I} & \textcolor{blue}{J} \cr
            \textcolor{blue}{I} & \begin{array}{cccc}
                0 & c & \cdots & c \\
                c & 0 & & \vdots \\
                \vdots & & \ddots & c \\
                c & \cdots & c & 0
            \end{array} & \HugeZero \cr
            \textcolor{blue}{J} & \HugeZero & \begin{array}{cccc}
                0 & d & \cdots & d \\
                d & 0 & & \vdots \\
                \vdots & & \ddots & d \\
                d & \cdots & d & 0
            \end{array} \cr }
    \end{center}

    If $J$ is non-empty, the $II$ block and $JJ$ block have determinant $(-1)^{|I| - 1} (|I| - 1) c^{|I|}$ and $(-1)^{|J| - 1} (|J| - 1) d^{|J|}$ respectively. If $J$ is empty, then the determinant is $(-1)^{n - 1} (n - 1) c^{n-1} \neq 0$. It follows that the matrix has non-zero determinant, so $\qq_n$ is a regular value of $\overline{\nabla P}_{C_n}$. We conclude that the degree of $\overline{\nabla P}_{C_n}$ is the $(n-1)$st Eulerian number.
\end{proof}

\begin{proof}[Proof of \Cref{thm: cycle ml degree}]
    In \Cref{thm: cycle dense}, we proved that $\nabla P_{C_n}$ is dominant, so \Cref{lem: ML deg is deg} applies and the ML degree of $\mathcal{M}(C_n, k)$ is equal to the degree of $\nabla P_{C_n}$ as a map of complex manifolds. In \Cref{thm: cycle degree} we prove that the degree of $\nabla P_{C_n}$ is the $(n-1)$st Eulerian number, so we conclude that the ML degree of $\mathcal{M}(C_n, k)$ is the $(n-1)$st Eulerian number.
 \end{proof}

For $n \geq 4$, $C_n$ is not chordal, and our theorem confirms that the ML degree for the models corresponding to these graphs is greater than one. This fact, along with our computations, provides evidence for the conjecture that for a non-chordal graph $G$, $P_G$ is not homaloidal.

\begin{conjecture}
    \label{conj: ml deg one iff chordal}
    Let $G$ be an undirected graph. Then $P_G$, the spanning tree generating function corresponding to $G$, is homaloidal if and only if $G$ is chordal.
\end{conjecture}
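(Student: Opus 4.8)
One direction---that $G$ chordal implies $P_G$ homaloidal---is precisely \Cref{thm: chordal implies ML degree one}, so the open content of \Cref{conj: ml deg one iff chordal} is the converse: if $G$ is not chordal, then $P_G$ is \emph{not} homaloidal. By \Cref{prop: homaloidal to det} and \Cref{prop: homaloidal iff mld 1} this is the same as showing that $\mathcal M(G,k)$ does not have ML degree one, i.e.\ that $\nabla P_G$ is not birational; when $\nabla P_G$ is dominant this amounts to $\deg\nabla P_G\ge 2$. By \Cref{defn: chordal graph} a non-chordal $G$ contains a chordless cycle $C$ on a vertex set $W$ with $m\coloneqq|W|\ge 4$, so it suffices to prove: if $G$ has $C_m$ ($m\ge 4$) as an induced subgraph, then $\nabla P_G$ is not birational. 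The base case $G=C_m$ is \Cref{thm: cycle degree}, which gives $\deg\nabla P_{C_m}=2^{m-1}-m\ge 4$.

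The plan is then to attempt an induction on $|E(G)|-m$. The easy part of the step is a \emph{pendant-vertex reduction}: if $G$ has a vertex $v\notin W$ of degree one with pendant edge $e$, then every spanning tree of $G$ uses $e$, so $P_G=x_e\cdot P_{G-v}$ with $x_e$ a variable not occurring in $P_{G-v}$, and $G-v$ still contains $C$ as an induced subgraph. This is controlled by the following lemma, which I expect to prove by a direct count of the fiber of the gradient map in the style of \Cref{sec: examples of homaloidals}: \emph{if $Q$ is homogeneous with dominant gradient map and $x_0$ is a variable not occurring in $Q$, then $\deg\nabla(x_0 Q)=\deg\nabla Q$; in particular $x_0 Q$ is homaloidal if and only if $Q$ is.} With the induction hypothesis this covers all graphs obtained from $C_m$ by attaching pendant trees. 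For orientation: specializing the weights of all edges outside $C$ to generic constants $c$, the polynomial $P_G(c,\bfx_{E(C)})$, read in the cycle variables alone, has top-degree part $w(c)\cdot P_{C_m}(\bfx_{E(C)})$, where $w(c)\neq 0$ is the spanning-tree count of the graph obtained from $G$ by contracting $W$ to a point (a spanning tree of $G$ uses at most $m-1$ of the $m$ cycle edges, and any $m-1$ of them already span $W$). The chordal graph ``$C_4$ with one added chord'', however, where $P_G=P_{C_4}+x_f(x_a+x_b)(x_c+x_d)$ has leading form $P_{C_4}$ yet is homaloidal, warns that the leading form alone does not decide homaloidality: any degeneration argument must exploit the \emph{chordlessness} of $C$, not merely its presence.

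For a graph with an induced $\ge 4$-cycle but no pendant vertex outside $W$ one is forced to remove a non-cycle edge $e$ and use deletion--contraction, $P_G=P_{G-e}+x_e\,P_{G/e}$, with $e$ chosen so that $G-e$ and $G/e$ still have an induced $\ge 4$-cycle. \textbf{The main obstacle} is exactly here: $\deg\nabla$ is not additive, so non-homaloidality of $P_{G-e}$ and $P_{G/e}$ does not give non-homaloidality of $P_G$. What one wants is to transfer the (at least $2$)-point generic fiber of $\nabla P_{C_m}$ to $\nabla P_G$---for instance by a Puiseux analysis of the solutions of $\nabla P_G(\bfx)=P_G(\bfx)\,\bfu$ as $\bfu$ degenerates so that its $E(C)$-coordinates dominate, exhibiting at least two solution branches that escape along the cycle directions with leading terms prescribed by the cycle. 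Carrying this out for \emph{generic} $\bfu$, and ruling out collisions of branches or branches running into the indeterminacy locus of $\nabla P_G$, is the unresolved point; the chordlessness of $C$ should enter through the zero-block structure it forces on the $W\times W$ part of $L(G)_{\widehat{k},\widehat{k}}$, playing a role reminiscent of \Cref{lemma: zero block inverse} in the proof of \Cref{thm: chordal implies ML degree one}, but now used to produce extra solutions rather than to pin down a unique one.

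A more structural alternative is matroid-theoretic: $P_G$ is the basis generating polynomial of the graphic matroid $M(G)$, and $M(C_m)$ occurs as a minor of $M(G)$ whenever $G$ has $C_m$ as a topological minor. A criterion for homaloidality of basis generating polynomials that is monotone under matroid deletion and contraction would subsume both the pendant reduction and the chordality dichotomy; but it presupposes understanding how $\deg\nabla$ of a basis generating polynomial behaves under matroid minors, which is precisely what is missing.
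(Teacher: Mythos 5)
First, be aware that the statement you are trying to prove is stated in the paper as a \emph{conjecture}: the paper does not prove it, and explicitly identifies what is missing, namely a monotonicity statement that for any $S\subseteq V$ and $k\in S$ the ML degree of $\mathcal{M}(G,k)$ is bounded below by that of $\mathcal{M}(G[S],k)$ (the analogue of the induced-subgraph bound of \cite{sturmfelsUhler} for Gaussian graphical models). Your proposal is, in outline, the same strategy the paper envisions: the chordal direction is \Cref{thm: chordal implies ML degree one}, the converse reduces to showing that an induced chordless cycle $C_m$, $m\ge 4$, forces ML degree greater than one, and the quantitative input about cycles is \Cref{thm: cycle degree}/\Cref{thm: cycle ml degree}. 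To your credit you state clearly where the argument stops being a proof, and your diagnosis of the obstacle is accurate.

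The genuine gap is exactly the step you flag: transferring the $(2^{m-1}-m)$-point generic fiber of $\nabla P_{C_m}$ to a lower bound of at least $2$ on the generic fiber of $\nabla P_G$. Your pendant-vertex lemma (if proved) only handles graphs obtained from the cycle by attaching trees, and your deletion--contraction step has no mechanism to control $\deg\nabla$ of $P_G = P_{G-e}+x_e P_{G/e}$ in terms of the two pieces; as you note, $\deg\nabla$ is not additive, and your own example ($C_4$ plus a chord, whose leading form in the cycle variables is still $P_{C_4}$ yet which is homaloidal) shows that any degeneration or leading-term argument must genuinely use chordlessness of $C$ inside $G$, not just the presence of $C_m$ as a subgraph or topological minor. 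In particular your matroid-minor alternative is suspect for the same reason: $M(C_m)$ is a minor of $M(G)$ for many chordal $G$ (e.g.\ $C_4$ plus a chord has a $C_4$-minor after deleting the chord), so no criterion monotone under arbitrary deletion and contraction can distinguish chordal from non-chordal graphs; any such criterion would have to be restricted to induced subgraphs (deletion of vertices, not edges), which is precisely the unproven $\mld$ monotonicity above. The Puiseux/degeneration sketch for generic $\bfu$ is plausible as a research direction, but as written it neither rules out branch collisions nor branches escaping into the indeterminacy locus of $\nabla P_G$, so the proposal does not constitute a proof of \Cref{conj: ml deg one iff chordal}; it remains open, in the paper and in your write-up alike.
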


The proof of \Cref{conj: ml deg one iff chordal} can be completed by showing that for any $S\subseteq V$ and any $k\in S$, the ML degree of $\mathcal{M}(G,k)$ is bounded below by the ML degree of $\mathcal{M}(G[S],k)$. In \cite{sturmfelsUhler}, it is shown that the ML degree of the Gaussian graphical model of $G[S]$ is a lower bound for the ML degree of the Gaussian graphical model of G. If proved, this conjecture, along with \Cref{lemma: our models include grpahical models}, would provide another proof for \cite[Theorem 4.3]{sturmfelsUhler}, where it is stated that a Gaussian graphical model corresponding to an undirected graph $G$ has ML degree one if and only if $G$ is chordal.

\subsection{Equations for the Covariance Model}

For a given undirected connected graph $G=(V,E)$, although the ML degree of $\mathcal{M}(G,k)$ is the same for any $k \in V$, the models $\mathcal{M}(G,k)$ might be different depending on the value of $k$. 
For instance, in the specific case where $k$ is connected to every other vertex in $G$, the concentration model $\mathcal{M}(G, k)$ is precisely the Gaussian concentration model for the graph $G[V \setminus \{ k \}]$ (\Cref{lemma: our models include grpahical models}). In the Gaussian graphical models, the (global Markov) conditional independence statements are obtained from the zeros of the concentration matrix. In \cite[Theorem 4.7]{pratikPreprint}, it is shown that the defining equations of the model satisfy the relation 
\begin{eqnarray}\label{eqnarray:graphical saturation}
I_G = CI_G : \langle \det(\Sigma)\rangle ^{\infty},
\end{eqnarray}
where $I_G$ and $CI_G$ are the vanishing ideal and conditional independence ideal (global Markov properties) of the graphical model, respectively. In the next theorem, we give a generalization of \eqref{eqnarray:graphical saturation} by showing a similar algebraic connection between the vanishing ideal $I(\mathcal{M}(G,k)^{-1})$ and the zeros appearing in the $k$th row and column of $L(G)$. In this subsection, when $G$ is clear from context, we denote $L(G)$ by $L = (l_{ij})_{1 \leq i, j \leq n}$.

\begin{theorem}
\label{thm: covariance ideal equations}
Let $G$ be a graph with $n$ vertices and $G'$ be the subgraph of $G$ obtained after removing the vertex $k$. Let $\Sigma$ be a symmetric matrix of variables indexed by the vertices of $G^\prime$, and
$\mathcal{R}_k$ be the set of polynomials in $\Sigma$ given by
\[
\mathcal{R}_k=\left\{\det \Sigma_{\widehat{i}, \widehat{j}} \mathrel{\big|} (i, j) \notin E, \text{ and } i,j\neq k \right\} \cup \left\{ \sum_i (-1)^{i+j} \det \Sigma_{\widehat{i}, \widehat{j}} \mathrel{\Big|} (j, k) \notin E \right\}.
\] 
Then the vanishing ideal of the model $I(\mathcal{M}(G,k)^{-1})$ is equal to $\langle \mathcal{R}_k \rangle : \langle \det(\Sigma)\rangle^{\infty}$.
\end{theorem}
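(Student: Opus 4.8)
The plan is to identify $\mathcal{M}(G,k)^{-1}$ with a linear section of the cone of inverses of a nice family of matrices, and then use the standard adjugate/Cramer dictionary to translate the vanishing conditions on $L(G)_{\widehat k,\widehat k}$ into polynomial conditions on $\Sigma$. Concretely, if $\Sigma \in \mathcal{M}(G,k)^{-1}$ is invertible, then $\Sigma^{-1} = L(G)_{\widehat k, \widehat k}(\bfx_E)$ for some edge parameters, and by Cramer's rule $(\Sigma^{-1})_{ij} = \tfrac{1}{\det\Sigma}(-1)^{i+j}\det\Sigma_{\widehat j,\widehat i}$. The entries of $L(G)_{\widehat k,\widehat k}$ that must vanish are exactly $l_{ij}$ for $i\neq j$ with $(i,j)\notin E$ (and $i,j\neq k$); each such vanishing gives $\det\Sigma_{\widehat i,\widehat j}=0$ after clearing $\det\Sigma$, which accounts for the first family in $\mathcal{R}_k$. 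The second family comes from the row-sum relations of the Laplacian: for each $j\neq k$, the full Laplacian row satisfies $\sum_i l_{ij} = 0$, so $\sum_{i\neq k} l_{ij} = -l_{jk}$, and when $(j,k)\notin E$ this reads $\sum_{i\neq k} l_{ij} = 0$; translating via Cramer gives $\sum_i (-1)^{i+j}\det\Sigma_{\widehat i,\widehat j}=0$ (here the index $i$ in the statement runs over the rows of $\Sigma$, i.e. over $[n]\setminus\{k\}$ re-indexed). So on the open set $\det\Sigma\neq 0$, the points of $\mathcal{M}(G,k)^{-1}$ are precisely the common zeros of $\mathcal{R}_k$.

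The first step is to make this precise: show that an invertible $\Sigma$ lies in $\mathcal{M}(G,k)^{-1}$ if and only if $\Sigma^{-1}$ has the Laplacian-submatrix shape, i.e. its off-diagonal $(i,j)$ entry vanishes whenever $(i,j)\notin E$ and each of its rows sums to the appropriate value, namely row $j$ sums to $l_{jk}$, which is forced to be $0$ exactly when $(j,k)\notin E$ while being a free parameter otherwise. This is essentially a repackaging of the definition of $\mathcal{M}(G,k)$ together with the structure of $L(G)$: the free parameters $x_e$ for $e$ incident to $k$ become exactly the unconstrained row sums, and the remaining entries of $\Sigma^{-1}$ are unconstrained up to symmetry and the stated vanishing. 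The second step is the adjugate translation: $\det\Sigma\cdot(\Sigma^{-1})_{ij} = (-1)^{i+j}\det\Sigma_{\widehat j,\widehat i} = (-1)^{i+j}\det\Sigma_{\widehat i,\widehat j}$ by symmetry of $\Sigma$, so each entry-vanishing condition and each row-sum condition on $\Sigma^{-1}$ becomes, after multiplying by $\det\Sigma$, exactly a generator of $\langle\mathcal{R}_k\rangle$. This shows the variety $V(\mathcal{R}_k)\cap\{\det\Sigma\neq 0\}$ equals $\mathcal{M}(G,k)^{-1}\cap\{\det\Sigma\neq 0\}$.

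The third and final step is the saturation bookkeeping. By the definition of $\mathcal{M}(G,k)^{-1}$ as a Zariski closure, $I(\mathcal{M}(G,k)^{-1})$ is a prime ideal, and the locus where $\det\Sigma\neq 0$ is dense in it (a generic choice of parameters $\bfx_E$ gives an invertible matrix, since $\det L(G)_{\widehat k,\widehat k} = P_G$ is not identically zero for connected $G$). Since $\langle\mathcal{R}_k\rangle$ and $I(\mathcal{M}(G,k)^{-1})$ cut out the same set away from $\det\Sigma=0$, and $I(\mathcal{M}(G,k)^{-1})$ is prime with $\det\Sigma\notin I(\mathcal{M}(G,k)^{-1})$, the standard fact that $I(\overline{W\setminus V(g)}) = I(W):g^\infty$ for $W = V(\mathcal{R}_k)$ gives $I(\mathcal{M}(G,k)^{-1}) = \langle\mathcal{R}_k\rangle : \langle\det\Sigma\rangle^\infty$ — provided we know $\langle\mathcal{R}_k\rangle : \langle\det\Sigma\rangle^\infty$ is exactly the vanishing ideal of the closure of $V(\mathcal{R}_k)\setminus V(\det\Sigma)$, which holds because saturation by $\det\Sigma$ removes precisely the components of $V(\mathcal{R}_k)$ contained in $V(\det\Sigma)$ and then one must check the remaining piece is irreducible and reduced — equivalently, that it coincides with the (irreducible, reduced) model.

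The main obstacle I anticipate is the last point: controlling the components of $V(\mathcal{R}_k)$ that lie inside $\{\det\Sigma = 0\}$. It is conceivable that $V(\mathcal{R}_k)$ has extra components, or is non-reduced, on the $\det\Sigma=0$ locus; the saturation is exactly the tool that discards these, but one must argue that what survives is irreducible and matches $\mathcal{M}(G,k)^{-1}$ rather than being a larger variety. I expect this is handled by noting $\mathcal{M}(G,k)^{-1}$ is the image closure of an affine space under a rational (hence irreducible-image) map, so it is irreducible, and $\langle\mathcal{R}_k\rangle : \langle\det\Sigma\rangle^\infty$ is the intersection of those primary components of $\langle\mathcal{R}_k\rangle$ not containing $\det\Sigma$; the agreement of the two sets off $V(\det\Sigma)$ then forces these to be a single prime, namely $I(\mathcal{M}(G,k)^{-1})$, by comparing dimensions or by a direct containment argument in both directions. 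A secondary, purely clerical obstacle is keeping the index relabelling straight: deleting vertex $k$ shifts indices, and the signs $(-1)^{i+j}$ in $\mathcal{R}_k$ must be matched to the signs produced by the cofactor expansion after this relabelling.
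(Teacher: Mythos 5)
Your first two steps (the Cramer/adjugate dictionary identifying the invertible points of $V(\mathcal{R}_k)$ with the invertible points of the model) are sound and are essentially the same translation the paper performs via the pullback $\rho^\ast$, under which the generators of $\langle\mathcal{R}_k\rangle$ map to the linear constraints cutting out $\mathcal{M}(G,k)$. The genuine gap is in your third step. From the set-theoretic agreement off $V(\det\Sigma)$ you only get $V\bigl(\langle\mathcal{R}_k\rangle:\langle\det\Sigma\rangle^\infty\bigr)=\mathcal{M}(G,k)^{-1}$, hence by the Nullstellensatz the equality of \emph{radicals}: for $f\in I(\mathcal{M}(G,k)^{-1})$ you obtain $\det(\Sigma)^N f^m\in\langle\mathcal{R}_k\rangle$ for some $m$, whereas the theorem asserts $\det(\Sigma)^N f\in\langle\mathcal{R}_k\rangle$. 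Your proposed repair — that agreement of the sets ``forces these to be a single prime, by comparing dimensions or a direct containment'' — does not work: the saturation could retain a non-reduced primary component supported exactly on the model (or embedded components inside it); such a component has the right dimension and the right zero set, yet would make $\langle\mathcal{R}_k\rangle:\langle\det\Sigma\rangle^\infty$ strictly smaller than the prime vanishing ideal. What is needed, and what you do not supply, is an argument that $\langle\mathcal{R}_k\rangle$ becomes prime (or at least radical) after inverting $\det\Sigma$.

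The paper closes exactly this point by working with the isomorphism of localized rings induced by matrix inversion, $\det(L_{\widehat{k},\widehat{k}})^{-1}\mathbb{C}[l]\leftrightarrow\det(\Sigma)^{-1}\mathbb{C}[\sigma]$. Since the ideal $\langle\mathcal{P}_k\rangle$ of linear constraints on $L_{\widehat{k},\widehat{k}}$ is prime, the kernel of the induced map $\rho^\ast|_{\mathcal{P}_k}$ (which is $I(\mathcal{M}(G,k)^{-1})$) is prime, and for any $f$ in it one writes $\rho^\ast(f)=\sum_i f_i p_i$ and applies the inverse ring map: each $p_i$ pulls back to $r_i/\det(\Sigma)$, so clearing denominators gives $\det(\Sigma)^m f=\sum_i s_i r_i\in\langle\mathcal{R}_k\rangle$ with $f$ itself (not a power of $f$). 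That ideal-theoretic step, carried out at the level of the localization rather than at the level of zero sets, is the missing ingredient in your proposal; the rest of your outline (the easy inclusion $\langle\mathcal{R}_k\rangle:\langle\det\Sigma\rangle^\infty\subseteq I(\mathcal{M}(G,k)^{-1})$, density of the invertible locus, and the index/sign bookkeeping) is fine.
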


\begin{proof}
Without loss of generality, let $k = n$. Define the following set of polynomials: 
\begin{equation*}
    \mathcal{P}_n := \left\{l_{ij} \mid (i,j) \notin E, \text{ and } i,j \neq n \right\} \cup \left\{\sum_i l_{ij} \mathrel{\Big|} (j,n) \notin E \right\}.
\end{equation*}

The first set above corresponds to the zeroes in $L_{\widehat{n}, \widehat{n}}$, and the second set corresponds to the deleted zeroes in the $n$th row and column.
We define a rational map $\rho: \mathbb{C}^{n-1 \times n-1}\dasharrow \mathbb{C}^{n-1 \times n-1}$ as
\[
\rho(L_{\widehat{n},\widehat{n}})=\rho(l_{11},l_{12},\ldots,l_{n-1n-1})=(\rho_{11}(L_{\widehat{n},\widehat{n}}),\rho_{12}(L_{\widehat{n},\widehat{n}}),\ldots, \rho_{n-1n-1}(L_{\widehat{n},\widehat{n}})),
\]
where $\rho_{ij}(L_{\widehat{n},\widehat{n}})$ denotes the $(i,j)$ coordinate of $\left(L_{\widehat{n},\widehat{n}}\right)^{-1}$. The pullback of $\rho$ can be written as 
\begin{eqnarray*}
\rho^*:\mathbb{C}[\sigma_{11},\sigma_{12},\ldots,\sigma_{n-1n-1}] &\rightarrow & \det(L_{\widehat{n},\widehat{n}})^{-1} \mathbb{C}[l_{11},l_{12},\ldots, l_{n-1n-1}] \\
\sigma_{ij} & \mapsto & \rho_{ij}(L_{\widehat{n},\widehat{n}}),
\end{eqnarray*}
where $\det(L_{\widehat{n},\widehat{n}})^{-1} \mathbb{C}[l_{11},l_{12},\ldots,l_{n-1n-1}]$ is the localization of the polynomial ring $\mathbb{C}[l_{11},l_{12},\ldots, l_{n-1n-1}]$ at the determinant of $L_{\widehat{n},\widehat{n}}$. 
(Note: We can change the indices of $\sigma_{ij}$ accordingly when $k \neq n$.)
Now, observe that the space $\mathcal{M}(G,n)$ is defined by setting the polynomials in $\mathcal{P}_n$ to zero, which forms a linear space in $\mathbb{C}^{n-1 \times n-1}$. Thus, the ideal $\langle \mathcal{P}_n \rangle$ is a prime ideal and is precisely the vanishing ideal of the set $\mathcal{M}(G,n)$. This allows us to define the restriction map $\rho|_{\mathcal{P}_n}:\mathcal{M}(G,n) \dasharrow \mathcal{M}(G,n)^{-1}$ which parameterizes our model. The pullback $\rho^*|_{\mathcal{P}_n}$ is
\begin{eqnarray*}
\rho^*|_{\mathcal{P}_n}: \mathbb{C}[\sigma_{11},\sigma_{12},\ldots, \sigma_{n-1n-1}] & \rightarrow & \det(L_{\widehat{n},\widehat{n}})^{-1} \mathbb{C}[l_{11},l_{12},\ldots, l_{n-1n-1}]/\langle \mathcal{P}_n \rangle \\
\sigma_{ij} & \mapsto &\rho_{ij}(L_{\widehat{n},\widehat{n}})+ \langle \mathcal{P}_n \rangle. 
\end{eqnarray*}
The kernel of $\rho^*|_{\mathcal{P}_n}$ is precisely the vanishing ideal of the model. Further, any polynomial $f$ lies in the kernel of $\rho^*|_{\mathcal{P}_n}$ if and only if $\rho^*(f)$ lies in $\langle \mathcal{P}_n \rangle$. Notice that as $\langle \mathcal{P}_n \rangle$ is generated by linear polynomials (and hence is a prime ideal), $\det(L_{\widehat{n},\widehat{n}})^{-1} \mathbb{C}[l_{11},l_{12},\ldots, l_{n-1n-1}]/\langle \mathcal{P}_n \rangle$ forms an integral domain and thus $\ker \rho^*|_{\mathcal{P}_n}$ is prime. 
The ideal $\langle \mathcal{R}_n \rangle$ lies in $\ker(\rho^*|_{\mathcal{P}_n})$ as 
the first set of generators of $\mathcal{R}_k$ gets mapped to $l_{ij}$ i.e., the first set of $\mathcal{P}_n$, while the second set gets mapped to $\sum_{i}{l_{ij}}$, the second set of $\mathcal{P}_n$.
In addition, as $\det(\Sigma)$ does not lie in $\ker(\rho^*|_{\mathcal{P}_n})$,
\[
\langle \mathcal{R}_n \rangle : \langle \det(\Sigma)^{\infty} \rangle \subseteq \ker(\rho^*|_{\mathcal{P}_n}): \langle \det(\Sigma)^{\infty} \rangle =  \ker(\rho^*|_{\mathcal{P}_n}). 
\]

To prove the reverse inclusion, we define the inverse of $\rho^*$ as
\begin{eqnarray*}
\left( \rho^\ast \right)^{-1}: \det(L_{\widehat{n},\widehat{n}})^{-1}\mathbb{C}[l_{11},l_{12},\ldots,l_{n-1n-1}] & \rightarrow & \det(\Sigma)^{-1} \mathbb{C}[\sigma_{11},\sigma_{12},\ldots,\sigma_{n-1n-1}] \\
l_{ij} & \mapsto & (i,j) \text{ coordinate of } \Sigma^{-1} \\
1/\det(L_{\widehat{n},\widehat{n}}) & \mapsto & \det(\Sigma).
\end{eqnarray*}
Now, for any $f\in \ker(\rho^*|_{\mathcal{P}_n})$, $\rho^*(f)$ lies in $\langle \mathcal{P}_n \rangle$. This means $\rho^*(f)$ is equal to $\sum_i f_i p_i$, where $p_i$ are the generators of $\langle \mathcal{P}_n \rangle$. Thus, we have
\[
f= \left( \rho^\ast \right)^{-1} \circ \rho^*(f) =  \left( \rho^\ast \right)^{-1} \left( \sum_i f_i p_i \right) = \sum_i \left( \rho^\ast \right)^{-1}(f_i)\left( \rho^\ast \right)^{-1}(p_i).
\]
If the polynomials $r_i$ are the generators of $\langle \mathcal{R}_n \rangle$, then observe that $\left( \rho^\ast \right)^{-1}(p_i)$ are precisely mapped to $r_i/\det(\Sigma)$. Thus, multiplying the equation above by enough powers of $\det(\Sigma)$ gives us that
\[
\det(\Sigma)^m f = \sum_i s_i r_i \in \langle \mathcal{R}_n \rangle,
\]
implying that $f$ lies in $\langle \mathcal{R}_n \rangle : \langle \det(\Sigma)^{\infty} \rangle$.
\end{proof}

In the next proposition, we display some polynomials which always lie in the vanishing ideal of the model, as they follow from the rank constraints of $L_{\widehat{k}, \widehat{k}}$ and $\Sigma$.
\begin{prop}\label{lemma:generators}
Let $G$ be a graph with $n$ vertices and $G'$ be the subgraph of $G$ obtained after removing the $k$th vertex. Let $\Sigma$ be a symmetric matrix of variables indexed by the vertices of $G^\prime$. 
\begin{enumerate}
\item The vanishing ideal (and the conditional independence ideal) of the Gaussian graphical model for $G'$ is contained in $I(\mathcal{M}(G,k)^{-1})$.
\item Let $\nei(k)$ denote the neighbors of $k$, i.e., the vertices that are adjacent to $k$. If $\#\nei(k)< n-1$, then \[
rank\begin{pmatrix}
    & &\Sigma_{\nei(k), \widehat{\emptyset}} & & \\
    1 & 1& \ldots & 1
\end{pmatrix} \leq \#\nei(k),
\]
i.e., the $(\#\nei(k) + 1) \times (\#\nei(k) + 1)$ minors of the above matrix vanish on $\mathcal{M}(G,k)^{-1}$.

\item Let the $(i,j)$ coordinate of $L$ be zero with $i,j \neq k$. If $i$ is not a neighbor of $k$, then
\[
rank \begin{pmatrix}
    & & \Sigma_{\widehat{i},\widehat{j}} & & \\
    1 & 1 & \ldots & 1
\end{pmatrix} \leq n-3,
\]
i.e., the $ (n-2) \times (n-2)$ minors of the above matrix vanish on $\mathcal{M}(G,k)^{-1}$.
\end{enumerate}
\end{prop}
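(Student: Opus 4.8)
The plan is to reduce all three parts to one identity about the all-ones vector. We may assume $G$ is connected (otherwise the generic concentration matrix $K = L(G)_{\widehat k,\widehat k}(\mathbf x_E)$ is singular, the covariance model is empty, and every statement is vacuous); set $\Sigma = K^{-1}$. Since the full Laplacian satisfies $L(G)\mathbf 1_n = 0$, separating off the $k$-th row and column gives the block identity $K\mathbf 1_{n-1} = -L(G)_{\widehat k,k}$, and the $i$-th entry of $L(G)_{\widehat k,k}$ is $-x_{ik}$ when $i\in\nei(k)$ and $0$ otherwise. Hence $v := K\mathbf 1_{n-1}$ is supported on $\nei(k)$, and multiplying by $\Sigma$ yields $\mathbf 1_{n-1} = \Sigma v$, a linear combination of the columns of $\Sigma$ indexed by $\nei(k)$; by symmetry of $\Sigma$, the row $\mathbf 1_{n-1}^{T}$ is the same combination of the rows of $\Sigma$ indexed by $\nei(k)$. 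This holds on the dense locus of $\mathcal M(G,k)^{-1}$ where $\Sigma$ is invertible, and since every conclusion below is a vanishing of minors (polynomials in the entries of $\Sigma$), it suffices to prove it there and pass to the Zariski closure.

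For part (1), the same computation identifies $\mathcal M(G,k)$ with the linear subspace of the graphical concentration space $\mathcal L_{G'}$ cut out by the conditions ``the $i$-th row of $K$ sums to zero'' for all $i\notin\nei(k)$ (when $i\in\nei(k)$ the row sum is the independent parameter $x_{ik}$; this refines \Cref{lemma: our models include grpahical models}). In particular $\mathcal M(G,k)\subseteq\mathcal L_{G'}$, so $\mathcal M(G,k)^{-1}$ is a subvariety of the covariance model of the Gaussian graphical model of $G'$, and hence $I(\mathcal M(G,k)^{-1})\supseteq I_{G'}$. The parenthetical statement follows because $CI_{G'}\subseteq I_{G'}$, e.g.\ since $I_{G'} = CI_{G'} : \langle \det\Sigma\rangle^{\infty}$ by \eqref{eqnarray:graphical saturation}.

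For part (2), the identity above already places $\mathbf 1_{n-1}^{T}$ in the row span of $\Sigma_{\nei(k),\widehat{\emptyset}}$, so adjoining it as an extra row cannot raise the rank above $\#\nei(k)$; the hypothesis $\#\nei(k) < n-1$ is precisely what turns this into the nontrivial assertion that the maximal $(\#\nei(k)+1)\times(\#\nei(k)+1)$ minors vanish. For part (3), I would combine two facts. Since $i\notin\nei(k)$, the $i$-th row of $\Sigma$ does not occur in the expansion of $\mathbf 1_{n-1}^{T}$ as a combination of rows indexed by $\nei(k)$; restricting to the columns other than $j$ then shows $\mathbf 1^{T}$ lies in the row span of $\Sigma_{\widehat i,\widehat j}$, so adjoining it as a row does not raise the rank. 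Separately, $l_{ij} = 0$ with $i,j\neq k$ forces $K_{ij} = (\Sigma^{-1})_{ij} = 0$, and the cofactor formula for the inverse, together with symmetry of $\Sigma$, gives $(\Sigma^{-1})_{ij} = \pm\,\det\Sigma_{\widehat i,\widehat j}/\det\Sigma$, so $\det\Sigma_{\widehat i,\widehat j} = 0$ and $\operatorname{rank}\Sigma_{\widehat i,\widehat j}\le n-3$. Together these give $\operatorname{rank}\left(\begin{smallmatrix}\Sigma_{\widehat i,\widehat j}\\ \mathbf 1^{T}\end{smallmatrix}\right)\le n-3$.

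I do not expect a serious obstacle: the whole argument rests on the identity $\Sigma v = \mathbf 1$ with $\operatorname{supp}(v)\subseteq\nei(k)$, which is immediate from Kirchhoff's relation $L(G)\mathbf 1_n = 0$. The steps that need care are making the inclusion $\mathcal M(G,k)\subseteq\mathcal L_{G'}$ precise in part (1) (and recalling that the conditional-independence ideal sits inside the vanishing ideal), and keeping the index bookkeeping straight in part (3): which row and column are deleted, and the passage between row span and column span under transposition.
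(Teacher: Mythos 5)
Your argument is correct, and for parts (1) and (3) it matches the paper's proof essentially step by step (containment $\mathcal{M}(G,k)\subseteq\mathcal{L}_{G'}$ for (1); for (3), the vanishing of $\det\Sigma_{\widehat i,\widehat j}$ from $l_{ij}=0$ combined with the fact that the all-ones row already lies in the span of the rows indexed by $\nei(k)$, which survive in $\Sigma_{\widehat i,\widehat j}$ because $i\notin\nei(k)$). Where you diverge is in how you establish the key identity behind part (2). The paper argues missing edge by missing edge: for each non-neighbor $i$ of $k$, the zero row sum of $L_{\widehat k,\widehat k}$ gives $\sum_j \mathrm{adj}(\Sigma)_{i,j}=0$, interpreted as singularity of the matrix $\Sigma_{i\to 1}$ obtained by replacing the $i$th row of $\Sigma$ by $(1,\ldots,1)$; it then intersects these statements over all non-neighbors, using invertibility of $\Sigma$, to conclude that $(1,\ldots,1)$ lies in the span of the rows indexed by $\nei(k)$. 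You instead get this in one stroke from Kirchhoff's relation: $L(G)\mathbf{1}_n=0$ gives $K\mathbf{1}_{n-1}=-L(G)_{\widehat k,k}$, a vector $v$ supported on $\nei(k)$, whence $\mathbf{1}_{n-1}=\Sigma v$ on the dense locus where $K$ is invertible. This is cleaner: it avoids the adjugate computation and the ``only common rows'' intersection step (which tacitly needs uniqueness of the representation coming from linear independence of the rows of $\Sigma$), and it makes the genericity/Zariski-closure passage and the vacuous disconnected case explicit, which the paper leaves implicit. The trade-off is that the paper's route exhibits the individual polynomials $\sum_j\mathrm{adj}(\Sigma)_{i,j}$ attached to each missing edge $(i,k)$, which ties (2) directly to the generators appearing in \Cref{thm: covariance ideal equations}, whereas your route produces the rank statement without singling out those generators.
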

\begin{proof}
\begin{enumerate}
    \item This directly follows from the fact that $\mathcal{M}(G,k)$ is contained in the Gaussian graphical model corresponding to $G'$, as the non-edges of $G'$ are also non-edges of $G$.

    \item We know that the $(i,k)$ coordinate of $L$ is zero (as a polynomial) if and only if the edge $(i,k)$ is missing in the graph $G$. As the sum of each row of $L$ is zero, the sum of the row of $L_{\widehat{k}, \widehat{k}}$ corresponding to the $i$th vertex is also zero (since we only remove a zero from that row in $L$). This immediately gives us a polynomial in $\mathbb{C}[\Sigma]$, which vanishes on $\mathcal{M}(G,k)^{-1}$. This polynomial is
    \begin{align*}
        \sum_{j} \mbox{adj}(\Sigma)_{i,j} = 0,
    \end{align*}
    where $\mbox{adj}(\Sigma)$ is the adjugate matrix of $\Sigma$.
    
    We can represent this equation as a determinantal constraint obtained by replacing the $i$th row of $\Sigma$ with the vector $(1,1,\ldots,1)$. Denoting this matrix as $\Sigma_{i\rightarrow 1}$, we know that $\Sigma_{i\rightarrow 1}$ has rank $n-2$. However, $\Sigma$ is an invertible matrix, implying that $\Sigma_{\widehat{i},\widehat{\emptyset}}$   
has rank $n-2$. This gives us that the vector $(1,1,\ldots,1)$ is a linear combination of all other rows of $\Sigma_{i\rightarrow 1}$.

    Now, the above statement is true for every missing edge $(i,k)$, i.e., the vector $(1,1,\ldots, 1)$ is a linear combination of all other rows of $\Sigma_{i\rightarrow 1}$, for every missing edge $(i,k)$. As the rows corresponding to neighbors of $k$ in $G$ are the only rows which are common in every $\Sigma_{i\rightarrow 1}$, we can deduce that $(1,1,\ldots, 1)$ is a linear combination of the rows corresponding to neighbors of $k$ in $G$, which completes the proof.

    \item As the $(i,j)$ coordinate of $L$ is zero, we know that $(-1)^{i+j}\mbox{adj}(\Sigma)_{i,j}= \det \Sigma_{\widehat{i},\widehat{j}}$ is zero. So, the rank of $\Sigma_{\widehat{i},\widehat{j}}$ is at most $n-3$. Now, we know from \ref{lemma:generators} $(2)$ that the rank of $\Sigma_{\nei(k),\widehat{\emptyset}}$ with the added row $(1,1,\ldots, 1)$ is at most $\#\nei(k)$, and that the vector $(1,1,\ldots, 1)$ is a linear combination of the rows of $\Sigma_{ne(k),\hat{\emptyset}}$. Thus, deleting the column corresponding to the vertex $j$ gives us that the rank of the matrix $\Sigma_{\nei(k),\widehat{j}}$ along with the row $(1,1,\ldots, 1)$ is still at most $\#\nei(k)$. As $\nei(k)\subseteq [n]\setminus \{i,k\}$, the rows of $\Sigma_{\nei(k),\widehat{j}}$ are also present in $\Sigma_{\widehat{i},\widehat{j}}$. This implies that adding the vector $(1,1,\ldots, 1)$ as a new row in $\Sigma_{\widehat{i},\widehat{j}}$ does not increase the rank of the matrix, and hence we have the desired minors in $I(\mathcal{M}(G,k)^{-1})$.  
\end{enumerate}
\end{proof}

\begin{example}
Let $G$ be the graph in \Cref{example:chordal laplacian}. As the vertex $1$ is connected to every other vertex in $G$, the vanishing ideal of $\mathcal{M}(G,1)^{-1}$ is exactly equal to the Gaussian graphical vanishing ideal of the subgraph of $G$ obtained after removing $1$. Thus, we have
\[
I(\mathcal{M}(G,1)^{-1})=\langle \sigma_{24}\sigma_{33}-\sigma_{23}\sigma_{34} \rangle.
\]
Similarly, $\mathcal{M}(G,2)$ is obtained by removing the row and column corresponding to the vertex $2$ from the Laplacian $L$. Observe that as $l_{24}$ is zero, removing the second row and column implies that the sum of the fourth row is still zero in $L_{\widehat{2},\widehat{2}}$. This gives us that $\mbox{adj} (\Sigma)_{1,4}+ \mbox{adj} (\Sigma)_{3,4} + \mbox{adj} (\Sigma)_{4,4}$ lies in $I(\mathcal{M}(G,2)^{-1})$. Now, as $\#\nei(2) = 2 < 3$, we can use \Cref{lemma:generators} (2) to show that $3 \times 3$ minors of the matrix
\[
\begin{pmatrix}
    \sigma_{11} & \sigma_{13} & \sigma_{14} \\
    \sigma_{13} & \sigma_{33} & \sigma_{34} \\
    1 & 1 & 1
\end{pmatrix}
\]
also lie in $I(\mathcal{M}(G,2)^{-1})$. Upon computing the ideal, we get that this determinant ($3\times 3$ minor) is precisely the generator of $I(\mathcal{M}(G,2)^{-1})$, i.e.,
\[
I(\mathcal{M}(G,2)^{-1})= \langle \sigma_{13}^2-\sigma_{13}\sigma_{14}-\sigma_{11}\sigma_{33}+\sigma_{14}\sigma_{33}+\sigma_{11}\sigma_{34}-\sigma_{13}\sigma_{34} \rangle.
\]
\end{example}

In the case of Gaussian graphical models, the vanishing ideal and conditional independence ideal are not necessarily equal. Further, it is still an open problem to get a graphical characterization of all the generators of the vanishing ideal. As the graphical vanishing ideal of $G[V \setminus \{ k \}]$ is always contained in $I(\mathcal{M}(G,k)^{-1})$, \Cref{lemma:generators} $(2)$ and $(3)$ do not provide a complete description of all the possible generators of $I(\mathcal{M}(G,k)^{-1})$. Thus, we leave it as an open problem to obtain a graphical characterization of all the generators of the ideal $I(\mathcal{M}(G,k)^{-1})$.

\section{Examples of Homaloidal Families} 
\label{sec: examples of homaloidals}

A hypersurface is homaloidal if its defining polynomial is homaloidal. In this section, we provide several examples of homaloidal hypersurfaces that can be realized as (affine) linear concentration models using our results in \Cref{sec: SDR for homaloidals}. First, we restate Dolgachev's classification \cite{cremonaTransformations} of homaloidal plane curves, and then we explicitly construct a four dimensional homaloidal hypersurface using techniques from \cite{ciliberto2007homaloidalHA}. Finally, we use \Cref{prop: homaloidal to det} and \Cref{prop: homaloidal iff mld 1} to prove that the product of two homaloidal polynomials in separate variables is again homaloidal.

\subsection{Dolgachev's Classification}

In \cite{cremonaTransformations}, Dolgachev classifies homaloidal plane curves (See \Cref{thm: homaloidal-classification} for $r=2$.). These plane curves can be naturally extended to hypersurfaces in $\PP^r$ for $r \geq 2$ \cite[Remark 3.2]{ciliberto2007homaloidalHA}, although other homaloidal hypersurfaces exist for $r > 2$.

\begin{theorem}[\cite{cremonaTransformations}]
    \label{thm: homaloidal-classification}
    The following hypersurfaces $V(f) \subset \PP^r$ are homaloidal for any $r \geq 2$:
    \begin{enumerate}[i.]
        \item a smooth quadric;
        \item the union of a smooth quadric with one of its tangent planes;
        \item the union of $r + 1$ independent hyperplanes.
    \end{enumerate}

    When $r = 2$, these are the only homaloidal hypersurfaces.
\end{theorem}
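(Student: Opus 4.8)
The plan is to prove the two assertions of \Cref{thm: homaloidal-classification} separately. For the first --- that each of the three hypersurfaces is homaloidal in every $\PP^r$, i.e.\ that its polar map is birational (\Cref{defn: homaloidal}) --- I would put the defining polynomial into a normal form and write down the inverse of its polar map explicitly. For the second --- that for $r=2$ these three are the only homaloidal plane curves --- I would use the polar‑degree formula together with a case analysis on the degree; this is the classification of \cite{cremonaTransformations}, which I would cite for the step that eliminates the high‑degree cases.

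For the first assertion, fix homogeneous coordinates $x_0,\dots,x_r$ on $\PP^r$. In case (i), a smooth quadric is $V(q)$ for a nondegenerate quadratic form $q$ with symmetric matrix $A$; then $\nabla q$ is the linear map $x\mapsto 2Ax$, so the polar map is the automorphism of $\PP^r$ induced by $A$, hence birational. In case (iii), a linear change of coordinates turns the union of $r+1$ independent hyperplanes into $V(x_0x_1\cdots x_r)$, and
\[
\nabla(x_0\cdots x_r)=\bigl[\ \prod_{j\ne 0}x_j\ :\ \cdots\ :\ \prod_{j\ne r}x_j\ \bigr]=[\,x_0^{-1}:\cdots:x_r^{-1}\,],
\]
the standard Cremona involution, which is its own inverse. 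In case (ii), all pairs (smooth quadric $Q$, tangent hyperplane $H$) are projectively equivalent, so a linear change of coordinates brings the pair to $q=x_0x_r+x_1^2+\cdots+x_{r-1}^2$ and $H=V(x_0)$, the point of tangency being $[0:\cdots:0:1]$. Setting $f=x_0q$, one computes
\[
\nabla f=[\,2x_0x_r+x_1^2+\cdots+x_{r-1}^2\ :\ 2x_0x_1\ :\ \cdots\ :\ 2x_0x_{r-1}\ :\ x_0^2\,],
\]
and one checks that the quadratic map
\[
[\,y_0:\cdots:y_r\,]\ \longmapsto\ [\,8y_r^2\ :\ 4y_1y_r\ :\ \cdots\ :\ 4y_{r-1}y_r\ :\ 4y_0y_r-y_1^2-\cdots-y_{r-1}^2\,]
\]
is a two‑sided inverse: composed with $\nabla f$ it gives the identity of $\PP^r$ (the composite is $[\,x_0:\cdots:x_r\,]\mapsto[\,8x_0^3x_0:\cdots:8x_0^3x_r\,]$). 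This settles the first assertion for all $r\ge 2$.

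For the second assertion, let $f\in\CC[x_0,x_1,x_2]$ be homaloidal of degree $d$. I would first reduce to the case that $f$ is reduced: if a component of $f$ occurs with multiplicity, deleting that multiplicity alters the polar map only by postcomposition with a fixed linear automorphism, so $V(f)$ is homaloidal if and only if its reduction $V(f)_{\mathrm{red}}$ is. For a reduced plane curve $C=V(f)$ of degree $d$, which has only isolated singularities, the polar‑degree formula of \cite{cremonaTransformations} gives
\[
\deg(\nabla f)=(d-1)^2-\sum_{p\in\operatorname{Sing}(C)}e_p,
\]
where $e_p$ is the intersection multiplicity at $p$ of two general polars of $C$, and $e_p$ equals the Milnor number $\mu_p$ when the singularity at $p$ is quasi‑homogeneous --- as every singularity occurring in degree $\le 3$ is. Since $\deg(\nabla f)\ge 0$ we have $\sum_p e_p\le (d-1)^2$, and $C$ is homaloidal exactly when $\sum_p e_p=(d-1)^2-1$, one less than the maximal value. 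A case analysis on $d$ now produces the list. For $d=2$, $C$ is smooth, a smooth conic. For $d=3$ we need $\sum_p\mu_p=3$; an irreducible cubic has at most one double point and hence $\sum_p\mu_p\le 2$, so $C$ is reducible, and among reduced plane cubics exactly three non‑concurrent lines (three nodes) and a smooth conic together with a tangent line (one $A_3$‑point) satisfy $\sum_p\mu_p=3$. It remains to rule out $d\ge 4$.

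The step I expect to be the main obstacle is exactly this last one. A count of local invariants is not conclusive on its own: one must also decide which singularity configurations are actually realized by a reduced plane curve of degree $d$, and whether the resulting polar net is genuinely birational rather than merely numerically of degree one. The pertinent structure is the polar net itself --- its base points are the singular points of $C$, and a homaloidal net of plane curves of degree $d-1$ must satisfy the Noether equalities $\sum m_i^2=(d-1)^2-1$ and $\sum m_i=3(d-2)$ for the multiplicities $m_i$ at its base points --- and combining this with the restrictions on how singular a degree‑$d$ plane curve can be forces $d\le 3$. This analysis is carried out in \cite{cremonaTransformations}, which I would invoke rather than reproduce. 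The one part I would be careful to do in detail myself is the reduction to reduced $f$ at the start of the second assertion, since that is the only place the multiplicities of the components of $f$ must be tracked.
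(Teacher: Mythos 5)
The paper itself offers no argument for this statement: it is quoted wholesale from \cite{cremonaTransformations}, so the only fair comparison is with that citation. Your explicit verifications of the first assertion are correct and are a genuine addition: the normal forms in cases (i)--(iii) are the right ones, the gradient computations check out, and in case (ii) the displayed quadratic map does satisfy $g\circ\nabla f=[\,8x_0^4:8x_0^3x_1:\cdots:8x_0^3x_r\,]=\mathrm{id}$ as rational maps; note that this one-sided check already suffices, since a rational self-map of $\PP^r$ admitting a rational left inverse is automatically dominant and hence birational in characteristic zero. Delegating the $r=2$ uniqueness to \cite{cremonaTransformations} is consistent with what the paper does.

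The one genuine flaw is precisely the step you flag as the one you would do yourself: the reduction to reduced $f$. It is false that deleting multiplicities changes the polar map only by postcomposition with a fixed linear automorphism. If $f=\prod_i f_i^{m_i}$, then projectively $\nabla f=\bigl[\sum_i m_i\bigl(\prod_{j\neq i}f_j\bigr)\nabla f_i\bigr]$ while $\nabla f_{\mathrm{red}}=\bigl[\sum_i \bigl(\prod_{j\neq i}f_j\bigr)\nabla f_i\bigr]$: the weights $m_i$ sit inside the sum, and when they are not all equal the two linear systems of polars are genuinely different, whereas postcomposition by a linear automorphism would preserve the span of the component polynomials. Concretely, for $q=xz-y^2$ and its tangent line $\ell=z$, the polar net of $q\ell^2$ is spanned by $z^2,\,yz,\,3xz-2y^2$ and that of $q\ell$ by $z^2,\,yz,\,2xz-y^2$, and these spans differ. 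The fact you actually want --- that the polar degree depends only on $f_{\mathrm{red}}$ --- is true, but it is Dolgachev's conjecture proved by Dimca and Papadima, not an elementary observation, so as written this step is a gap. Fortunately it is a gap in a superfluous step: the theorem concerns hypersurfaces, whose defining polynomials are reduced (and the paper's own paraphrase of \cite{cremonaTransformations} is for polynomials ``without multiple factors''), so you can simply delete the reduction and state the uniqueness for reduced $f$, citing \cite{cremonaTransformations} for the polar-degree bookkeeping and the exclusion of $d\geq 4$ exactly as you propose.
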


\begin{remark}
    A quadric polynomial $q(\bfx)$ can be written in matrix form $q(\bfx) = \bfx^\top Q \bfx$ for some symmetric matrix $Q$. Therefore, $\nabla q(\bfx) = 2 Q \bfx$, which proves that $q$ is homaloidal if and only if $Q$ is invertible. The \textit{rank of $q$} is the rank of $Q$.
\end{remark}

\begin{example}[Homaloidal plane curves]
    \label{ex: homaloidal plane curves}
    $f(x,y,z) = \frac{7}{25} x^2 - y^2 - \frac{48}{25} xz - \frac{7}{25} z^2$ is a smooth quadric (we see in \Cref{ex: quadric sdr} that $f$ is full rank), so $f$ is homaloidal. The plane $2x - 14z = 0$ is tangent to $V(f)$ at $(7 : 0 : 1)$, so $g(x,y,z) = f(x,y,z)(2x - 14z)$ is also homaloidal. $h(x,y,z) = (x + y) (y + z) (x + 2z)$ is a union of 3 independent hyperplanes, so it is homaloidal. 
\end{example}

\subsection{Constructing New Homaloidal Polynomials}

In \cite{ciliberto2007homaloidalHA}, the authors construct new homaloidal polynomials using projections of a \textit{rational normal scroll surface}, $S(a,b)$ for $a < b$. The resulting surface is denoted $Y(a,b)$, and its dual, $Y(a,b)^*$, is homaloidal for certain choices of $a, b$ (see below, \cite[Theorem 3.13]{ciliberto2007homaloidalHA}). We use this construction and \texttt{Macaulay2} to compute a specific example of a homaloidal polynomial which does not fall under \Cref{thm: homaloidal-classification} (the code is located at: \url{https://github.com/shelbycox/Homaloidal/blob/main/src/Sab.m2}).

\begin{theorem}{\cite[Theorem 3.13]{ciliberto2007homaloidalHA}} 
    \label{thm: homaloidal 1}
    $Y(a,b)$ is a surface obtained via two specific consecutive projections from a rational normal scroll surface $S(a,b)$. The hypersurface $Y(r - 2, d - r + 2)^\ast$ of degree $d$ is homaloidal for every $r \geq 3$, $d \geq 2r - 3$.
\end{theorem}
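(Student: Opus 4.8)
Since \Cref{thm: homaloidal 1} is a restatement of \cite[Theorem 3.13]{ciliberto2007homaloidalHA}, the primary ``proof'' is a citation; what follows is the conceptual route I would take to reconstruct it, and how the paper's own explicit computation fits into that route. Recall from \Cref{defn: homaloidal} that a hypersurface $V(F)\subset\PP^N$ is homaloidal exactly when the polar map $\nabla F:\PP^N\dashrightarrow\PP^N$ is birational, and from classical projective duality that the restriction of $\nabla F$ to $V(F)$ is the Gauss map of $V(F)$, whose image is the dual variety $V(F)^\ast$. Taking $F$ to be the defining polynomial of $Y(a,b)^\ast$, biduality gives that $\nabla F$ maps $Y(a,b)^\ast$ onto $Y(a,b)^{\ast\ast}=Y(a,b)$; in particular $\nabla F$ is dominant as soon as $Y(a,b)$ is linearly nondegenerate, which is immediate from the construction. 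Thus the whole content of the theorem is that this dominant self-map of $\PP^{a+b-1}$ has degree one.

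First I would fix notation for the scroll. The rational normal scroll $S(a,b)\subset\PP^{a+b+1}$ (for $1\le a<b$) is cut out by the $2\times2$ minors of the Hankel-type block matrix
\[
\begin{pmatrix}
u_0 & u_1 & \cdots & u_{a-1} & v_0 & v_1 & \cdots & v_{b-1}\\
u_1 & u_2 & \cdots & u_{a} & v_1 & v_2 & \cdots & v_{b}
\end{pmatrix},
\]
so that its degree $a+b$, smoothness, Gauss map, and second fundamental form are all read off from this catalecticant presentation. By definition $Y(a,b)$ is the image of $S(a,b)$ under two successive internal projections from general points of the scroll, landing in $\PP^{a+b-1}$. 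I would check that these projections keep the surface linearly nondegenerate and, more importantly, exploit the standard dictionary that projecting a variety $X$ from a point $p\in X$ corresponds on the dual side to cutting $X^\ast$ with the hyperplane of forms vanishing at $p$; the two projections are calibrated precisely so that $Y(a,b)^\ast$ drops to a hypersurface of degree $d=a+b$, with a concrete (still Hankel-flavoured) defining polynomial $F$. Note that $a=r-2$, $b=d-r+2$ gives $a+b=d$, while the requirements $a\ge 1$ and $a<b$ translate respectively to $r\ge 3$ and $d\ge 2r-3$, matching the hypotheses.

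The core step is birationality of $\nabla F$, which I would establish by producing its inverse explicitly. The $\PP^1$-parametrization of $Y(a,b)$ inherited from the scroll, composed with the tangent-hyperplane (second fundamental form) data of $Y(a,b)$ along it, yields a rational map from the dual space back to $\PP^{a+b-1}$ which one verifies is a one-sided---hence two-sided---inverse of $\nabla F$; the verification amounts to Euler's identity $\sum_i x_i\,\partial_i F = dF$ together with the biduality already noted. For the purposes of this paper it suffices to run this in one representative case: fix small $r$ and $d$ (the promised four-dimensional instance $Y(r-2,d-r+2)^\ast$), exhibit $F=\det M$ for an explicit matrix $M$ of linear forms of Hankel shape, use $\nabla\det M=\det M\cdot\operatorname{adj}(M)$ restricted to the relevant row or column, and check directly that the resulting Cremona map is inverted by an adjugate of the same shape; one then invokes \cite[Theorem 3.13]{ciliberto2007homaloidalHA} for the full family.

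The main obstacle is uniformity. Checking a single pair $(a,b)$ by a determinant/adjugate computation is routine, but proving $\deg\nabla F=1$ for all $a<b$ simultaneously requires either the structural input of \cite{ciliberto2007homaloidalHA}---their control of the entry loci and second fundamental forms of scrolls, and the inductive behaviour of these under the two internal projections---or a closed-form inverse polar map valid in every $\PP^{a+b-1}$. I would rely on the former, so that in the present paper \Cref{thm: homaloidal 1} is quoted rather than reproved, with the explicit low-dimensional instance serving as the worked example that feeds the symmetric determinantal representations of \Cref{sec: SDR for homaloidals}.
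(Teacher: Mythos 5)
Your treatment of \Cref{thm: homaloidal 1} coincides with the paper's: the statement is quoted from \cite[Theorem 3.13]{ciliberto2007homaloidalHA} and is not reproved; the paper only carries out the construction explicitly for $a=2$, $b=3$ and certifies homaloidality of that single polynomial computationally (by inverting the gradient map in \texttt{Macaulay2}), which is exactly the role you assign to the worked example. So as a citation your proposal is fine.

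However, the reconstruction sketch surrounding the citation misdescribes the construction and contains a step that would fail if you tried to carry it out. First, $Y(a,b)$ is not obtained by internal projections from general points of the scroll: both projections are external, from carefully chosen linear spaces disjoint from the surface --- a $\Phi$ of dimension $a-2$ contained in the linear space $E$ attached to the multiplicity-$\geq b$ singular locus of $S(a,b)^\ast$, and then a $\Psi$ of dimension $b-a-1$ inside the span of $\Lambda$ and $b-a$ rulings of $X(a,b)$ --- and the duality dictionary that is actually used is the one for external projection, $(X_\Pi)^\ast=\Pi^\perp\cap X^\ast$ (\Cref{prop: crs prop1-1}), not a hyperplane section attached to a point of $X$. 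The special position of these centers is where the content of the theorem lies, and it also fixes the ambient space: the resulting hypersurface $Y(r-2,d-r+2)^\ast$ sits in $\PP^{a+2}=\PP^{r}$, not in $\PP^{a+b-1}$ (the two agree only in the example $r=4$, $d=5$). Second, the dominance argument via biduality is a non sequitur: the restriction of $\nabla F$ to $V(F)=Y(a,b)^\ast$ is the Gauss map, whose image is the two-dimensional surface $Y(a,b)$, so it says nothing about the image of $\nabla F$ on the ambient projective space; dominance of the polar map amounts to the nonvanishing of the Hessian of $F$ and does not follow from linear nondegeneracy of $Y(a,b)$ --- it requires the structural analysis of \cite{ciliberto2007homaloidalHA} or a direct verification, as in the paper's computed instance. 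Since you ultimately defer to the cited theorem, neither slip undermines what the paper needs, but as written the sketch would not reconstruct the proof.
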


In the following subsection, we explain how such a homaloidal hypersurface can be computed using the construction in \cite[Sections 1, 3]{ciliberto2007homaloidalHA}. We begin by describing the scroll surface $S(a,b)$, and after that we introduce the tools we need to obtain $Y(a,b)$. We include a running example, for $a = 2$, $b = 3$, with explicit computations for each stage of the construction.

The ideal of $S(a,b)$ is generated by the $(2 \times 2)$-minors of the matrix below: 
\[ M(a,b) =
    \begin{pmatrix}
        x_0 & x_1 & \ldots & x_{a - 1} & x_{a + 1} & x_{a + 2} & \ldots & x_{a + b} \\
        x_1 & x_2 & \ldots & x_a & x_{a + 2} & x_{a + 3} & \ldots & x_{a + b + 1}
    \end{pmatrix}.
\]
$S(a,b)$ has degree $a + b$ and belongs to a special class of toric varieties called \textit{Hirzebruch surfaces}. It is ruled by lines between two curves.

\begin{definition}
    \label{defn: rational normal curve}
    The rational normal curve of degree $a$, denoted $C_a$, is the closure of the image of the map $(s : t) \mapsto (s^a : s^{a-1} t : \cdots : t^a)$. Its ideal is generated by the $2$-minors of the left side of $M(a,b)$.
\end{definition}

The surface $S(a,b)$ consists of lines between $C_a$ (embedded in the first $a + 1$ coordinates), and $C_b$ (embedded in the last $b + 1$ coordinates). In particular, $S(a,b)$ contains the points $V(x_0 = x_1 = \cdots = x_a, x_{a+1} = \cdots = x_{a + b + 1} = 0)$ and $V(x_0 = x_1 = \cdots = x_a = 0, x_{a+1} = \cdots = x_{a + b + 1})$. Furthermore, the line between these two points is a ruling on $S(a,b)$.

Using the $2$-minors of $M(a,b)$, we can also find points that are not on $S(a,b)$. For example, the points $V(x_0, \ldots, \widehat{x_i}, \ldots, x_{a + b + 1})$ are not on $S(a,b)$ for $i = 1, \ldots, a-1, a + 2, \ldots, a + b$, since the minor with $x_i$ on the anti-diagonal does not vanish.

\begin{example}
    Let $a = 2$ and $b = 3$. In step 1 of our code, we compute the defining ideal of $S(2,3)$. The matrix $M_{2,3}$ is as follows:
    \[ M_{2,3} =
        \begin{pmatrix}
            x_0 & x_1 & x_3 & x_4 & x_5 \\
            x_1 & x_2 & x_4 & x_5 & x_6
        \end{pmatrix}.
    \]
    The ideal of the surface $S(2,3)$ is generated by the $2 \times 2$ minors of $M_{2,3}$, which are listed below:
    \begin{eqnarray*}
    I(S(2,3)) &=& \langle x_{0}x_{2} - x_{1}^{2}, \:x_{0}x_{4} - x_{1}x_{3}, \:x_{1}x_{4} - x_{2}x_{3}, \:x_{0}x_{5} - x_{1}x_{4}, \:x_{1}x_{5} - x_{2}x_{4}, \:x_{3}x_{5} - x_{4}^{2},\\
    && \:x_{0}x_{6} - x_{1}x_{5}, \:x_{1}x_{6} - x_{2}x_{5}, \:x_{3}x_{6} - x_{4}x_{5}, \:x_{4}x_{6} - x_{5}^{2}\rangle.
    \end{eqnarray*}

    We can find points on $S(2,3)$ by finding points on one of the two rational normal curves in $S(2,3)$, which can be found using the parameterization of these rational normal curves. Below are two such points, one on the rational normal curve of degree 2, which lies in the first three coordinates, and one on the rational normal curve of degree 3, which lies in the last four coordinates. To find coordinate points not on $S(2,3)$, we use the matrix description. For each $2 \times 2$ minor with a constant anti-diagonal, setting the variable on the anti-diagonal equal to one with all other coordinates equal to zero gives a point not on $S(2,3)$.
    \begin{align*}
        (1 : 1 : 1 : 0 : 0 : 0 : 0), (0 : 0 : 0 : 1 : 1 : 1 : 1) &\in S(2,3), \\
        (0 : 1 : 0 : 0 : 0 : 0 : 0), (0 : 0 : 0 : 0 : 1 : 0 : 0), (0 : 0 : 0 : 0 : 0 : 1 : 0) &\notin S(2,3).
    \end{align*}
\end{example}

Using $S(a,b)$, one can construct another surface $Y(a,b)$ whose dual is homaloidal.

\begin{definition}
    $Y(a,b)$ is any surface obtained by the consecutive projections of $S(a,b)$ outlined in \eqref{eqn: homaloidal projections}.
    \begin{equation}
        \label{eqn: homaloidal projections}
        S(a, b) \xrightarrow{\sigma_\Phi} X(a, b) \xrightarrow{\sigma_\Psi} Y(a, b).
    \end{equation}
    The linear spaces $\Phi$ and $\Psi$ are discussed further in \Cref{susbec: first projection,subsec: second projection}. We note here that there are many possible choices for $\Phi$ and $\Psi$ for each $a, b$.
\end{definition}

The following subsections define these projections and the tools we need to compute them.

\subsubsection{Duals}

\begin{definition}
    \label{defn: conormal variety}
    The conormal variety $N(X)$ is the closure of the points of the form $(x, \pi) \in \PP^r \times \left( \PP^r \right)^\ast$, where $x$ is a smooth point of $X$ and $\pi^\perp$ contains the tangent space of $X$ at $x$. In more compact notation,
    \begin{equation}
        \label{eqn: conormal variety}
        N(X) := \overline{\{ (x, \pi) \mid x \in \mbox{smooth}(X), T_{X, x} \subset \pi^\perp \}} \subseteq \PP^r \times \left( \PP^r \right)^\ast.
    \end{equation}

    Recall that a point $x$ of $X$ is \textit{smooth} if $\dim T_{X,x} = \dim X$.
\end{definition}

\begin{definition}
    \label{defn: dual variety}
    The dual variety $X^\ast$ of $X$ is the projection of $N(X)$ onto its second factor $\left( \PP^r \right)^\ast$.
\end{definition}

In \texttt{Macaulay2}, duals can be computed using the \texttt{dualVariety} command in the \texttt{Resultants} package \cite{ResultantsSource, ResultantsArticle}.

\begin{example}[$a = 2, b = 3$]
    The dual $S(a,b)^\ast$ is a hypersurface (as expected). Step 2a in our code computes $S(2,3)^\ast$, which is defined by the following equation.
    \begin{eqnarray*}
    I(S(2,3)^\ast) &=& \langle x_{2}^{3}x_{3}^{2}-x_{1}x_{2}^{2}x_{3}x_{4}+x_{0}x_{2}^{2}x_{4}^{2}+x_{1}^{2}x_{2}x_{3}x_{5}-2\,x_{0}x_{2}^{2}x_{3}x_{5}-x_{0}x_{1}x_{2}x_{4}x_{5}+x_{0}^{2}x_{2}x_{5}^{2}\\
    &&-x_{1}^{3}x_{3}x_{6}+3\,x_{0}x_{1}x_{2}x_{3}x_{6}+x_{0}x_{1}^{2}x_{4}x_{6}-2\,x_{0}^{2}x_{2}x_{4}x_{6}-x_{0}^{2}x_{1}x_{5}x_{6}+x_{0}^{3}x_{6}^{2} \rangle.
      \end{eqnarray*}
\end{example}

\subsubsection{Projections}

Here we define the \textit{projection away from a linear space}, which will be used twice in the homaloidal hypersurface construction of \cite{ciliberto2007homaloidalHA}.

\begin{definition}
    \label{defn: projection}
    For a linear space $\Pi$ of dimension $m$ cut out by homogeneous linear forms $\ell_1, \ldots, \ell_{r - m}$, the \textit{projection away from $\Pi$} is the map $\sigma_{\Pi}$ below.
    \begin{center}
        \begin{tikzcd}[row sep = .25em]
            \sigma_\Pi :& \mathbb{P}^r \rar[dashed] & \left( \Pi^\perp \right)^\ast \cong \PP^{r - m - 1} \\
            & \bfx \rar[mapsto] & \left( \ell_1(\bfx) : \cdots : \ell_{r - m}(\bfx) \right)
        \end{tikzcd}
    \end{center}

    When $\Pi \cap X = \emptyset$, the closure of $\sigma_\Pi(X)$ is called the \textit{external projection of $X$ from $\Pi$}, and is denoted $X_\Pi$.
\end{definition}

In \texttt{Macaulay2}, projection away from the coordinate linear space $\Pi = \langle x_0, \ldots, x_{k} \rangle$ can be computed by eliminating the variables $x_{k+1}, \ldots, x_r$. If $\Pi$ is not a coordinate subspace, first use a linear transformation that takes $\Pi$ to a coordinate linear space, then use elimination. Alternatively, we can use \Cref{prop: crs prop1-1} to compute these projections.

\begin{prop}[{\cite[Proposition 1.1]{ciliberto2007homaloidalHA}}]
    \label{prop: crs prop1-1}
    Let $X \subset \PP^r$, and assume that the linear span of $X$ is $\PP^r$. If $\dim(X) < r - \dim \Pi - 1$, and $\Pi^\perp \cap X^\ast$ is irreducible and reduced, then $(X_\Pi)^\ast = \Pi^\perp \cap X^\ast$.
\end{prop}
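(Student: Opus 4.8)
The plan is to pass to conormal varieties and to exploit the fact that projecting $X$ away from $\Pi$ corresponds, on the dual side, to intersecting $X^\ast$ with the linear space $\Pi^\perp$. Write $m = \dim \Pi$, so that $\sigma_\Pi : \PP^r \dashrightarrow \PP^{r-m-1}$ with $\PP^{r-m-1} = (\Pi^\perp)^\ast$, and identify the dual of $\PP^{r-m-1}$ with the linear subspace $\Pi^\perp \subseteq (\PP^r)^\ast$ of hyperplanes of $\PP^r$ containing $\Pi$. For $H \in \Pi^\perp$ let $\bar H$ denote the corresponding hyperplane of $\PP^{r-m-1}$; the basic compatibility is that $x \in H \iff \sigma_\Pi(x) \in \bar H$ for $x \in \PP^r \setminus \Pi$, and more generally $\sigma_\Pi^{-1}(\bar H) \cup \Pi = H$, while $\sigma_\Pi$ maps any linear subspace $T$ meeting $\Pi$ trivially onto the linear space $\overline{\sigma_\Pi(T)} = \sigma_\Pi(\langle \Pi, T\rangle)$. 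The first step is to record the standard consequence of the hypotheses $\Pi \cap X = \emptyset$ and $\dim X < r - \dim\Pi - 1$ (together with $\langle X \rangle = \PP^r$): the morphism $\sigma_\Pi|_X : X \to X_\Pi$ is birational and is an isomorphism over a dense open subset of $X_\Pi$, so $\dim X_\Pi = \dim X$, and for a general smooth point $\bar x = \sigma_\Pi(x)$ of $X_\Pi$ the preimage $x$ is a smooth point of $X$ with $T_{X,x}\cap\Pi=\emptyset$ and $T_{X_\Pi,\bar x} = \overline{\sigma_\Pi(T_{X,x})} = \sigma_\Pi(\langle \Pi, T_{X,x}\rangle)$.

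Next I would prove the inclusion $(X_\Pi)^\ast \subseteq \Pi^\perp \cap X^\ast$. Let $\bar H$ be a hyperplane of $\PP^{r-m-1}$ tangent to $X_\Pi$ at a general smooth point $\bar x$, and let $H \in \Pi^\perp$ and $x \in X^{\mathrm{sm}}$ be as above. From $T_{X_\Pi,\bar x} \subseteq \bar H$ one gets $\langle \Pi, T_{X,x}\rangle \subseteq \sigma_\Pi^{-1}(\bar H) \cup \Pi = H$, hence $T_{X,x} \subseteq H$; since $x$ is a smooth point of $X$, this says precisely $H \in X^\ast$, and $H \in \Pi^\perp$ by construction. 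Such pairs $(\bar x, \bar H)$ are dense in the conormal variety $N(X_\Pi)$, so the corresponding $\bar H$ are dense in $(X_\Pi)^\ast$, which therefore lies in the closed set $\Pi^\perp \cap X^\ast$ under the bijection $H \leftrightarrow \bar H$.

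For the reverse inclusion I would run the same correspondence backwards, using that $X_\Pi$ is irreducible (being the closure of the image of the irreducible $X$) and that $\Pi^\perp \cap X^\ast$ is irreducible by hypothesis. The image of the closed set $N(X)\cap(\PP^r\times\Pi^\perp)$ under the second projection is exactly $\Pi^\perp \cap X^\ast$, since every $H \in \Pi^\perp \cap X^\ast$ is tangent to $X$ at some point $x \notin \Pi$, giving $(x,H) \in N(X)\cap(\PP^r\times\Pi^\perp)$. By irreducibility, either a general member of $\Pi^\perp \cap X^\ast$ is a hyperplane tangent to $X$ at a smooth point $x$ outside the ramification locus of $\sigma_\Pi|_X$, or $\Pi^\perp \cap X^\ast$ is contained in the proper ``bad'' closed subset of $X^\ast$. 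In the former case such an $x$ maps to a general smooth point $\bar x$ of $X_\Pi$ with $T_{X_\Pi,\bar x} = \overline{\sigma_\Pi(T_{X,x})} \subseteq \bar H$, so $\bar H \in (X_\Pi)^\ast$; letting $H$ range over a dense subset of $\Pi^\perp \cap X^\ast$ and taking closures gives $\Pi^\perp \cap X^\ast \subseteq (X_\Pi)^\ast$, hence, with the previous paragraph, the set-theoretic equality $(X_\Pi)^\ast = \Pi^\perp \cap X^\ast$; the reducedness hypothesis then upgrades this to an equality of schemes. The main obstacle is precisely ruling out the latter case, i.e.\ showing that a general member of the irreducible variety $\Pi^\perp \cap X^\ast$ is a genuine tangent hyperplane at a smooth point of $X$ over which $\sigma_\Pi$ is unramified. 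This is where $\langle X\rangle = \PP^r$ and the sharp bound $\dim X < r - \dim\Pi - 1$ must be brought to bear: they force $\sigma_\Pi|_X$ to be birational with controlled ramification and, more delicately, ensure that $\sigma_\Pi$ maps the contact locus of a general tangent hyperplane of $X$ isomorphically onto the contact locus of the corresponding hyperplane of $X_\Pi$, so that $X$ and $X_\Pi$ have the same dual defect and $\dim(X_\Pi)^\ast = \dim(\Pi^\perp \cap X^\ast)$. Equivalently, one can phrase this step as identifying $N(X_\Pi)$ with the unique component of $\overline{(\sigma_\Pi \times \mathrm{id})\big(N(X)\cap(\PP^r\times\Pi^\perp)\big)}$ dominating $X_\Pi$, and showing the remaining components contribute no extra points to $\Pi^\perp \cap X^\ast$ under the second projection.
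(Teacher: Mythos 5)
First, note that the paper does not prove this statement at all: it is quoted verbatim from \cite{ciliberto2007homaloidalHA} (their Proposition 1.1) and used as a black box, so your proposal can only be judged on its own merits. On those merits there is a genuine gap, and it sits exactly where you flag it. Your opening claim that the hypotheses $\Pi\cap X=\emptyset$, $\langle X\rangle=\PP^r$ and $\dim X< r-\dim\Pi-1$ force $\sigma_\Pi|_X$ to be \emph{birational} is false. These hypotheses only make $\sigma_\Pi|_X$ finite (a fibre is a complete subvariety of $\PP^{\dim\Pi+1}\setminus\Pi$, hence finite), not degree one: for instance, take $X\subset\PP^4$ the nondegenerate sextic curve cut on the cone over a twisted cubic by a general quadric missing the vertex $p$, and $\Pi=\{p\}$; then $\dim X=1<3$, $\Pi\cap X=\emptyset$, yet $\sigma_\Pi|_X$ is $2:1$ onto the twisted cubic. (In that example $\Pi^\perp\cap X^\ast$ is everywhere non-reduced, which shows that any birationality-type conclusion must be extracted from the irreducibility/reducedness hypothesis on $\Pi^\perp\cap X^\ast$, not from the dimension count.) Your forward inclusion survives this, since finiteness already guarantees that a general smooth point of $X_\Pi$ has only smooth preimages and the cone-level computation $d\sigma(T_{X,x})\subseteq T_{X_\Pi,\bar x}$ gives $T_{X,x}\subseteq H$; but the justification you give for it should not route through birationality.

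The real problem is the reverse inclusion, which you yourself identify as ``the main obstacle'' and then do not close: you defer it to the assertion that nondegeneracy and the dimension bound ``force $\sigma_\Pi|_X$ to be birational with controlled ramification'' and preserve dual defect, and by the example above that assertion is not available. Moreover, your dichotomy (either a general member of $\Pi^\perp\cap X^\ast$ is an honest tangent hyperplane at a good smooth point, or $\Pi^\perp\cap X^\ast$ lies in a bad closed subset of $X^\ast$) is never resolved: the second horn is not absurd on its face, and the only hypotheses that can kill it are precisely the irreducibility and reducedness of $\Pi^\perp\cap X^\ast$, which your argument never brings to bear at this step. A workable route is to use them directly: the forward inclusion gives the closed irreducible subvariety $(X_\Pi)^\ast\subseteq\Pi^\perp\cap X^\ast$, so by irreducibility of the right-hand side it suffices to prove equality of dimensions, and reducedness then upgrades the set-theoretic equality to the scheme-theoretic one; the substance of the proposition is exactly this dimension (dual-defect) comparison, which your proposal acknowledges but does not carry out. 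As it stands, the proof is incomplete at its crux and rests on an intermediate claim that is false in the stated generality.
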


\subsubsection{First Projection}
\label{susbec: first projection}
The first projection uses a remarkable property of $S(a,b)^\ast$: the singularities in $S(a,b)^\ast$ of multiplicity at least $b$ form a linear space, $E^\perp$ \cite[Proposition 1.6]{ciliberto2007homaloidalHA}. After computing $E$ (the orthogonal complement of $E^\perp$), there is a linear space $\Phi$ with the following properties:
\begin{enumerate}[i.]
    \item $\dim \Phi = a - 2$,
    \item $\Phi \subseteq E$, and
    \item $\Phi \cap S(a,b) = \emptyset$.
\end{enumerate}

The resulting surface, $X(a,b) := \overline{\sigma_\Phi(S(a,b))}$, also has degree $d$ and is also ruled. In particular, the image of rulings on $S(a,b)$ under the projection are lines on $X(a,b)$. The linear space $E$ maps to a line $\Lambda$ under the projection. The details of this construction can be found in \cite[Section 1]{ciliberto2007homaloidalHA}.

\begin{example}[$a = 2, b = 3$]
    In step 2b of our code, we compute $E = \langle x_3, x_4, x_5, x_6 \rangle$; since $a - 2 = 0$, $\Phi$ is a point. One possible choice is 
    $$\Phi = (0 : 1 : 0 : 0 : 0 : 0 : 0) = V(x_0, x_2, x_3, x_4, x_5, x_6).$$
    This point clearly lies on $E$, and we saw earlier that it is not a point on $S(2,3)$ (this is confirmed in step 3 of our code). We note that the choice of a coordinate point simplifies the computation that follows. In step 4 of our code, we eliminate $x_1$ from $I(S(2,3))$ to compute $I(X(2,3))$.
    $$I(X(2,3)) = \langle x_{0}x_{5} - x_{2}x_{3}, \,x_{2}x_{4} - x_{0}x_{6}, \,x_{4}^{2} - x_{3}x_{5}, \,x_{3}x_{6} - x_{4}x_{5}, \,x_{5}^{2} - x_{4}x_{6} \rangle.$$
\end{example}

\subsubsection{Second Projection}
\label{subsec: second projection}
The second projection uses a linear space $\Psi$ constructed from $\Lambda$ and $b - a$ rulings on $X(a,b)$. Denote the rulings on $X(a,b)$ by $F_1 \ldots, F_{b-a}$. Then there exists a generic linear space $\Psi$ satisfying the following conditions:
\begin{enumerate}[i.]
    \item $\dim \Psi = b - a - 1$,
    \item $\Psi \subseteq \mbox{span} \{ \Lambda, F_1, \ldots, F_{b-a} \}$, and
    \item $\Psi \cap X(a,b) = \emptyset$.
\end{enumerate}

The resulting surface is $Y(a,b) := \overline{\sigma_{\Psi}(X(a,b))}$, and its dual, $Y(a, b)^\ast$, is homaloidal when $a = r - 2$, $b = d - r + 2$ for any $r \geq 3$, $d \geq 2r - 3$ \cite[Theorem 3.13]{ciliberto2007homaloidalHA}. For more details on this stage of the construction, see \cite[Section 3]{ciliberto2007homaloidalHA}.

\begin{example}[$a = 2, b = 3$]
    In step 5, we compute the projection of $E$, $\Lambda = \langle x_3, x_4, x_5, x_6 \rangle$.
    Note that $b - a = 1$, so $\Psi$ is a point and we need just one ruling on $X(2,3)$, which we compute in step 6. Let $L$ denote the line on $S(a,b)$ containing the points $(1 : 1 : 1 : 0 : 0 : 0 : 0)$, and $(0 : 0 : 0 : 1 : 1 : 1 : 1)$. Then $F = \overline{\sigma_\Phi(L)}$ is a ruling on $X(2,3)$. In step 7, we choose the point $\Psi = (1 : -1 : 1 : 1 : 1 : 1)$ in the linear space spanned by $F$ and $\Lambda$. In step 8 of our code, we use a linear transformation to take $\Psi$ to the coordinate point $(0 : 1 : 0 : 0 : 0 : 0)$, and then eliminate $x_2$ to obtain $Y(2,3)$. In step 9, we take the dual to produce $Y(2,3)^\ast$.
    \begin{align*}
        I(Y(2,3)^\ast) =&\langle x_{0}^{3}x_{3}^{2}+x_{0}^{3}x_{4}^{2}+x_{0}^{2}x_{3}x_{4}^{2}+x_{0}^{2}x_{4}^{3}-2\,x_{0}^{3}x_{3}x_{5}-2\,x_{0}^{2}x_{3}^{2}x_{5}-2\,x_{0}^{2}x_{3}x_{4}x_{5}+x_{0}^{2}x_{4}^{2}x_{5} +x_{0}^{3}x_{5}^{2}
        +x_{0}x_{3}^{2}x_{5}^{2} \\
        &+2\,x_{0}^{2}x_{4}x_{5}^{2}+2\,x_{0}x_{3}x_{4}x_{5}^{2}+x_{0}x_{4}^{2}x_{5}^{2}+2\,x_{0}^{2}x_{5}^{3}+2\,x_{0}x_{3}x_{5}^{3}+2\,x_{0}x_{4}x_{5}^{3}+x_{0}x_{5}^{4}-2\,x_{0}^{3}x_{4}x_{6} \\
        &-4\,x_{0}^{2}x_{3}x_{4}x_{6}-2\,x_{0}x_{3}^{2}x_{4}x_{6}-3\,x_{0}^{2}x_{4}^{2}x_{6}-4\,x_{0}x_{3}x_{4}^{2}x_{6}-2\,x_{0}x_{4}^{3}x_{6}-2\,x_{0}^{2}x_{3}x_{5}x_{6}-4\,x_{0}^{2}x_{4}x_{5}x_{6}\\
        &-4\,x_{0}x_{3}x_{4}x_{5}x_{6}-4\,x_{0}x_{4}^{2}x_{5}x_{6}+2\,x_{0}^{2}x_{5}^{2}x_{6}+2\,x_{0}x_{3}x_{5}^{2}x_{6}+2\,x_{0}x_{5}^{3}x_{6}+x_{0}^{3}x_{6}^{2}+3\,x_{0}^{2}x_{3}x_{6}^{2} \\
        &+3\,x_{0}x_{3}^{2}x_{6}^{2}+x_{3}^{3}x_{6}^{2}x_{0}^{2}x_{4}x_{6}^{2}+2\,x_{0}x_{3}x_{4}x_{6}^{2}+3\,x_{3}^{2}x_{4}x_{6}^{2}\,x_{0}x_{4}^{2}x_{6}^{2}+3\,x_{3}x_{4}^{2}x_{6}^{2}+x_{4}^{3}x_{6}^{2}+3\,x_{0}^{2}x_{5}x_{6}^{2} \\
        &+6\,x_{0}x_{3}x_{5}x_{6}^{2}+3\,x_{3}^{2}x_{5}x_{6}^{2}+2\,x_{0}x_{4}x_{5}x_{6}^{2}+6\,x_{3}x_{4}x_{5}x_{6}^{2}+3\,x_{4}^{2}x_{5}x_{6}^{2}+4\,x_{0}x_{5}^{2}x_{6}^{2}+3\,x_{3}x_{5}^{2}x_{6}^{2} \\
        &+3\,x_{4}x_{5}^{2}x_{6}^{2}+x_{5}^{3}x_{6}^{2}+3\,x_{0}^{2}x_{6}^{3}+6\,x_{0}x_{3}x_{6}^{3}+3\,x_{3}^{2}x_{6}^{3}+4\,x_{0}x_{4}x_{6}^{3}+6\,x_{3}x_{4}x_{6}^{3}+3\,x_{4}^{2}x_{6}^{3}+6\,x_{0}x_{5}x_{6}^{3} \\
        &+6\,x_{3}x_{5}x_{6}^{3}+6\,x_{4}x_{5}x_{6}^{3}+3\,x_{5}^{2}x_{6}^{3}+3\,x_{0}x_{6}^{4}+3\,x_{3}x_{6}^{4}+3\,x_{4}x_{6}^{4}+3\,x_{5}x_{6}^{4}+x_{6}^{5}\rangle.\\
    \end{align*}
    $Y(2,3)^\ast = Y(r - 2, d - r + 2)^\ast$ for $r = 4$, $d = 5$, so by \Cref{thm: homaloidal 1} $Y(2,3)^\ast$ is homaloidal. In step 10 of our code, we confirm that the polynomial above is homaloidal by inverting the Jacobian map using the \texttt{inverseOfMap} command in the \texttt{RationalMaps} package \cite{RationalMapsSource, RationalMapsArticle}.
\end{example}

\subsection{Products of Homaloidal Polynomials}

We end this section by providing a general result on constructing homaloidal polynomials. More specifically, we show that the product of two homaloidal polynomials is again homaloidal when the two polynomials use no common variables.

\begin{lemma}
\label{lemma:product of homaloidals}
If $f(x) \in \mathbb{C}[x_0,x_1,\ldots,x_n]$ and $g(y)\in \mathbb{C}[y_0,y_1,\ldots,y_k]$ are two homaloidal polynomials in different polynomial rings (with no common variables), then their product $f(x)g(y)\in \mathbb{C}[x_1,\ldots,x_n,y_1,\ldots,y_k]$ is also homaloidal.  \end{lemma}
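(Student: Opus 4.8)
The plan is to check \Cref{defn: homaloidal} directly for $P := f(x)\,g(y)$ by exhibiting an explicit rational inverse of $\nabla P$ assembled from $(\nabla f)^{-1}$ and $(\nabla g)^{-1}$. Write $d = \deg f$ and $e = \deg g$. Since $f$ uses only the variables $x$ and $g$ only the variables $y$, the chain rule gives the rational self-map of $\mathbb{P}^{n+k+1}$
\[
\nabla P(x,y) \;=\; \big(\, g(y)\,\nabla f(x)\,,\;\; f(x)\,\nabla g(y)\,\big),
\]
where $\nabla f(x)$ is the $(n+1)$-tuple of partials of $f$ and $\nabla g(y)$ the $(k+1)$-tuple of partials of $g$. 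As $f,g$ are homaloidal, $\nabla f$ and $\nabla g$ are birational; write $\Phi := (\nabla f)^{-1}$ and $\Psi := (\nabla g)^{-1}$, given by tuples of homogeneous polynomials of degrees $p$ and $q$ with no common factor. From $\nabla f \circ \Phi = \mathrm{id}$ together with the coprimality of the coordinates of $\Phi$, one extracts a nonzero homogeneous polynomial $c(a)$ with $\nabla f(\Phi(a)) = c(a)\,a$, and symmetrically a nonzero $c'(b)$ with $\nabla g(\Psi(b)) = c'(b)\,b$; moreover $f\circ\Phi$ and $g\circ\Psi$ are not identically zero because $\Phi,\Psi$ are dominant.

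Next I would take as candidate inverse
\[
\Theta(a,b) \;:=\; \big(\, g(\Psi(b))\,c(a)\,\Phi(a)\;:\;\; f(\Phi(a))\,c'(b)\,\Psi(b)\,\big)
\]
and verify two points. First, $\Theta$ is a well-defined rational self-map of $\mathbb{P}^{n+k+1}$: each coordinate of the left block has total degree $eq + (dp-1)$ in $(a,b)$ and each coordinate of the right block has total degree $dp + (eq-1)$, and these coincide; note that the naive guess $(\Phi(a):\Psi(b))$ fails this test whenever $p\neq q$, which is exactly why the two blocks must be reweighted. Second, $\nabla P\circ\Theta = \mathrm{id}$: writing $\alpha := g(\Psi(b))\,c(a)$ and $\beta := f(\Phi(a))\,c'(b)$, so that a representative of $\Theta(a,b)$ is $(\alpha\Phi(a),\,\beta\Psi(b))$, homogeneity of $f,g,\nabla f,\nabla g$ yields
\[
g(\beta\Psi(b))\,\nabla f(\alpha\Phi(a)) = \alpha^{d-1}\beta^{e}\,\big(g(\Psi(b))\,c(a)\big)\,a = \alpha^{d}\beta^{e}\,a,
\]
and likewise $f(\alpha\Phi(a))\,\nabla g(\beta\Psi(b)) = \alpha^{d}\beta^{e}\,b$, so $\nabla P(\Theta(a,b)) = (a:b)$.

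Hence $\nabla P$ is dominant with rational section $\Theta$, and a dominant rational map between irreducible varieties of the same dimension that admits a rational section is birational; therefore $fg$ is homaloidal. (One may instead argue generic injectivity directly: if $(x:y)$ and $(x':y')$ map to a generic $(a:b)$, then $\nabla f(x)\propto\nabla f(x')\propto a$ forces $x'=\gamma x$ by birationality of $\nabla f$, and similarly $y'=\delta y$; substituting back into $\nabla P$ forces $\delta^{e}\gamma^{d-1}=\gamma^{d}\delta^{e-1}$, i.e.\ $\gamma=\delta$, so $(x':y')=(x:y)$.)

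The one genuinely delicate point is the scaling bookkeeping in $\Theta$: since $\Phi$ and $\Psi$ have unrelated degrees, the two blocks must be weighted so that the output is simultaneously homogeneous (a point of $\mathbb{P}^{n+k+1}$) and an exact right inverse of $\nabla P$, and the choice of weights $g(\Psi(b))c(a)$ and $f(\Phi(a))c'(b)$ is what makes both hold at once; everything else is formal. An alternative, closer to the machinery emphasized in this paper, uses \Cref{prop: homaloidal iff mld 1} and \Cref{prop: homaloidal to det}: it is enough to show that the ML degree of the ambient projective space with respect to $fg$ is one, and via a symmetric determinantal (concentration-model) presentation this splits, since taking the block-diagonal sum of presentations of $f$ and $g$ the associated Gaussian log-likelihood is $\log f(x)+\log g(y)-u_1(x)-u_2(y)$, whose critical points are pairs of critical points of the two summands; hence $\mld_{fg}=\mld_f\cdot\mld_g=1$, and $fg$ is homaloidal.
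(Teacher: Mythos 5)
Your proof is correct, but it takes a genuinely different route from the paper. The paper never touches the gradient map directly: it writes $f=\det A(x)$ and $g=\det B(y)$ via symmetric determinantal representations, uses \Cref{prop: homaloidal to det} to identify $\mld_f$ and $\mld_g$ with the ML degrees of the embedded linear concentration models, forms the block-diagonal matrix $\operatorname{diag}(A(x),B(y))$ whose determinant is $fg$, and concludes $\mld_{fg}=1$, with a separate case (passing to $f^2g^2$) when the coefficient matrices of the SDRs are linearly dependent; homaloidality then follows from \Cref{prop: homaloidal iff mld 1}. You instead verify \Cref{defn: homaloidal} directly by exhibiting an explicit rational inverse of $\nabla(fg)$ built from the inverses $\Phi,\Psi$ of $\nabla f,\nabla g$, and your computation checks out: the identity $\nabla f(\Phi(a))=c(a)\,a$ is the standard description of a rational inverse in projective space, the reweighting by $g(\Psi(b))c(a)$ and $f(\Phi(a))c'(b)$ does make the two blocks homogeneous of the same total degree $dp+eq-1$, the verification $\nabla P\circ\Theta=\mathrm{id}$ up to the nonzero factor $\alpha^d\beta^e$ is right, and a dominant self-map of $\PP^{n+k+1}$ with a rational section is indeed birational (the section is generically injective and dominant, hence birational, and equals the inverse). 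What your approach buys is a self-contained, purely Cremona-theoretic argument that avoids SDRs and the ML-degree equivalences entirely and produces an explicit formula for the inverse map; what the paper's approach buys is a one-line statistical interpretation. Your closing sketch (the log-likelihood splits as $\log f(x)+\log g(y)-u_1(x)-u_2(y)$, so $\mld_{fg}=\mld_f\cdot\mld_g=1$) is essentially the paper's argument in spirit, and in fact, by arguing the splitting of critical points directly, it sidesteps the paper's case distinction on linear independence of the SDR coefficient matrices.
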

\begin{proof}
Let $\mathcal{L}_1 = \mathbb{C}^{n+1}$ and $\mathcal{L}_2 = \mathbb{C}^{k+1}$, respectively. 
By Theorem 13 of \cite{SDRshortProof2021}, we know that there exist sets of symmetric matrices $A_0,A_1,\ldots, A_n$ and $B_0,B_1,\ldots, B_k$ such that
\begin{eqnarray*}
f(x)= \det A(x), &\text{ where } & A(x):= A_0 + \sum_{i=1}^n x_iA_i, \text{ and } \\
g(y)= \det B(x), &\text{ where } & B(y):= B_0 + \sum_{i=1}^k y_iB_i.
\end{eqnarray*}
Let $A_1,A_2,\ldots, A_n$ (and similarly $B_1,B_2,\ldots, B_k$) be linearly independent. Then using Proposition 2.3 \cite{améndola2023differential} we can construct two affine linear embeddings $\mathcal{A}$ and $\mathcal{B}$, which gives us the following equations:
\[
\mld_{\det}(\mathcal{A}(\mathcal{L}_1))=\mld_f(\mathcal{L}_1) \text{ and } \mld_g(\mathcal{L}_2)=\mld_{\det}(\mathcal{B}(\mathcal{L}_2)).
\]
Further, as both $f$ and $g$ are homaloidal, $\mld_f(\mathcal{L}_1)$ and $\mld_g(\mathcal{L}_2)$ are equal to $1$ by Proposition 4.2 b \cite{améndola2023differential}. 

Now, let $\mathcal{L}=\mathcal{L}_1 \oplus \mathcal{L}_2$, 
and $C(x,y)$ be the block diagonal matrix $\mbox{diag}(A(x),B(y))$. We can construct a similar affine linear embedding $\mathcal{C}$ using the fact that the  block diagonal matrices $\{\mbox{diag}(A_i,0) \mid i=1,2,\ldots,n \} \cup \{ \mbox{diag}(0,B_j) \mid j=1,2,\ldots k\}$ are also linearly independent. As $1=\mld_{\det}(\mathcal{C}(\mathcal{L}))=\mld_{fg}(\mathcal{L})$, we conclude that $f(x)g(y)$ is also homaloidal.

In the case when either $A_1, A_2,\ldots, A_n$ or $B_1,B_2,\ldots, B_k$ (or both) are linearly dependent, we use the similar construction as in the proof of Proposition 2.3 in \cite{améndola2023differential} to show that 
\[
\mld_{\det}(\mathcal{A}(\mathcal{L}_1))=\mld_{f^2}(\mathcal{L}_1) \text{ and } \mld_{\det}(\mathcal{B}(\mathcal{L}_2))=\mld_{g^2}(\mathcal{L}_2),
\]
implying that $\mld_{\det}(\mathcal{C}(\mathcal{L}))=\mld_{f^2g^2}(\mathcal{L})=1$. But as $\mld_{f^2g^2}(\mathcal{L})$ is equal to $\mld_{fg}(\mathcal{L})$, we can again conclude that $f(x)g(y)$ is homaloidal.
\end{proof}

\section{Symmetric Determinantal Representation for Homaloidal Polynomials} 
\label{sec: SDR for homaloidals}
Having constructed homaloidal polynomials in \Cref{sec: examples of homaloidals}, the proof of \Cref{prop: homaloidal to det} shows that in order to find the corresponding ML degree one linear concentration models, one needs to express these polynomials as the determinants of matrices with linear entries. This section studies such representations, which are known as symmetric determinantal representations (SDRs); see \Cref{def: coordinate-free ML degree}.

\subsection{Bounds on SDR size}
While it is known that any polynomial $p\in \mathbb{F}[x_1,\ldots,x_n]$ admits a symmetric determinantal representation when $\mathbb{F}$ is a field with characteristic not equal to 2~\cite{SDRshortProof2021}, this subsection provides an upper bound on the size of the matrices in the SDR of a given polynomial when $\mathbb{F}\in \{\mathbb{R},\mathbb{C}\}$.

\begin{definition}
Let $u,v,w$ be three (not necessarily distinct) variables. We call the substitution $u\to v^2$ a \textit{square substitution}, and the substitution $u\to v\cdot w$ a \textit{product substitution}. A \textit{simple substitution} refers to a square or product substitution. \end{definition}
\begin{prop} \label{prop: simple substitutions}
Let $p \coloneqq y^d$ be a polynomial in $\mathbb{F}[y]$. Then up to a renaming of variables, $p$ can be obtained from one variable after $m\coloneqq \lfloor \log_2 d \rfloor + 1$ rounds of simple substitutions such that 
\begin{enumerate}[i.]
    \item there are at most $i$ simple substitutions in round i, where $i\in [m]$; and
    \item there is at most $1$ product substitution in each round.
\end{enumerate}
\end{prop}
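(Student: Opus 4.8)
# Proof Proposal for Proposition~\ref{prop: simple substitutions}

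The plan is to build $y^d$ by a repeated-squaring strategy, reading the binary expansion of $d$ from the most significant bit downward, and to track carefully how many simple substitutions each ``doubling-and-correcting'' step requires. Write $d$ in binary as $d = \sum_{t=0}^{m-1} b_t 2^t$ with $b_{m-1} = 1$, where $m = \lfloor \log_2 d \rfloor + 1$ is exactly the number of binary digits of $d$. I would prove by induction on $i \in [m]$ the statement: after round $i$, one has produced (a power of) a single variable equal to $y^{d_i}$, where $d_i := \sum_{t=m-i}^{m-1} b_t 2^{t-(m-i)}$ is the integer formed by the top $i$ bits of $d$; and this was done using at most $i$ simple substitutions in round $i$, at most one of which is a product substitution.

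The base case $i=1$ is immediate: $d_1 = b_{m-1} = 1$, so we simply take one variable and perform no substitutions (or a single square substitution if one prefers to have ``round 1'' do something — either way the bounds hold). For the inductive step, suppose after round $i$ we have a variable, call it $v$, standing for $y^{d_i}$. Note $d_{i+1} = 2 d_i + b_{m-i-1}$. In round $i+1$ I would first apply the square substitution $v \to v'^2$ (using a fresh variable $v'$ renamed to $v$), which produces $y^{2 d_i}$; this is one square substitution. If $b_{m-i-1} = 0$ we are done with round $i+1$, having used a single substitution. If $b_{m-i-1} = 1$, I need to multiply by one more factor of $y$: introduce the monomial $y^{d_i} \cdot y$ — but to realize ``multiply by $y$'' as a product substitution in the allowed format requires more care, since a product substitution replaces one variable by a product of two \emph{available} variables, and a raw $y$ must itself be generated. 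The clean way is: in round $i+1$, first do the square substitution to get $y^{2d_i}$ and \emph{also} carry along, in parallel and at no extra cost beyond the induction's budget, a copy of $y$ built up from earlier rounds; then do one product substitution $w \to v' \cdot (y\text{-copy})$ to glue them. This uses two simple substitutions in round $i+1$, one of which is a product substitution — matching the bound $i+1$ (which is $\geq 2$) and the ``at most one product substitution per round'' constraint.

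The main obstacle — and the point that needs the most careful bookkeeping — is exactly this ``carrying a copy of $y$'': the definition of simple substitution only lets us combine variables already present, so we must verify that at the start of each round we have access to a clean single variable equal to $y^1$, and that maintaining it does not eat into the per-round substitution budget. I would handle this by strengthening the induction hypothesis to assert that after each round we retain (the name of) a variable equal to $y$, untouched; since square and product substitutions introduce fresh variables and only rename, nothing forces us to destroy that copy, and the substitution count in round $i+1$ is genuinely at most $2 \leq i+1$ for $i \geq 1$ while round $1$ needs none. Finally, applying the hypothesis at $i = m$ gives $y^{d_m} = y^d$, and summing the per-round bounds confirms total control; the two itemized claims (i) and (ii) are then precisely what the induction established round by round. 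I should also note the edge case $d$ a power of $2$, where every $b_t = 0$ except the top, so each round after the first is a single square substitution and no product substitutions occur at all — consistent with both bounds.
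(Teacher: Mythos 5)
Your underlying idea -- repeated squaring along the binary expansion of $d$ -- is the same one the paper uses (the paper assigns one variable to each binary digit $2^{p_\ell}$, introduces it with the round's single product substitution, and tracks the resulting monomial by an explicit formula; you process the digits most-significant-first, Horner style), so the route is not genuinely different, but the two pieces of bookkeeping your sketch leans on are exactly where it fails. First, the strengthened invariant ``after each round we retain a variable equal to $y$, untouched'' cannot be carried to the end: a simple substitution never deletes a variable, and the final step is only a renaming of variables to $y$, so any carried copy that survives all $m$ rounds is a factor of the final monomial and you obtain $y^{d+1}$, not $y^{d}$. Your remark that ``nothing forces us to destroy that copy'' is precisely backwards: the copy attached to the last $1$-bit must be absorbed -- it has to remain in the monomial and be squared in every remaining round (or, if the last $1$-bit is the units bit, simply remain), and after that no further copy may be created. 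Second, creating a copy is itself a product substitution and must be scheduled inside the budget: since your base case performs no substitution in round $1$, a $d$ whose second-highest bit is $1$ (say $d=6$) forces round $2$ to both create the copy and use it, i.e.\ two product substitutions in one round, or a square and a product applied to the same lone variable -- neither is allowed under the convention of \Cref{prop: det rep}, where the substitutions of a round are applied simultaneously to distinct variables of the current polynomial.

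Third, the induction hypothesis ``after round $i$ we have (a power of) a single variable equal to $y^{d_i}$'' is not preserved by the moves you describe: once a copy is glued in, the accumulator consists of several variables (for $d=7$ it is already $u^{2}w_1$ after round $2$), and doubling it in the next round costs one square substitution per accumulator variable, so the count in round $i+1$ is up to $1+\#\{\text{ones processed so far}\}$ rather than your claimed $2$; this still fits the required bound $\le i+1$, but only if you track the number of accumulator variables, which the sketch never does. Likewise the ``glue'' step $w \to v'\cdot(y\text{-copy})$ either is applied to a variable $w$ that is not present in the current monomial, or it consumes the carried copy while creating a fresh one and thus reinstates the off-by-one above. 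All of this is repairable -- for instance one can keep the accumulator a genuine single variable via merging product substitutions $w\to v\cdot s$ (with $v$ the existing accumulator variable and $s$ a fresh copy), treating the last $1$-bit without replenishment -- but the round-by-round accounting this requires is exactly what the paper's proof supplies through the explicit formula \eqref{eq: substitutions induction}, and it is the part missing from your argument.
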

\begin{proof}
    Let $d = \sum_{\ell = 1}^M 2^{p_{\ell}}$, where $p_1,\ldots,p_M$ are non-negative integers and $m -1= p_1 > p_2 >\cdots > p_M$.
    Let the initial variable be $u_1$. For $i\in [m]$, define the simple substitution(s) in round $i$ as follows:
    \begin{align} \label{eq: simple substitutions}
        u_j \to \begin{cases}  
        u_j \cdot u_{j+1} & \text{if $j=i$ and $M \geq j+1$}\\
        u_j^2 & \text{if $j = M$ and $p_M \geq i-M+1$} \\
        u_j^2 & \text{if $j< \min \{i,M\}$ and $p_j \geq i-j$} 
        \end{cases}
        \ \text{for all $j\in [\min \{i,M\}]$.}
    \end{align}
    By induction on $i$, we will show that for $i\in [m]$, after round $i$, we get 
    \begin{align} \label{eq: substitutions induction}
        \prod_{j=1}^{k_i} u_j^{2^{\min \{ i-j,p_j  \}}} \cdot u_{k_i+1}^{2^{\min\{i-k_i,p_{k_i+1}\}}},
    \end{align}
    where $k_i=\min\{i,M-1\}$. Then given that $M\leq m$, we can confirm after the $m$th round, we get 
    $
        \prod_{j=1}^{M-1} u_j^{2^{p_j}}\cdot u_M^{2^{p_M}}.
    $
    Eventually, renaming all the variables $u_1,\ldots,u_M$ to $y$ results in $y^{2^{p_1}+\cdots +2^{p_{M-1}}+2^{p_M}} = y^d = p$.
    
    For proving \eqref{eq: substitutions induction}, first note that after round $1$, if $M\geq 2$, we get $u_1\cdot u_2$, if $M=1$ and $p_1\geq 1$, we get $u_1^2$, and if $M=1$ and $p_1=0$, we get $u_1$. Now assume that for some $i\in [m-1]$, after round $i$, we do get \eqref{eq: substitutions induction}. 
    
    For all $j\in [k_i]$, we have $j< k_i+1 = \min\{i+1,M\}$. So, by \eqref{eq: simple substitutions}, in round $i+1$, the simple substitution $u_j \to u_j^2$ happens only if $p_j\geq i+1-j$. Thus, one of the following happens:
    \begin{enumerate}[i.]
        \item If $p_j\geq i+1-j$, we have $u_j^{2^{\min \{ i-j,p_j  \}}} = u_j^{i-j}$, and after round $i+1$, it changes to 
        $\left(u_j^2\right)^{2^{i-j}} = u_j^{2^{i+1-j}} = u_j^{2^{\min \{ i+1-j,p_j  \}}}$.
        \item If $p_j < i+1-j$, then no substitution happens for $u_j$ in round $i+1$, and we have 
    $u_j^{2^{\min \{ i-j,p_j  \}}} = u_j^{2^{p_j}} = u_j^{2^{\min \{ i+1-j,p_j  \}}}$.
    \end{enumerate}
     So, in any case, the first part of \eqref{eq: substitutions induction} changes to $\prod_{j=1}^{k_i} u_j^{2^{\min \{ i+1-j,p_j  \}}}$ after round $i+1$.
    
    Moreover, one of the following happens to $u_{k_i+1}$ in round $i+1$:
    \begin{enumerate}[i.]
        \item If $M\geq i+2$, then $k_{i+1}=i+1=k_i+1$. So, according to \eqref{eq: simple substitutions}, the simple substitution $u_{k_i+1} \to u_{k_i+1} \cdot u_{k_i+2}$ happens in round $i+1$, which means that $u_{k_i+1}^{2^{\min\{i-k_i,p_{k_i+1}\}}}=u_{k_i+1}$ changes to 
        \begin{align*}
            u_{k_i+1}\cdot u_{k_i+2} = u_{k_{i+1}}^{2^{\min\{i+1-k_{i+1},p_{k_{i+1}}\}}} \cdot u_{k_{i+1}+1}^{2^{\min\{i+1-k_{i+1},p_{k_{i+1}+1}\}}}.
        \end{align*}
        This proves the claim in this case.
        \item If $M \leq i+1$, then $k_{i+1}=k_i=M-1$. So, by \eqref{eq: simple substitutions}, only if $p_M \geq i-M+2$, the substitution $u_{k_i+1} \to u_{k_i+1}^2$ happens in round $i+1$. Therefore, if $p_M \geq i-M+2$, then in round $i+1$, $u_{k_i+1}^{2^{\min\{i-k_i,p_{k_i+1}\}}}= u_{k_i+1}^{2^{i-M+1}}$ changes to
        \begin{align*}
          \left(  u_{k_i+1}^2\right)^{2^{i-M+1}} =  u_{k_{i+1}+1} ^{2^{\min\{i+1-k_{i+1},p_{k_{i+1}+1}\}}}.
        \end{align*}
        On the other hand, if $p_M \leq i-M+1$, then no substitution happens for $u_{k_i+1}$ in round $i+1$, and we have
        \begin{align*}
            u_{k_i+1}^{2^{\min\{i-k_i,p_{k_i+1}\}}} = u_{M} ^ {p_M} = u_{k_{i+1}+1} ^{2^{\min\{i+1-k_{i+1},p_{k_{i+1}+1}\}}}.
        \end{align*}
        In either case, the claim is proved.
    \end{enumerate}
    This concludes the proof of \eqref{eq: substitutions induction}, and hence, the proposition.
\end{proof}

\begin{remark} \label{remark: simple substitutions}
    According to the proof of \Cref{prop: simple substitutions}, if $d=2^{m-1}$ for some $m\in \mathbb{N}$, then $p=y^d$ can be obtained from one variable after $m-1$ rounds of simple substitutions, each consisting of merely one square substitution.
\end{remark}
The proof of the following proposition is motivated by the construction proposed in~\cite{SDRshortProof2021}.
\begin{prop} \label{prop: det rep}
    Assume the polynomial $q(x_1,\ldots,x_r,u_1,\ldots,u_s,v_1,\ldots,v_t,w_1,\ldots,w_t)$ is obtained from the polynomial $p(x_1,\ldots,x_r,y_1,\ldots,y_s,z_1,\ldots,z_t)$ after the set of square substitutions $\{y_1 \to u_1^2, \ldots, y_s \to u_s^2\}$ and the set of product substitutions $\{z_1 \to v_1\cdot w_1, \ldots, z_t \to v_t \cdot w_t\}$. If $p$ has a determinantal representation consisting of symmetric matrices of size $k$, then $q$ has a determinantal representation consisting of symmetric matrices of size at most $k (1+2s+4t) + 1$.
\end{prop}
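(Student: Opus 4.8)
The plan is to start from a symmetric determinantal representation $p=\det M$ with
\[
M \;=\; A_0 + \sum_{i=1}^r x_i A_i + \sum_{j=1}^s y_j B_j + \sum_{l=1}^t z_l C_l ,
\]
all $A_i,B_j,C_l$ symmetric $k\times k$ matrices, and to set $A := A_0 + \sum_i x_i A_i$, so that $q = \det\!\bigl(A + \sum_{j} u_j^2 B_j + \sum_{l} v_l w_l C_l\bigr)$. The task is then to exhibit a \emph{symmetric} matrix that is \emph{affine-linear} in $x,u,v,w$ and whose Schur complement recovers $A + \sum_j u_j^2 B_j + \sum_l v_l w_l C_l$. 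Crucially, all $s+t$ substitutions must be built in simultaneously: performing them one at a time would multiply the matrix size by a constant factor at each step and blow up exponentially in $s+t$.

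First I would assemble the following ingredients. Let $N := sk + 2tk$ and set
\[
D := \operatorname{diag}\!\Bigl(B_1,\dots,B_s,\ \tfrac12\begin{pmatrix}0 & C_1\\ C_1 & 0\end{pmatrix},\ \dots,\ \tfrac12\begin{pmatrix}0 & C_t\\ C_t & 0\end{pmatrix}\Bigr),
\]
a constant symmetric $N\times N$ matrix; let $I_N$ be the $N\times N$ identity; and let $U$ be the $k\times N$ matrix whose block columns are $u_1 I_k,\dots,u_s I_k$ followed by the pairs $v_l I_k,\ w_l I_k$ for $l=1,\dots,t$. Then put
\[
\widetilde M := \begin{pmatrix} A & 0 & U\\ 0 & D & I_N\\ U^T & I_N & 0\end{pmatrix},
\]
which is symmetric, affine-linear in all the variables, and of size $k + 2N = k(1+2s+4t)$. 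The one genuinely clever point is the choice of the anti-diagonal blocks $\tfrac12\begin{pmatrix}0&C_l\\ C_l&0\end{pmatrix}$: they are rigged precisely so that $\begin{pmatrix} v_l I_k & w_l I_k\end{pmatrix}\cdot\tfrac12\begin{pmatrix}0&C_l\\ C_l&0\end{pmatrix}\cdot\begin{pmatrix} v_l I_k\\ w_l I_k\end{pmatrix} = v_l w_l C_l$, i.e. a symmetric block still yields the bilinear term $v_l w_l C_l$ rather than the off-diagonal matrix $\begin{pmatrix}0 & v_l w_l C_l\\ v_l w_l C_l & 0\end{pmatrix}$.

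Next I would compute $\det\widetilde M$ by forming the Schur complement with respect to the lower-right $2N\times 2N$ block $\begin{pmatrix} D & I_N\\ I_N & 0\end{pmatrix}$. This block has determinant $(-1)^N$ (subtract $D$ times the second block-row from the first) and inverse $\begin{pmatrix}0 & I_N\\ I_N & -D\end{pmatrix}$, independently of $D$, so no invertibility hypothesis is needed; since the block coupling $A$ to it is $\begin{pmatrix}0 & U\end{pmatrix}$, the Schur complement is
\[
A - \begin{pmatrix}0 & U\end{pmatrix}\begin{pmatrix}0 & I_N\\ I_N & -D\end{pmatrix}\begin{pmatrix}0\\ U^T\end{pmatrix} \;=\; A + U D U^T \;=\; A + \sum_{j=1}^s u_j^2 B_j + \sum_{l=1}^t v_l w_l C_l .
\]
Hence $\det\widetilde M = (-1)^N\det\!\bigl(A + \sum_j u_j^2 B_j + \sum_l v_l w_l C_l\bigr) = (-1)^{sk} q$ (using $N = sk+2tk$). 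If $sk$ is even this is already a symmetric determinantal representation of $q$ of size $k(1+2s+4t)$; if $sk$ is odd, passing to $\begin{pmatrix}\widetilde M & 0\\ 0 & -1\end{pmatrix}$ flips the overall sign and yields one of size $k(1+2s+4t)+1$, which is exactly the ``$+1$'' and the ``at most'' in the statement.

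I expect the only real obstacle to be the design of $\widetilde M$ itself — producing a single matrix that is at once symmetric, affine-linear in $u,v,w$, of the claimed size, and with the correct Schur complement — in particular handling the product substitutions with symmetric blocks via the $\tfrac12\begin{pmatrix}0&C_l\\C_l&0\end{pmatrix}$ device and packaging all $s+t$ substitutions in parallel. Once $\widetilde M$ is written down, the remaining verification (symmetry, linearity, size, and the determinant identity) is a routine Schur-complement computation, valid verbatim over both $\mathbb{R}$ and $\mathbb{C}$ since the construction divides by nothing worse than $2$.
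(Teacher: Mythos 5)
Your proof is correct. The verification goes through: $\widetilde M$ is symmetric and affine-linear in all variables with symmetric coefficient matrices, the constant lower-right block $\begin{pmatrix} D & I_N\\ I_N & 0\end{pmatrix}$ is invertible with inverse $\begin{pmatrix} 0 & I_N\\ I_N & -D\end{pmatrix}$ and determinant $(-1)^{N}=(-1)^{sk}$ regardless of $D$, the Schur complement is $A+\sum_j u_j^2B_j+\sum_l v_lw_lC_l$, and the $\pm 1$ correction block gives size at most $k(1+2s+4t)+1$. Your overall strategy is the same as the paper's (border the SDR matrix of $p$ so that a Schur complement realizes the substituted polynomial, then repair the leftover constant determinant factor with one extra diagonal entry, in the spirit of Stefan--Welters), but the gadget differs and is in some respects cleaner. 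The paper borders with two copies of $u_jI_k$ per square substitution and four copies of $\frac12(v_l\pm w_l)I_k$ per product substitution (polarization), against constant diagonal blocks $-(B_j-\lambda_jI_k)^{-1}$, $-\lambda_j^{-1}I_k$, $\pm(C_l-\gamma_lI_k)^{-1}$, $\pm\gamma_l^{-1}I_k$; this requires choosing nonzero $\lambda_j,\gamma_l$ outside the spectra of $B_j,C_l$, and its final correction entry is $(\det\tilde M)^{-1}$, an arbitrary nonzero scalar whose possible non-reality the paper must then address in a remark. Your construction instead couples the constant block $D$ to an identity block, so no invertibility or eigenvalue-avoidance hypotheses are needed, it handles product substitutions directly through the anti-diagonal blocks $\frac12\begin{pmatrix}0&C_l\\ C_l&0\end{pmatrix}$ rather than via $v_lw_l=\frac14\left((v_l+w_l)^2-(v_l-w_l)^2\right)$, and its correction factor is only $(-1)^{sk}$, fixed by a single $-1$ entry; it achieves exactly the same size bound and works verbatim over any field of characteristic different from $2$, which also preserves realness of the representation automatically.
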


\begin{proof}
Let the determinantal representation of $p$ be as follows:
\begin{align*}
    p(x_1,\ldots,x_r,y_1,\ldots,y_s,z_1,\ldots,z_t) = \det\left(A_0 + \sum_{i=1}^r x_i A_i + \sum_{i=1}^ s y_i B_i + \sum_{i=1}^ t z_i C_i\right),
\end{align*}
where $A_0,\ldots,A_r,B_1,\ldots,B_s,C_1,\ldots,C_t$ are constant symmetric $k\times k$ matrices. Define
\begin{align*}
    M \coloneqq \begin{pmatrix}
        A_0 + \sum_{i=1}^r x_i A_i &  X^T  & Y^T\\
        X & \diag D_1 & 0\\
        Y & 0 & \diag D_2
    \end{pmatrix},
\end{align*}
where
\begin{align*}
    &X \coloneqq \begin{pmatrix}
        u_1 I_k \\ u_1 I_k \\ \vdots \\ u_s I_k \\ u_s I_k 
    \end{pmatrix},
    &&Y \coloneqq \begin{pmatrix}
        \frac12(v_1+w_1) I_k \\ \frac12(v_1-w_1) I_k \\ \frac12(v_1+w_1) I_k \\ \frac12(v_1-w_1) I_k \\ \vdots \\ \frac12(v_t+w_t) I_k \\ \frac12(v_t-w_t) I_k \\ \frac12(v_t+w_t) I_k \\ \frac12(v_t-w_t) I_k
    \end{pmatrix},
    &&D_1 \coloneqq \begin{pmatrix}
          -(B_1-\lambda_1I_k)^{-1}\\
          -\lambda_1^{-1}I_k\\
          \vdots  \\
          -(B_s-\lambda_sI_k)^{-1}\\
          -\lambda_s^{-1}I_k
    \end{pmatrix},
    &&D_2 \coloneqq \begin{pmatrix}
        -(C_1-\gamma_1I_k)^{-1} \\
        (C_1-\gamma_1I_k)^{-1} \\
        -\gamma_1^{-1}I_k  \\
         \gamma_1^{-1}I_k  \\
        \vdots \\
        -(C_t-\gamma_tI_k)^{-1} \\
         (C_t-\gamma_tI_k)^{-1}  \\
         -\gamma_t^{-1}I_k \\
         \gamma_t^{-1} I_k
    \end{pmatrix}
\end{align*}
such that for all $i\in [s]$, $\lambda_i\in \mathbb{R}\setminus\{0\}$ is not an eigenvalue of $B_i$, and for all $i\in [t]$, $\gamma_i \in \mathbb{R}\setminus \{0\}$ is not an eigenvalue of $C_i$. So, for all $i\in [s]$, $B_i - \lambda_i I_k$, and for all $i\in [t]$, $C_i-\gamma_i I_k$ are invertible matrices. 

Then the Schur complement of $M$ with respect to the block 
$\tilde M \coloneqq 
\begin{pmatrix}
    \diag D_1 & 0 \\ 0 & \diag D_2
\end{pmatrix}
$
is 
\begin{align*}
\frac{M}{\tilde{M}} =& A_0 + \sum_{i=1}^r x_i A_i - \begin{pmatrix} X^T & Y^T \end{pmatrix}\cdot (\tilde{M})^{-1} \cdot \begin{pmatrix} X \\ Y \end{pmatrix} \\
                    =& A_0 + \sum_{i=1}^r x_i A_i - X^T\left(\diag D_1\right)^{-1} X - Y^T\left(\diag D_2\right)^{-1} Y\\
                    =& A_0 + \sum_{i=1}^r x_i A_i - \sum_{i=1}^s u_i^2 (-(B_i - \lambda_i I_k)) - \sum_{i=1}^s u_i^2 (-\lambda_iI_k)\\
                    -&\sum_{i=1}^t \frac14 (v_i+w_i)^2 (-(C_i-\gamma_i I_k))  -\sum_{i=1}^t \frac14 (v_i-w_i)^2 (C_i-\gamma_i I_k) \\
                    -&\sum_{i=1}^t \frac14 (v_i+w_i)^2 (-\gamma_iI_k) - \sum_{i=1}^t \frac14 (v_i-w_i)^2 (\gamma_iI_k)\\
                    =& A_0 + \sum_{i=1}^r x_i A_i +\sum_{i=1}^s u_i^2 B_i + \sum_{i=1}^t \frac14 (v_i+w_i)^2 C_i -\sum_{i=1}^t \frac14 (v_i-w_i)^2 C_i \\
                    =&  A_0 + \sum_{i=1}^r x_i A_i +\sum_{i=1}^s u_i^2 B_i + \sum_{i=1}^t v_i w_i C_i.
\end{align*}
Therefore,
\begin{align*}
    \det(M) = \det(\tilde{M})\cdot \det\left(\frac{M}{\tilde{M}}\right) = \det(\tilde{M}) \cdot p|_{y_i \to u_i ^2 \text{ for all }i \in [s],\ z_i \to v_iw_i \text{ for all } i\in [t]} = \det(\tilde{M}) \cdot q,
\end{align*}
which means that 
\begin{align*}
    q = \frac{1}{\det(\tilde{M})} \cdot \det(M) = \det \begin{pmatrix}M & 0 \\ 0 & (\det(\tilde{M}))^{-1} \end{pmatrix}.
\end{align*}
This is a symmetric determinantal representation for $q$ given that $\tilde{M}$ is a constant matrix. Note that $M$ is a matrix of size $k(1+2s+4t)$, so, we have expressed $q$ as the determinant of a symmetric matrix of size $k(1+2s+4t)+1$.
\end{proof}
\begin{remark} \label{remark: det rep}
In the construction outlined in the proof of \Cref{prop: det rep}, for any $i\in [s]$, if $B_i$ is an invertible matrix, $\lambda_i$ can be chosen to be $0$ and the rows related to $-\lambda_i^{-1}I_k$ in $D_1$ along with the corresponding rows in $X$ can be omitted. Similarly, for any $i\in [t]$, if $C_i$ is an invertible matrix, $\gamma_i$ can be chosen to be 0 and the rows related to $-\gamma_i^{-1} I_k$ and $\gamma_i^{-1} I_k$ in $D_2$ along with the corresponding rows in $Y$ can be removed. Hence, if all of the matrices $B_1,\ldots,B_s$ and $C_1,\ldots,C_t$ are invertible, then $q$ can be expressed as the determinant of a symmetric matrix of size $k(1+s+2t)+1$.

Moreover, in the last line of the proof, $q=\frac{1}{\det(\tilde{M})} \cdot \det(M)$ can be also viewed as 
\begin{align}\label{eq alternative matrix for det rep}
    \det \left( (\det (\tilde{M}))^{\frac{-1}{m}} \cdot M \right),
\end{align}
where $m$ is the size of matrix $M$. The determinantal representation \eqref{eq alternative matrix for det rep} has been avoided in the original proof due to the fact that in some cases, $(\det (\tilde{M}))^{\frac{-1}{m}}$ can be a non-real complex number. The construction proposed in the proof, however, is guaranteed to express $q$ as the determinant of a real symmetric matrix if $p$ can be written as the determinant of a real symmetric matrix.
\end{remark}
\begin{prop} \label{prop: expressing a poly of degree d as a sum of degree d's}
    Suppose $\mathbb{F}$ is either the field of real numbers or the field of complex numbers. Let $p\in \mathbb{F}[x_1,\ldots,x_n]$ be a polynomial of degree at most $d$ defined as follows:
    \begin{align} \label{eq: degree d polynomial}
        p(x_1,\ldots,x_n) = \sum_{\substack{i_1,\ldots,i_n\in \mathbb{Z}_{\geq 0} \\ \vspace{0.5mm} \\ i_1+\cdots+i_n\leq d}} \alpha_{i_1,\ldots,i_n} \ x_1^{i_1} \cdots x_n^{i_n},
    \end{align}
    where $\alpha_{i_1,\ldots,i_n}\in \mathbb{F}$ for all tuples $(i_1,\ldots,i_n)\in \mathbb{Z}^n_{\geq 0}$ with $i_1+\cdots+i_n\leq d$. Define the tensor $T\in S^d(\mathbb{F}^{n+1})$ such that
    \begin{align}\label{eq: coeff tensor}
        T_{\underbrace{1\ldots 1}_{i_1}\ \ldots \ \underbrace{n\ldots n}_{i_n} \ \underbrace{(n+1)\ldots (n+1)}_{i_{n+1}}}\coloneqq \frac{\alpha_{i_1,\ldots,i_n}}{\binom{d}{i_1,\ldots,i_{n+1}}}
    \end{align}
    for all $i_1,\ldots,i_{n+1}\in \mathbb{Z}_{\geq 0}$ with $i_1+\cdots+i_{n+1}=d$. 
    If $T$ has the symmetric decomposition $T = \sum_{i=1}^r \lambda_i w_i^{\otimes d}$, where $\lambda_1,\ldots,\lambda_r\in \mathbb{F}$ and $w_1,\ldots,w_r\in \mathbb{F}^{n+1}$, then  $p$ can be expressed as $p=\sum_{i=1}^r \lambda_i f_i^d$, where for all $i\in [r]$, $f_i\in \mathbb{F}[x_1,\ldots,x_n]$ is an affine polynomial.
\end{prop}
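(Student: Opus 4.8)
The plan is to pass through the standard dictionary between homogeneous forms of degree $d$ in $n+1$ variables and symmetric tensors of order $d$ on $\mathbb{F}^{n+1}$, and to translate the given symmetric decomposition of $T$ directly into a power-sum (Waring-type) decomposition of a homogenization of $p$. First I would homogenize: introduce an auxiliary variable $x_{n+1}$ and set
\[
\bar p(x_1,\ldots,x_{n+1}) := \sum_{i_1+\cdots+i_n\le d} \alpha_{i_1,\ldots,i_n}\, x_1^{i_1}\cdots x_n^{i_n}\, x_{n+1}^{\,d-(i_1+\cdots+i_n)},
\]
which is a homogeneous polynomial of degree $d$ in $\mathbb{F}[x_1,\ldots,x_{n+1}]$ satisfying $\bar p(x_1,\ldots,x_n,1)=p(x_1,\ldots,x_n)$. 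Recovering $p$ from a decomposition of $\bar p$ will then just be a matter of dehomogenizing, i.e.\ substituting $x_{n+1}=1$.

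Next I would introduce the $\mathbb{F}$-linear contraction map $\Theta$ that sends a symmetric tensor $U\in S^d(\mathbb{F}^{n+1})$ to the form $\Theta(U)(x):=\langle U, x^{\otimes d}\rangle=\sum_{j_1,\ldots,j_d} U_{j_1\cdots j_d}\, x_{j_1}\cdots x_{j_d}$. Grouping this $d$-fold sum according to the multiset of indices, the coefficient of the monomial $x_1^{i_1}\cdots x_{n+1}^{i_{n+1}}$ in $\Theta(U)$ is the multinomial coefficient $\binom{d}{i_1,\ldots,i_{n+1}}$ times the common value (by symmetry of $U$) of the tensor entries on those indices. Comparing this with the normalization \eqref{eq: coeff tensor}, I get exactly $\Theta(T)=\bar p$. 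Here I use that $\mathbb{F}$ has characteristic $0$, so all the multinomial coefficients are invertible; this is both what makes the definition of $T$ meaningful and what makes $\Theta$ a bijection onto the space of degree-$d$ forms. The key remaining observation is that on a rank-one tensor $\Theta(w^{\otimes d})(x)=\langle w^{\otimes d},x^{\otimes d}\rangle=\langle w,x\rangle^d=(w_1x_1+\cdots+w_{n+1}x_{n+1})^d$, a $d$-th power of a linear form.

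From here the conclusion is immediate. Applying $\Theta$ to $T=\sum_{i=1}^r\lambda_i w_i^{\otimes d}$ and using linearity gives $\bar p=\Theta(T)=\sum_{i=1}^r\lambda_i\,\langle w_i,x\rangle^d$ in $\mathbb{F}[x_1,\ldots,x_{n+1}]$. Setting $x_{n+1}=1$ and defining $f_i(x_1,\ldots,x_n):=w_{i,1}x_1+\cdots+w_{i,n}x_n+w_{i,n+1}$, which is an affine polynomial, yields $p=\sum_{i=1}^r\lambda_i f_i^{\,d}$, as claimed. I do not expect a genuine obstacle in this argument; the only point requiring care is the coefficient bookkeeping, namely verifying that the multinomial factors introduced by the symmetrization built into $\Theta$ cancel precisely the denominators $\binom{d}{i_1,\ldots,i_{n+1}}$ in \eqref{eq: coeff tensor}, so that $\Theta(T)$ is $\bar p$ on the nose rather than a rescaled version of it. Everything else is formal.
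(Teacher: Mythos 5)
Your proof is correct and is essentially the same argument as the paper's: the paper contracts $T$ directly with the vector $(x_1,\ldots,x_n,1)$, so that the multinomial coefficients from the symmetrized contraction cancel the denominators in \eqref{eq: coeff tensor} and each rank-one summand $w_i^{\otimes d}$ contributes $\langle \mathbf{x},w_i\rangle^d$ with $f_i=\langle\mathbf{x},w_i\rangle$ affine. Your only deviation is passing through the explicit homogenization $\bar p$ and then setting $x_{n+1}=1$, which is the same computation phrased projectively.
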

\begin{proof}
    Define $\mathbf{x}\coloneqq  [\begin{array}{cccc} x_1 & \cdots & x_n & 1\end{array}]$, and denote the contraction of any tensor $S$ with $\mathbf{x}$ along the $i$th way of $S$ by $S\bullet_i \mathbf{x}$. The contraction of $T$ with vector $\mathbf{x}$ along each of the ways of $T$ gives us
\begin{align}
   \contraction =& \sum_{\ell_1=1}^{n+1} \ldots \sum_{\ell_d=1}^{n+1} \mathbf{x}_{\ell_1} \cdots \mathbf{x}_{\ell_d} \ T_{\ell_1\ldots\ell_d} \notag\\
                                                =&\sum_{\substack{i_1,\ldots,i_n,i_{n+1}\in \mathbb{Z}_{\geq 0} \\ \vspace{0.5mm} \notag \\ i_1+\cdots+i_n+i_{n+1}=d}}  \binom{d}{i_1,\ldots,i_{n+1}} \ T_{\underbrace{1\ldots 1}_{i_1}\ \ldots \ \underbrace{n\ldots n}_{i_n} \ \underbrace{(n+1)\ldots (n+1)}_{i_{n+1}}} \  \mathbf{x}_1^{i_1}\cdots \mathbf{x}_n^{i_n} \ \mathbf{x}_{n+1}^{i_{n+1}} \notag \\
                                                =&\sum_{\substack{i_1,\ldots,i_n\in \mathbb{Z}_{\geq 0} \\ \vspace{0.5mm} \\ i_1+\cdots+i_n\leq d}}  \alpha_{i_1,\ldots,i_n} x_1^{i_1}\cdots x_n^{i_n} 
                                                = p(x_1,\ldots,x_n).  \notag
\end{align}
Now given the symmetric decomposition of $T$,  
\begin{align*}
    &p(x_1,\ldots,x_n) = \contraction\\
    =& \sum_{\ell_1=1}^{n+1} \ldots \sum_{\ell_d=1}^{n+1} \mathbf{x}_{\ell_1} \cdots \mathbf{x}_{\ell_d} \ T_{\ell_1\ldots\ell_d}
    =\sum_{\ell_1=1}^{n+1} \ldots \sum_{\ell_d=1}^{n+1} \mathbf{x}_{\ell_1} \cdots \mathbf{x}_{\ell_d} \left(\sum_{i=1}^r \lambda_i (w_i)_{\ell_1}\cdots(w_i)_{\ell_{d}}\right)\\
    =&\sum_{i=1}^r \lambda_i \left( \sum_{\ell=1}^{n+1} \mathbf{x}_\ell\  (w_{i})_\ell \right)^d 
    = \sum_{i=1}^r \lambda_i \langle \mathbf{x},w_i \rangle^d.
\end{align*}
For each $i\in [r]$, define $f_i \coloneqq \langle  \mathbf{x},w_i\rangle$, and the proof is concluded.
\end{proof}

\begin{theorem}\label{thm: SDR upperbound}
Assume $\mathbb{F}$ is either the field of real numbers or the field of complex numbers, and  $p\in \mathbb{F}[x_1,\ldots,x_n]$ is a polynomial of degree at most $d$ as described in \eqref{eq: degree d polynomial}. Let $r$ be the symmetric rank of tensor \eqref{eq: coeff tensor} over $\mathbb{F}$. Then the following hold:
\begin{enumerate}[i.]
    \item If $m\coloneqq \lfloor \log_2 d \rfloor + 1$, then $p$ has a symmetric determinantal representation with matrices  of size at most $2^{m-1} r^{m} (m+2)!$ with entries in $\mathbb{F}$.
    \item If $d=2^{m-1}$, then $p$ has a symmetric determinantal representation with matrices of size at most $(2r+2)^{m-1}$ with entries in $\mathbb{F}$.
\end{enumerate}
\end{theorem}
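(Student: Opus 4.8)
The plan is to chain together \Cref{prop: expressing a poly of degree d as a sum of degree d's}, \Cref{prop: simple substitutions}, and \Cref{prop: det rep}: first turn $p$ into a sum of $r$ many $d$-th powers of affine forms, then realize the single power $y^d$ as the outcome of a short, controlled sequence of simple substitutions, and finally propagate a determinantal representation through those substitutions while tracking the growth in matrix size. Concretely, with $r$ the symmetric rank of the tensor in \eqref{eq: coeff tensor} over $\mathbb{F}$, fix a symmetric decomposition $T = \sum_{i=1}^r \lambda_i w_i^{\otimes d}$ with $\lambda_i \in \mathbb{F}$ and $w_i \in \mathbb{F}^{n+1}$. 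By \Cref{prop: expressing a poly of degree d as a sum of degree d's} this yields $p = \sum_{i=1}^r \lambda_i f_i^d$ with each $f_i = \langle \mathbf{x}, w_i \rangle$ affine in $x_1,\dots,x_n$. Since substituting affine forms into a symmetric pencil $A_0 + \sum_j y_j A_j$ again produces a symmetric pencil of the same size (realizing the substituted polynomial as its determinant), it suffices to build an SDR of controlled size for the ``formal'' polynomial $P \coloneqq \sum_{i=1}^r \lambda_i y_i^d$ in fresh variables $y_1,\dots,y_r$ and then set $y_i = f_i$; the $\lambda_i$ are carried along merely as coefficients, so no $d$-th roots of the $\lambda_i$ are ever extracted.

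\textbf{Construction.} Start from $P_0 \coloneqq \sum_{i=1}^r \lambda_i t_i$, which is its own SDR (a $1\times 1$ matrix), so $k_0 \coloneqq 1$. By \Cref{prop: simple substitutions} there is a sequence of $m = \lfloor \log_2 d\rfloor + 1$ rounds of simple substitutions taking one variable to $y^d$, with at most $j$ simple substitutions and at most one product substitution in round $j$. For $j = 1,\dots,m$ I would carry out the $j$-th \emph{mega-round}: execute round $j$ of that procedure in parallel in each of the $r$ ``tracks'' of variables descended from $t_1,\dots,t_r$. A mega-round is a single batch of $S_j$ square and $T_j$ product substitutions on pairwise distinct fresh variables, with $S_j + T_j \le rj$ and $T_j \le r$, so \Cref{prop: det rep} applies and
\begin{equation*}
    k_j \;\le\; k_{j-1}\,(1 + 2S_j + 4T_j) + 1 \;\le\; k_{j-1}\,\bigl(1 + 2r(j+1)\bigr) + 1,
\end{equation*}
using $2S_j + 4T_j = 2(S_j + T_j) + 2T_j \le 2rj + 2r$. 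After mega-round $m$ the current polynomial is $\sum_{i=1}^r \lambda_i M_i$, where $M_i$ is a monomial in the track-$i$ variables collapsing to $y^d$ once those variables are identified; substituting every track-$i$ variable by $f_i(x_1,\dots,x_n)$ then yields $\sum_i \lambda_i f_i^d = p$ with an SDR of size at most $k_m$.

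\textbf{The estimates.} For (i), unwinding the recursion gives $k_m \le \sum_{\ell=0}^m \prod_{j=\ell+1}^m \bigl(1 + 2r(j+1)\bigr)$. Using $r \ge 1$ (hence $1 + 2r(j+1) \le 2r(j+2)$), the observation that round $m$ involves at most $m-1$ simple substitutions, and $\prod_{j=\ell+1}^m (1 + 2r(j+1)) \le (2r)^{m-\ell}\,(m+2)!/(\ell+2)!$, one sums the resulting geometric-type series to reach $k_m \le 2^{m-1} r^m (m+2)!$. For (ii), when $d = 2^{m-1}$, \Cref{remark: simple substitutions} shows each of the $m-1$ rounds is one square substitution, so every mega-round has $S_j = r$, $T_j = 0$ and the recursion is $k_j \le (2r+1)k_{j-1} + 1$; with $k_0 = 1$ an immediate induction gives $k_j \le (2r+2)^j$, so $k_{m-1} \le (2r+2)^{m-1}$, and $y_i = f_i$ transports the bound to $p$.

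\textbf{Main obstacle.} Everything apart from the numerical estimate in (i) should be routine: applying \Cref{prop: det rep} to the parallel batch of a mega-round is valid precisely because the substituted variables are pairwise distinct and fresh; affine substitution preserves an SDR and its size; and starting from $1\times 1$ matrices over $\mathbb{F}\in\{\mathbb{R},\mathbb{C}\}$ keeps every matrix over $\mathbb{F}$ provided one uses the extra-scalar-row form in \Cref{prop: det rep} (and \Cref{remark: det rep}) instead of the $(\det\tilde M)^{-1/m}$ rescaling. The step I expect to require genuine care is verifying that the telescoped product of the factors $1 + 2r(j+1)$, together with the $m+1$ accumulated additive constants, stays below $2^{m-1} r^m (m+2)!$: the binary-expansion structure of $d$ pins the worst case at $d = 2^m - 1$, where the inequality can be checked directly for small $m$ and holds with wide slack for large $m$ because there $1 + 2r(j+1) \le 2r(j+2)$ is far from tight.
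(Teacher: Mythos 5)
Your proposal follows essentially the same route as the paper's proof: reduce to $\sum_{i=1}^r \lambda_i y_i^d$ via \Cref{prop: expressing a poly of degree d as a sum of degree d's} and substitute the affine forms $f_i$ at the end, run the substitution schedule of \Cref{prop: simple substitutions} in $r$ parallel tracks, and grow the representation with \Cref{prop: det rep}, with the same recursion $k_j \le k_{j-1}\bigl(1+2r(j+1)\bigr)+1$ and the same treatment of part (ii). The only real divergence is how you close the estimate in part (i), and there your sketch does not quite work as stated: unwinding the recursion into $\sum_{\ell=0}^m \prod_{j=\ell+1}^m \bigl(1+2r(j+1)\bigr)$ and bounding each product by $(2r)^{m-\ell}(m+2)!/(\ell+2)!$ cannot finish, because the $\ell=0$ term of that bound already equals $2^{m-1}r^m(m+2)!$, i.e.\ the entire budget, so the remaining terms push you over unless you exploit extra slack (your appeal to the binary expansion of $d$ and to round $m$ having fewer substitutions is exactly an attempt to recover that slack, but it is not needed). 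The obstacle you flag dissolves with the paper's one-line absorption: since $k_{j-1}\ge 1$ and $r\ge 1$, one has $k_{j-1}\bigl(1+2r(j+1)\bigr)+1 \le k_{j-1}\bigl(2+2r(j+1)\bigr) \le k_{j-1}\cdot 2r(j+2)$, whence $k_m \le \prod_{j=1}^m 2r(j+2) = (2r)^m\,(m+2)!/2 = 2^{m-1}r^m(m+2)!$, with no sum over $\ell$ and no case analysis. This is the same absorption trick ($k(2r+1)+1 \le k(2r+2)$) that you already use correctly in part (ii); everything else in your argument (parallel freshness of the substituted variables, affine substitution preserving size, staying over $\mathbb{R}$ via the extra-row form of \Cref{prop: det rep} and \Cref{remark: det rep}) matches the paper.
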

\begin{proof}
    In either of the above cases, the proof is concluded if $\sum_{i=1}^r \lambda_i y_i^d\in \mathbb{F}[y_1,\ldots,y_r]$ has a symmetric determinantal representation of the desired size. This is because each $y_i$, $i\in [r]$, can be replaced by $f_i$ in the determinantal representation, where $f_i$ is the affine polynomial mentioned in \Cref{prop: expressing a poly of degree d as a sum of degree d's}. Note that the coefficients $\lambda_i$ can also be considered to be those mentioned in \Cref{prop: expressing a poly of degree d as a sum of degree d's}.

    By \Cref{prop: simple substitutions}, the polynomial $\sum_{i=1}^r \lambda_i y_i^d$ can be obtained from the polynomial $\sum_{i=1}^r \lambda_i u_i \in \mathbb{F}[u_1,\ldots,u_r]$ (up to a renaming of variables) after at most $m$ rounds of simple substitutions such that round $i$, $i\in [m]$, has at most $r\cdot i$ simple substitutions and there is at most $r$ product substitutions in each round. By \Cref{remark: simple substitutions}, if $d=2^{m-1}$, then the polynomial $\sum_{i=1}^r \lambda_i y_i^d$ can be obtained from the polynomial $\sum_{i=1}^r \lambda_i u_i$ (up to a renaming of variables) after $m-1$ rounds of simple substitutions each involving only $r$ square substitutions. 

    If the polynomial before round $i$ has a symmetric determinantal representation of size $k$, then by \Cref{prop: det rep}, for arbitrary $d$, after round $i$, one gets a representation of size at most $k(1+2r(i-1)+4r)+1$ for the new polynomial. Note that $k(1+2r(i-1)+4r)+1\leq k(2r(i+2))$. Moreover, if $d=2^{m-1}$, then the determinantal representation of the new polynomial has size at most $k(1+2r)+1$ after round $i$. Note that $k(1+2r)+1\leq k(2+2r)$.

    Given that 
    \begin{align*}
        \sum_{i=1}^r \lambda_i u_i = \det\left(\sum_{i=1}^r u_i [\lambda_i] \right),
    \end{align*}
    the initial polynomial has a determinantal representation of size 1. Therefore, for arbitrary $d$, the polynomial $p$ has a determinantal representation of size at most $2r(m+2)\cdot 2r(m+1) \cdots 2r(3) = 2^{m-1} r^m (m+2)!$. Furthermore, if $d=2^{m-1}$, then $p$ has a determinantal representation of size at most $(2r+2)^{m-1}$ as desired.
\end{proof}

\begin{cor}\label{cor: quadratic SDR}
    Assume $\mathbb{F}$ is either the field of real numbers or the field of complex numbers, and  $q\in \mathbb{F}[x_1,\ldots,x_n]$ is a polynomial of degree 2. Then for some $r\in [n+1]$, the polynomial $q$ can be expressed as the determinant of a symmetric matrix in $\mathbb{F}^{(r+2)\times (r+2)}$, whose diagonal entries are non-zero. 
\end{cor}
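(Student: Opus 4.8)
The plan is to reduce $q$ to a weighted sum of squares of affine forms and then write down an explicit bordered symmetric matrix realizing it, choosing the bordering constant so that every diagonal entry survives.

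First I would apply \Cref{prop: expressing a poly of degree d as a sum of degree d's} with $d=2$. The tensor $T\in S^2(\mathbb{F}^{n+1})$ of \eqref{eq: coeff tensor} is just an $(n+1)\times(n+1)$ symmetric matrix, and over $\mathbb{F}\in\{\mathbb{R},\mathbb{C}\}$ such a matrix of rank $\rho$ has symmetric rank exactly $\rho$ (diagonalize $T=Q\Lambda Q^{T}$, resp.\ write $T$ in a diagonal congruence form over $\mathbb{C}$, and read $T=\sum_{d_i\neq 0}d_i\,q_i^{\otimes 2}$ off the columns of $Q$). Choosing a minimal symmetric decomposition $T=\sum_{i=1}^{r}\lambda_i w_i^{\otimes 2}$ gives $r=\operatorname{rank}T$ with every $\lambda_i\neq 0$; since $\deg q=2$ forces $T\neq 0$, we get $1\le r\le n+1$, i.e.\ $r\in[n+1]$. \Cref{prop: expressing a poly of degree d as a sum of degree d's} then yields affine polynomials $f_1,\dots,f_r\in\mathbb{F}[x_1,\dots,x_n]$ with $q=\sum_{i=1}^{r}\lambda_i f_i^{2}$.

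Next I would exhibit the matrix. Set $\alpha\coloneqq(-1)^{r}\prod_{i=1}^{r}\lambda_i\in\mathbb{F}\setminus\{0\}$ and define the symmetric $(r+2)\times(r+2)$ matrix
\[
M(x_1,\dots,x_n)\coloneqq
\begin{pmatrix}
\alpha & f_1 & \cdots & f_r & \alpha\\
f_1 & -\lambda_1^{-1} & & & 0\\
\vdots & & \ddots & & \vdots\\
f_r & & & -\lambda_r^{-1} & 0\\
\alpha & 0 & \cdots & 0 & \alpha
\end{pmatrix},
\]
whose diagonal entries $\alpha,-\lambda_1^{-1},\dots,-\lambda_r^{-1},\alpha$ are all nonzero. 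To see $\det M=q$: subtracting the last row from the first, then the last column from the first, are determinant-preserving operations that turn $M$ into the block-diagonal matrix $\operatorname{diag}(M',\alpha)$ with $M'=\left(\begin{smallmatrix}0 & \mathbf{f}^{T}\\ \mathbf{f} & \operatorname{diag}(-\lambda_1^{-1},\dots,-\lambda_r^{-1})\end{smallmatrix}\right)$ and $\mathbf{f}=(f_1,\dots,f_r)^{T}$. A Schur complement along the invertible lower-right block of $M'$ gives $\det M'=\bigl(\prod_i(-\lambda_i^{-1})\bigr)\bigl(-\mathbf{f}^{T}\operatorname{diag}(-\lambda_i^{-1})^{-1}\mathbf{f}\bigr)=\tfrac{(-1)^{r}}{\prod_i\lambda_i}\sum_i\lambda_i f_i^{2}$, so $\det M=\alpha\det M'=\sum_i\lambda_i f_i^{2}=q$. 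This is an SDR of $q$ of size $r+2$ with nonzero diagonal, as required.

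The matrix $M$ is precisely the bordered matrix of \Cref{remark: det rep} applied to the size-one representation $\sum_i\lambda_i u_i=\det[\lambda_1 u_1+\cdots+\lambda_r u_r]$ under the square substitutions $u_i\mapsto f_i^{2}$, except that the entries of that construction which are forced to be zero---the $(1,1)$ entry and the last entry of the first row and column---have been ``closed up'' to $\alpha$, which costs nothing in the determinant and clears the diagonal. I do not anticipate a genuine obstacle; the one step requiring real thought, and the reason this beats the bound $2r+2$ coming from \Cref{thm: SDR upperbound}(ii) while also controlling the diagonal, is the choice $\alpha=(-1)^{r}\prod_i\lambda_i$. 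It makes the Schur-complement identity hold verbatim over any field of characteristic $\neq 2$---in particular over $\mathbb{R}$, where a more naive bordering would demand a possibly non-real square root of $(-1)^r\prod_i\lambda_i$---and at the same time keeps every diagonal entry away from $0$. (An alternative, should the explicit matrix prove inconvenient, is to take any SDR of size $2r+2$ from \Cref{thm: SDR upperbound}(ii) and conjugate by a suitable orthogonal matrix to clear the diagonal; but that loses the sharper size $r+2$.)
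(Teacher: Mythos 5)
Your proof is correct, and it follows the same skeleton as the paper's (reduce the quadratic to a weighted sum of squares of affine forms via the rank of the $(n+1)\times(n+1)$ coefficient matrix from \Cref{prop: expressing a poly of degree d as a sum of degree d's}, then realize it by the bordered $(r+2)\times(r+2)$ matrix coming from \Cref{prop: det rep} and \Cref{remark: det rep}), but it differs in how the zero $(1,1)$ entry of that bordered matrix is removed. The paper instead sets $p=q-1$, decomposes $p$, writes down the standard bordered SDR of $p$ (whose corner $(1,1)$ entry is $0$ and whose last diagonal entry is $(-1)^r\lambda_1\cdots\lambda_r$), and then recovers $q=p+1$ by changing that $(1,1)$ entry from $0$ to $1$, using multilinearity of the determinant in the first row together with the fact that the complementary cofactor is exactly $1$ by the choice of the last diagonal entry; this gives \eqref{eq: quadric det} directly. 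You decompose $q$ itself and instead ``close up'' the two corner entries with $\alpha=(-1)^r\prod_i\lambda_i$, verifying via row/column operations and a Schur complement that this corner fill does not change the determinant. Both routes yield a symmetric matrix of size $r+2$ with nonzero diagonal, so either proof is acceptable; your corner-fill argument is self-contained and avoids the slightly ad hoc shift by $1$, while the paper's version requires only a one-line cofactor check on top of the already-established SDR of $p$. The only cosmetic discrepancy is that your $r$ is the rank of the coefficient matrix of $q$ whereas the paper's is that of $q-1$; both lie in $[n+1]$ (your observation that $\deg q=2$ forces the tensor to be nonzero covers this), so the statement is satisfied either way.
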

\begin{proof}
    Let $p=q-1$ be the polynomial \eqref{eq: degree d polynomial} with $d=2$. Then tensor \eqref{eq: coeff tensor} is actually an $(n+1)\times (n+1)$ matrix, and therefore, its symmetric rank $r$ is equal to its rank, which is at most $n+1$. Additionally, in the expression $p=\sum_{i=1}^r \lambda_i f_i^2$ described in \Cref{prop: expressing a poly of degree d as a sum of degree d's}, the coefficients $\lambda_i$ are in fact, the eigenvalues of this matrix.

    Since $p=\sum_{i=1}^r \lambda_i f_i^2$ is obtained from the linear polynomial $\sum_{i=1}^r\lambda_iu_i$ after 1 round of square substitutions $\{u_1\to f_1^2,\ldots,u_r\to f_r^2\}$, by \Cref{prop: det rep} and \Cref{remark: det rep}, 
    \begin{align*}
        p = \det\begin{pmatrix}
            0 & f_1 & \cdots & f_r & 0\\
            f_1 & -\lambda_1^{-1} &&&\\
            \vdots && \ddots &&\\
            f_r &&& -\lambda_r^{-1} &\\
            0 &&&& (-1)^r\lambda_1\cdots \lambda_r
        \end{pmatrix}.
    \end{align*}
    So, one can confirm
    \begin{align}\label{eq: quadric det}
    q = p+1 = 
         \det\begin{pmatrix}
            1 & f_1 & \cdots & f_r & 0\\
            f_1 & -\lambda_1^{-1} &&&\\
            \vdots && \ddots &&\\
            f_r &&&- \lambda_r^{-1} &\\
            0 &&&& (-1)^r\lambda_1\cdots \lambda_r
        \end{pmatrix}.
    \end{align}
\end{proof}
\begin{example}
    \label{ex: quadric sdr}
    Let $f(x,y,z) = \frac{7}{25} x^2 - y^2 - \frac{48}{25} xz - \frac{7}{25} z^2$. Then $f(x,y,z)$ has the following decomposition:
    \begin{align*}
        f(x,y,z) &= (x, y, z) 
        \begin{pmatrix}
            \frac{9}{25}&\frac{12}{25}&-\frac{4}{5}\\
            -\frac{4}{5}&\frac{3}{5}&0\\
            \frac{12}{25}&\frac{16}{25}&\frac{3}{5}
        \end{pmatrix}
        \begin{pmatrix}
            -1&0&0\\
            0&-1&0\\
            0&0&1
        \end{pmatrix}
        \begin{pmatrix}
            \frac{9}{25}&\frac{12}{25}&-\frac{4}{5}\\
            -\frac{4}{5}&\frac{3}{5}&0\\
            \frac{12}{25}&\frac{16}{25}&\frac{3}{5}
        \end{pmatrix}^\top
        \begin{pmatrix}
            x \\
            y \\
            z 
        \end{pmatrix} \\
        &= - \frac{1}{25^2} (9x - 20y + 12z)^2 - \frac{1}{25^2} (12x + 15y + 16z)^2 + \frac{1}{25^2} (-20x + 15z)^2.
    \end{align*}

    Following the construction above for the symmetric determinantal representation,
    \begin{equation}
        \label{eqn: quadric sdr}
        f(x,y,z) = \det \begin{pmatrix}
            0 & \frac{1}{25} (9x - 20y + 12z) & \frac{1}{25} (12x + 15y + 16z) & \frac{1}{25} (-20x + 15z) & \\
            \frac{1}{25} (9x - 20y + 12z) & 1 &&& \\
            \frac{1}{25} (12x + 15y + 16z) && 1 && \\
            \frac{1}{25} (-20x + 15z) &&& -1 & \\
            &&&& -1
        \end{pmatrix}.
    \end{equation}
\end{example}

\subsection{Positive Definiteness of the SDR Matrix}\label{subsec: SDR intersection}

An SDR of a polynomial $p\in \mathbb{R}[x_1,\ldots,x_n]$ expresses it as the determinant of a symmetric matrix $M(x_1,\ldots,x_n)$. In order for $M(x_1,\ldots,x_n)$ to be the concentration matrix of some Gaussian distribution, it needs to be positive (semi-)definite.  In this subsection, we make some observations on whether $M(x_1,\ldots,x_n)$ can be a positive (semi-)definite matrix on a non-empty subset of $\mathbb{R}^n$.
\Cref{lemma: pd diagonal} and \Cref{thm: sylvester's criterion} respectively provide a necessary condition and an equivalent condition for a matrix to be positive (semi-)definite.
\begin{lemma}
    \label{lemma: pd diagonal}
    Let $M$ be a positive (semi-)definite $n \times n$ matrix. Then the diagonal entries, $M_{ii}$, are positive (non-negative) for $i \in [n]$.
\end{lemma}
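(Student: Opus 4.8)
The plan is to test positive (semi-)definiteness against the standard basis vectors. Recall that a symmetric matrix $M \in \mathbb{R}^{n\times n}$ is positive definite precisely when $v^\top M v > 0$ for every nonzero $v \in \mathbb{R}^n$, and positive semi-definite precisely when $v^\top M v \geq 0$ for every $v \in \mathbb{R}^n$. The key observation is that the diagonal entry $M_{ii}$ is exactly the value of this quadratic form on the $i$th standard basis vector, so the claim is a direct specialization of the defining inequality.

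Concretely, I would fix $i \in [n]$ and let $e_i \in \mathbb{R}^n$ be the vector whose $i$th coordinate is $1$ and all other coordinates are $0$. Expanding the matrix product gives $e_i^\top M e_i = \sum_{k,\ell} (e_i)_k \, M_{k\ell} \, (e_i)_\ell = M_{ii}$. If $M$ is positive definite, then since $e_i \neq 0$ we obtain $M_{ii} = e_i^\top M e_i > 0$; if $M$ is only positive semi-definite, the same computation yields $M_{ii} = e_i^\top M e_i \geq 0$. Since $i \in [n]$ was arbitrary, every diagonal entry is positive (respectively non-negative), which is the statement.

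There is no genuine obstacle here: the result is an immediate consequence of the definition. The only step worth recording carefully is the bookkeeping identity $e_i^\top M e_i = M_{ii}$, which follows at once from expanding the product, and the observation that $e_i$ is a legitimate nonzero test vector in the positive definite case.
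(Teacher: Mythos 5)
Your proof is correct and matches the paper's argument exactly: both test the defining quadratic-form inequality on the standard basis vectors $e_i$ and use the identity $e_i^\top M e_i = M_{ii}$. Nothing further is needed.
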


\begin{proof}
    By definition, $\bfx^\top M \bfx > 0$ ($\geq 0$) for all $\bfx \neq 0$. Let $\bfx = e_i$ be the $i$th standard basis vector. Then $e_i^\top M e_i = M_{ii} > 0$ ($\geq 0$).
\end{proof}

\begin{theorem}[Sylvester's Criterion]
    \label{thm: sylvester's criterion}
    A matrix $M$ is positive definite if and only if all leading principal minors of $M$ are positive.
\end{theorem}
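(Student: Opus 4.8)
The plan is to prove the two implications separately. Throughout, $M$ denotes a real symmetric $n \times n$ matrix, and for $k \in [n]$ I write $M_k := M_{[k],[k]}$ for its $k$th leading principal submatrix. The forward direction is immediate, and the reverse direction I would handle by induction on $n$ using Schur complements (as defined above).

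For the forward direction, suppose $M$ is positive definite. For each $k$ and each nonzero $v \in \mathbb{R}^k$, padding $v$ with zeros to a vector $\tilde v \in \mathbb{R}^n$ gives $v^\top M_k v = \tilde v^\top M \tilde v > 0$, so $M_k$ is positive definite. A real symmetric positive definite matrix has only positive eigenvalues, hence positive determinant, so $\det M_k > 0$ for all $k$.

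For the reverse direction I would induct on $n$, the case $n = 1$ being trivial. Assuming the result for size $n - 1$, write
\[
M = \begin{pmatrix} M_{n-1} & b \\ b^\top & c \end{pmatrix}, \qquad b \in \mathbb{R}^{n-1}, \ c \in \mathbb{R}.
\]
The leading principal minors of $M_{n-1}$ are exactly $\det M_1, \ldots, \det M_{n-1}$, all positive by hypothesis, so the inductive hypothesis makes $M_{n-1}$ positive definite, hence invertible. Then the Schur complement $M/M_{n-1} = c - b^\top M_{n-1}^{-1} b$ is defined, and the block-triangularization
\[
\begin{pmatrix} I_{n-1} & 0 \\ -b^\top M_{n-1}^{-1} & 1 \end{pmatrix} M \begin{pmatrix} I_{n-1} & -M_{n-1}^{-1} b \\ 0 & 1 \end{pmatrix} = \begin{pmatrix} M_{n-1} & 0 \\ 0 & M/M_{n-1} \end{pmatrix}
\]
gives both $\det M = \det(M_{n-1}) \cdot (M/M_{n-1})$ (take determinants, noting the outer factors have determinant $1$) and a congruence of $M$ with the block-diagonal matrix on the right. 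From $\det M > 0$ and $\det M_{n-1} > 0$ we get $M/M_{n-1} > 0$, so that block-diagonal matrix has positive definite diagonal blocks; since congruence preserves positive definiteness ($\tilde v^\top (P^\top M P)\tilde v = (P\tilde v)^\top M (P\tilde v) > 0$ for invertible $P$ and $\tilde v \neq 0$), $M$ is positive definite.

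This is a classical fact, so I do not expect a genuine obstacle; the only points requiring a moment's care are (i) matching the leading principal minors of $M_{n-1}$ with the first $n-1$ leading principal minors of $M$ so that the inductive hypothesis applies, and (ii) invoking the identity $\det M = \det(M_{n-1})(M/M_{n-1})$ for the Schur complement together with the congruence-invariance of positive definiteness.
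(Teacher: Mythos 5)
Your proof is correct. Note that the paper does not prove this statement at all: it records Sylvester's criterion as a classical fact (used only to discuss when an SDR matrix can be positive definite), so there is no proof of record to compare against. Your argument — the forward direction by restricting the quadratic form to coordinate subspaces, the reverse direction by induction on $n$ via the Schur complement $M/M_{n-1}$, the determinant identity $\det M = \det(M_{n-1})\,(M/M_{n-1})$, and congruence-invariance of positive definiteness — is the standard textbook proof, and it meshes naturally with the Schur-complement machinery the paper already uses elsewhere (e.g.\ in Lemma \ref{lemma: zero block inverse} and Proposition \ref{prop: det rep}). The only point to flag is that you (correctly) assume $M$ is real symmetric; the statement as written leaves this implicit, but that is the intended reading in the paper's context of symmetric SDR matrices, and without symmetry the criterion is false, so your hypothesis is the right one.
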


Let $p\in \mathbb{R}[x_1,\ldots,x_n]$ be a polynomial. Our observations are the following:
\begin{enumerate}
    \item If the polynomial $p$ is negative on $\mathbb{R}^n$, then its SDR matrix $M(x_1,\ldots,x_n)$ can never be positive (semi-)definite since for all $(x_1,\ldots,x_n)\in \mathbb{R}^n$, we have $\det\left( M(x_1,\ldots,x_n)\right)=p(x_1,\ldots,x_n)<0$.
    \item Assume $p =q+c$, where $q$ is a 
    quadratic polynomial with no constant term, $c\in \mathbb{R}$, and $q\leq 0$ everywhere. If $p$ is sometimes positive on $\mathbb{R}^n$, then the SDR matrix of $p$ can be chosen such that its diagonal entries as well as its determinant are positive on a non-empty subset of $\mathbb{R}^n$. One can obtain an SDR matrix for $p$ by following the construction in \Cref{cor: quadratic SDR}. Note that in this case, for any $(x_1,\ldots,x_n)\in \mathbb{R}^n$, $q(x_1,\ldots,x_n) = [\begin{array}{cc} \mathbf{x} & 1\end{array}]\   T \  [\begin{array}{cc} \mathbf{x} & 1\end{array}]^T$, where $\mathbf{x} \coloneqq [\begin{array}{ccc} x_1 & \cdots & x_n \end{array}]$ and $T$ is the coefficient tensor (here matrix) of $q$ with the form
    \begin{align} \label{eq: coeff tensor for quadratics}
        T = \begin{pmatrix}
            M & \mathbf{b}\\
            \mathbf{b}^T & 0
        \end{pmatrix}
    \end{align}
    with $M\in \mathbb{R}^{n\times n}$ and $\mathbf{b}\in \mathbb{R}^n$.  We will show that all of the non-zero eigenvalues of $T$ are negative in this case, and therefore, all the diagonal entries in~\eqref{eq: quadric det} are positive. For eigenvalue $\lambda_i$, $i \in [r]$, assume $\mathbf{v_i}= [\begin{array}{cc} \mathbf{x} & x_{n+1}\end{array}]^T$ is its corresponding eigenvector. If $x_{n+1}$ can chosen to be 1, then we have
    \begin{align*}
        0\geq q(x_1,\ldots,x_n) = \mathbf{v_i}^T \  T \  \mathbf{v_i} = \lambda_i ||\mathbf{v_i}||^2,
    \end{align*}
    which implies that $\lambda_i\leq 0$. If $x_{n+1}=0$, then we have
    \begin{align*}
       \begin{pmatrix} M\mathbf{x}^T  \\ \mathbf{b}^T\mathbf{x}^T\end{pmatrix} =T \mathbf{v_i} = \lambda_i \mathbf{v_i} = \begin{pmatrix} \lambda_i\mathbf{x}^T \\ 0\end{pmatrix},
    \end{align*}
    which implies that
    \begin{align*}
        0\geq q(x_1,\ldots,x_n) = \begin{pmatrix} \mathbf{x} & 1\end{pmatrix} \ T\ \begin{pmatrix} \mathbf{x}^T \\ 1\end{pmatrix} = \mathbf{x}\ M \ \mathbf{x}^T + 2\ \mathbf{b}^T \mathbf{x}^T = \lambda_i ||\mathbf{x}||^2,
    \end{align*}
    and thus, $\lambda_i\leq 0$ in this case as well. 
\end{enumerate}

We end this subsection with our last observation, which we state as a proposition due to its significance:
\begin{prop}~\label{prop: degree not a power of 2}
    If $p$ is of degree $d$, where $d$ is not a power of 2, the construction in Theorem~\ref{thm: SDR upperbound} entails at least one constant as well as its negative on the diagonal of $M(x_1,\ldots,x_n)$. In this case, $M(x_1,\ldots,x_n)$ can never be a positive definite matrix. 
\end{prop}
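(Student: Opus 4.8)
The plan is to trace the recursive construction in the proof of \Cref{thm: SDR upperbound} and exhibit a single nonzero constant $c$ that appears on the diagonal of the final matrix $M(x_1,\ldots,x_n)$ together with $-c$. Once that is done, one of $c$ and $-c$ is a negative real number sitting on the diagonal for every value of the variables, so \Cref{lemma: pd diagonal} immediately gives that $M(x_1,\ldots,x_n)$ is never positive definite.

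First I would observe that since $d$ is not a power of $2$, the binary decomposition $d=\sum_{\ell=1}^{M}2^{p_{\ell}}$ used in \Cref{prop: simple substitutions} has $M\geq 2$, so the construction invoked in \Cref{thm: SDR upperbound} is the one for arbitrary $d$, applied to $\sum_{i=1}^{r}\lambda_i y_i^{d}$ with all $\lambda_i\neq 0$ (here $r$ is the symmetric rank, and $r\geq 1$ because $\deg p=d\geq 3$). Reading \eqref{eq: simple substitutions} with $i=j=1$ and $M\geq 2$, round $1$ of the procedure producing each $y_i^{d}$ from a single variable consists of exactly one product substitution and no square substitution. Hence the first round of the construction is a round of product substitutions applied to the determinantal representation of size $1$ given by $\sum_{i=1}^{r}\lambda_i u_i=\det\bigl(\sum_{i=1}^{r}u_i[\lambda_i]\bigr)$, whose coefficient matrices $C_i=[\lambda_i]$ are nonzero scalars.

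Next I would apply \Cref{prop: det rep} to this first round. In the matrix $M$ built there, the block $\diag D_2$ lies on the diagonal, and $D_2$ contains, for each product-substituted variable, the sub-blocks $-(C_i-\gamma_i I_k)^{-1}$ and $+(C_i-\gamma_i I_k)^{-1}$ (and, unless one invokes \Cref{remark: det rep}, also $-\gamma_i^{-1}I_k$ and $+\gamma_i^{-1}I_k$). Since $k=1$ and $\gamma_i$ is chosen not to be an eigenvalue of $C_i=[\lambda_i]$, these sub-blocks are the nonzero scalars $\mp(\lambda_i-\gamma_i)^{-1}$; taking $c:=(\lambda_i-\gamma_i)^{-1}\neq 0$, both $c$ and $-c$ appear on the diagonal of the round-$1$ matrix. (If instead one uses the shortened construction of \Cref{remark: det rep}, then $\gamma_i=0$ and one gets $\mp\lambda_i^{-1}$, still a nonzero $\pm c$ pair.)

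Finally I would show this pair survives to the end and conclude. Passing from the matrix of one round to that of the next, \Cref{prop: det rep} sets to $0$ the variables about to be substituted in the current matrix --- which changes only non-constant entries --- and then appends new rows and columns; so the current matrix, with those variables zeroed, is the top-left block of the next one. Therefore every constant diagonal entry of the round-$1$ matrix remains a constant diagonal entry of the final $M(x_1,\ldots,x_n)$, and in particular so do $c$ and $-c$. As $c\neq 0$, one of $c$ and $-c$ is negative and lies on the diagonal of $M(x_1,\ldots,x_n)$ for all $x_1,\ldots,x_n$, so \Cref{lemma: pd diagonal} finishes the proof. The step that needs the most care is this last persistence argument: it relies on the exact shape of the matrices in \Cref{prop: det rep}, namely that each iteration only embeds the previous matrix (after harmlessly zeroing the substituted variables) as a principal block and enlarges it, never rewriting an existing entry. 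This is bookkeeping rather than a genuine difficulty, but it is where the argument would fail if the construction modified old entries.
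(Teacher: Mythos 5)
Your proposal is correct and matches the argument the paper intends but leaves implicit (the proposition is stated without a written proof): since $d$ is not a power of $2$, the substitution scheme of \Cref{prop: simple substitutions} forces at least one product substitution, and the block $\diag D_2$ in \Cref{prop: det rep} then places a nonzero constant and its negative on the diagonal, which persist as principal-block entries through later rounds, so \Cref{lemma: pd diagonal} rules out positive definiteness. Your explicit persistence bookkeeping is exactly the missing detail and is sound.
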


Proposition~\ref{prop: degree not a power of 2} implies that the construction proposed in \Cref{thm: SDR upperbound} can only result in a potentially positive definite SDR matrix $M(x_1,\ldots,x_n)$ for a polynomial $p$ when the degree of $p$ is a power of 2.

\subsection*{Discussion}
In Section \ref{sec: spanning tree models}, we showed that the spanning tree generating function, $P_G$, of a graph $G$ is homaloidal if $G$ is chordal, and is not homaloidal when $G$ is a cycle. Computations further suggest that this polynomial is not homaloidal when $G$ is not a chordal graph in general. In \Cref{sec: examples of homaloidals} we studied homaloidal polynomials given by Dolgachev's classification and another construction. \Cref{sec: SDR for homaloidals} provides an algorithm to compute a relatively small SDR for any polynomial. However, these SDRs are affine linear and may not always intersect the cone of positive definite matrices. It would be interesting to see which of the polynomials from \Cref{sec: examples of homaloidals} arise as a polynomial $P_G$ for some graph $G$. 

\subsection*{Acknowledgements} Many thanks to our group mentor, Serkan Ho\c{s}ten, and to the other Apprenticeship Week organizers, Bernd Sturmfels and Kaie Kubjas. Special thanks also to Tobias Boege, Jane Coons, Igor Dolgachev, Joseph Johnson, Elina Robeva, Frank R\"ottger, and Piotr Zwiernik for helpful conversations. Part of this research was performed while the authors were visiting the Institute for Mathematical and Statistical Innovation (IMSI), which is supported by the National Science Foundation (Grant No. DMS-1929348). 
The first author was supported by the National Science Foundation Graduate Research Fellowship under Grant No. DGE-1841052, and by the National Science Foundation under Grant No. 1855135 during the writing of this paper.
The second author received funding from the European Research Council (ERC) under the European Union’s Horizon 2020 research and innovation programme (grant agreement No. 883818). 
The third author was supported by the NSERC Discovery Grant DGECR-2020-00338.

\bibliographystyle{alpha}
\bibliography{main.bib}

\end{document}